\theoremstyle{plain}
\newtheorem{theorem}{Theorem}[section]
\newtheorem{lemma}[theorem]{Lemma}
\newtheorem{corollary}[theorem]{Corollary}
\newtheorem{proposition}[theorem]{Proposition}
\theoremstyle{definition}
\newcommand {\absleq} {{\leq_{|\, \cdot\, |}\, }}
\def\tD {{\tilde{\mathcal D}}}
\def\lbeq(#1){\label{eqn:#1}}
\def\refeq(#1){{\rm (\ref{eqn:#1})}}
\def\lbth(#1){\label{th:#1}}
\def\refth(#1){{\rm Theorem \ref{th:#1}}}
\def\refthb(#1){{\bf Theorem \ref{th:#1}}}
\def\lblm(#1){\label{lm:#1}}
\def\reflm(#1){{\rm Lemma \ref{lm:#1}}}
\def\reflms(#1,#2){{\rm Lemmas \ref{lm:#1} and \ref{lm:#2}}}
\def\reflmss(#1,#2,#3){{\rm Lemmas \ref{lm:#1}, \ref{lm:#2} and \ref{lm:#3}}}
\def\reflmsss(#1,#2,#3,#4){{\rm Lemmas \ref{lm:#1},\, \ref{lm:#2},\, \ref{lm:#3} and \ref{lm:#4}}}
\def\reflmb(#1){{\bf Lemma \ref{lm:#1}}}
\def\lbprp(#1){\label{prp:#1}}
\def\refprp(#1){{\rm Proposition \ref{prp:#1}}}
\def\refprpb(#1){{\bf Proposition \ref{prp:#1}}}
\def\lbcor(#1){\label{cor:#1}}
\def\refcor(#1){{\rm Corollary \ref{cor:#1}}}
\def\refcors(#1,#2){{\rm Corollaries \ref{cor:#1} and \ref{cor:#2}}}
\def\lbrm(#1){\label{rm:#1}}
\def\refrm(#1){{\rm Remark \ref{rm:#1}}}
\def\lbass(#1){\label{ass:#1}}
\def\refass(#1){{\rm Assumption \ref{ass:#1}}}
\def\lbdf(#1){\label{df:#1}}
\def\refdf(#1){{\rm Definition \ref{df:#1}}}
\def\lbsec(#1){\label{s:#1}}
\def\refsec(#1){{\rm \S\ref{s:#1}}}
\def\lbsubsec(#1){\label{ss:#1}}
\def\refsubsec(#1){{\rm \S\ref{ss:#1}}}
\def\tGa{\widetilde{\Gamma}}
\def\Gg{{\mathcal G}}
\newcommand{\ii}{\mathrm{i}}
 \newcommand{\ud}{\mathrm{d}}
\newcommand{\lam}{\lambda}
\def\ab{{\bf a}}
\def\bb{{\bf b}}
\def\Bb{{\bf B}}
\def\eb{{\bf e}}
\def\fb{{\bf f}}
\def\ph{{\varphi}}
\def\bqn{\begin{equation}}
\def\eqn{\end{equation}}
\def\C{{\mathbb C}}
\def\N{{\mathbb N}}
\def\Z{{\mathbb Z}}
 \def\Cb{{\overline{\mathbb C}}}
\def\R{{\mathbb R}}
\def\a{\alpha}
\def\c{\gamma}
\def\Ga{\Gamma}
\def\d{\delta}
\def\Dg{{\mathcal D}}
\def\Eg{{\mathcal E}}
\def\Sg{{\mathcal S}}
\def\p{\psi}
\def\ep{\varepsilon}
\def\th{\theta}
\def\m{\mu}
\def\r{\rho}
\def\s{\sigma}
\def\w{\omega}
\def\W{\Omega}
\def\Hg {{\mathcal H}}
\def\la{\langle}
\def\ra{\rangle}
\def\lap{\Delta}
\def\ax{{\la x \ra}}
\def\pa{{\partial}} 
\begin{document}

\title[Two dimensional point interactions]
{Two dimensional Schr\"odinger operators with point interactions: 
threshold expansions, zero modes and $L^p$-boundedness of wave operators}

\author[H.D.~Cornean]{Horia D. Cornean}
\address[H.D.~Cornean]{Department of Mathematical Sciences, Aalborg University \\ Skjernvej 4A, 9220 Aalborg (Denmark).}
\email{cornean@math.aau.dk}

\author[A.~Michelangeli]{Alessandro Michelangeli}
\address[A.~Michelangeli]{International School for Advanced Studies -- 
SISSA \\ via Bonomea 265 \\ 34136 Trieste (Italy).}
\email{alemiche@sissa.it}

\author[K.~Yajima]{Kenji Yajima}
\address[K.~Yajima]{Department of Mathematics \\ Gakushuin University 
\\ 1-5-1 Mejiro \\ Toshima-ku \\ Tokyo 171-8588 (Japan).}
\email{kenji.yajima@gakushuin.ac.jp}

\begin{abstract}
We study the { threshold behaviour} of two dimensional Schr\"odinger operators with 
{ finitely many} local point interactions. We show that the { resolvent can  
either be continuously extended} up to the threshold, 
in which case we say { that} the operator is of regular type, 
or it has singularities associated with { 
$s$ or $p$-wave resonances or even with an embedded eigenvalue at zero, for whose existence we give necessary and sufficient conditions. An embedded eigenvalue at zero may appear only if we have at least three centres.}
 
{ When} the operator is of regular type we 
prove that the wave operators are bounded in $L^p(\R^2)$ 
for all $1<p<\infty$. 
{With a single center we always are in the  regular type case. }
\end{abstract}

\date{\today}

\maketitle

\section{Introduction and main results}\label{sec:theorems} 

Let $Y=\{y_1,\dots,y_N\}$ be $N$ points in the plane $\R^2$ { with  $1\leq N<\infty$. Let} $T_0$ be the densely defined non-negative symmetric operator in the 
Hilbert space $L^2(\R^2)$ defined by 
\[
T_0 \colon = -\lap\Big\vert_{C_0^\infty(\R^2 \setminus Y )} , 
\quad \lap = \pa^2/\pa x_1^2 + \pa^2/\pa x_2^2,\quad {x=(x_1,x_2)\in \R^2}.
\]
A Schr\"odinger operator on $\R^2$ with point interactions at $Y$ is 
any selfadjoint extension of $T_0$. In this paper we are concerned 
with  local point interactions at $Y$ 
which are parametrized by { the interaction strengths} 
$\alpha=(\alpha_1,\dots,\alpha_N)\in \R^N $. { The corresponding operators are denoted by $H_{\a,Y}$ 
and} are defined via the resolvent equation: 
\begin{equation}\lbeq(eq:resolvent_identity)
(H_{\a,Y} -z^2)^{-1} -(H_0-z^2)^{-1} \;=\; \sum_{j,k=1}^N 
\{\Gamma_{\alpha,Y}(z)\}^{-1}_{jk} \,\mathcal{G}_{z}(\cdot-y_j)
\otimes \overline{\mathcal{G}_{z}(\cdot-y_k)},
\end{equation} 
where $z\in \C^{+}= \{z\in \C \colon \Im z>0\}${; more details on the right hand side of \eqref{eqn:eq:resolvent_identity} are given next}.   
$\mathcal{G}_z(x)$ 
is the convolution kernel of $(-\lap -z^2)^{-1}$ in $L^2(\R^2)$:
\begin{equation}\lbeq(1-1)
\mathcal{G}_z(x)
= \frac1{(2\pi)^2}\int_{\R^2}\frac{e^{ix\xi}d\xi}{\xi^2-z^2} 
\end{equation}
and, in terms of the Hankel function of the first kind (see e.g. (10.8.2) of \cite{DLMF}), 
\bqn 
\mathcal{G}_z(x)= \frac{i}4 H_0^{(1)}(z|x|)
\eqn 
where 
\begin{multline} \lbeq(hankel)
\frac{i}4 H_0^{(1)}(z)= \Big(-\frac1{2\pi}\log\Big(\frac{z}{2}\Big)+ \frac{i}4 -
\frac{\gamma}{2\pi}\Big) J_0(z) \\
-\frac1{2\pi}\left(\frac{\frac14 z^2}{(1!)^2}
-\Big(1+\frac12\Big)\frac{(\frac14 z^2)^2}{(2!)^2} + \Big(1+\frac12+\frac13\Big)
\frac{(\frac14 z^2)^3}{(3!)^2} - \cdots \right),
\end{multline}
and $J_0(z)$ is the $0$-th order Bessel function 
\bqn  \lbeq(bessel)
J_0(z) = 
\sum_{k=0}^\infty \frac{(-1)^k}{(k!)^2}\Big(\frac{z^2}{4}\Big)^k. 
\eqn 
$\Gamma_{\a,Y}(z)$ is the $N \times N$ matrix whose $(j,k)$-entry is the 
function of $z\in \Cb^{+}\setminus \{0\}$ given by 
\begin{equation}\lbeq(ga-def)
\Gamma_{\alpha,Y}(z)_{jk}= 
\Big(\alpha_j+ \frac1{2\pi}\log \Big(\frac{z}{2}\Big) - \frac{i}{4} 
+ \frac{{ \gamma}}{2\pi}\Big)\delta_{jk} - 
\mathcal{G}_z(y_j-y_k)\hat{\delta}_{jk},
\end{equation}
where $\d_{jk}$ is the Kronecker delta, 
$\hat{\d}_{jk}=1-\d_{jk}$ and { $\gamma$} the Euler constant. 

The following { facts are} well known (see \cite{albeverio-solvable}, pp. 163-165).
\begin{enumerate} 
\item[{(1)}]  The equation 
\refeq(eq:resolvent_identity) defines a unique selfadjoint operator 
$H_{\a,Y}$ in $L^2(\R^2)$ with domain  
\bqn \lbeq(dom)
\Big\{
u(x)=v(x)+ \sum_{j,k=1}^N [\Gamma_{\alpha,Y}(z)^{-1}]_{jk} v(y_k)
\mathcal{G}_{z}(x-y_j)
\colon  v \in H^2(\R^2)\Big\}
\eqn   
which is independent of $z \in \C^{+}$ { whenever $\Gamma_{\alpha,Y}(z)^{-1}$ exists}. 
\item[{(2)}] Given $z$, the function $v \in H^2(\R^2)$ of 
\refeq(dom) is uniquely determined by $u\in D(H_{\a,Y})$ and  
\[
(H_{\a,Y}-z^2)u = (-\lap - z^2) v.
\]
\item[{(3)}] $H_{\a,Y}$ is a real local operator, viz. $H_{\a,Y}u$ is real if $u$ is real, 
and  {if $u=0$ in an open set $U$, then $H_{\a,Y}u=0$ in $U$}. 
\item[{(4)}] The spectrum of $H_{\a,Y}$ consists of an absolutely 
continuous  part $[0,\infty)$ denoted  in short with AC, 
and at most $N$ non-positive eigenvalues.  
Positive eigenvalues and singular continuous spectrum are absent. 
\item[{(5)}] $H_{\a,Y}$ is a rank $N$ perturbation of $-\lap$ and, by virtue of  
Kato-Birman-Rosenblum theorem (\cite{Kato}), the wave operators $W_{\pm}$ 
defined as the strong limits in $L^2(\R^2)$,
\bqn \lbeq(limit)
W_{\pm}= \lim_{{ t\to\pm \infty}} e^{itH_{\a,Y} }e^{-itH_0} 
\eqn 
exist and are complete in the sense that ${\rm Range}\ W_\pm =L^2_{ac}(H_{\a,Y})$, 
the AC subspace of $L^2(\R^2)$ for $H_{\a,Y}$.  Hence 
\[
W^\ast_\pm W_\pm = 1, \quad W_\pm W_\pm^\ast = P_{ac}(H_{\a,Y}), 
\]
where $P_{ac}(H_{\a,Y})$ is the orthogonal projection onto $L^2_{ac}(H_{\a,Y})$. 
The wave operators satisfy the intertwining property  
\bqn \lbeq(inter) 
f(H_{\a,Y})P_{ac}(H_{\a,Y})= W_\pm f(H_0) W_\pm 
\eqn 
for any Borel function $f$ on $\R$. 
\end{enumerate}

The Hankel function has the following integral representation 
\begin{equation} \lbeq(Hank)
\mathcal{G}_z(x) = (i/4)H_0^{(1)}(z|x|)  
= \frac{e^{iz|x|}}{2^{\frac32}\pi} 
\int_0^\infty e^{-t}t^{-\frac12}\left(\frac{t}2-iz|x|\right)^{-\frac12}dt, 
\end{equation}
see formula (3) { on} page 168 of Watson(\cite{Watson}). From \refeq(Hank), we see that 
for any $c>0$, 
\bqn \lbeq(large-hankel) 
\mathcal{G}_z(x)= e^{iz|x|}\w(z|x|), \quad 
|\pa^\a_{\lam} \w(\lam)| \leq C_\a  \la \lam \ra^{-\frac12-|\a|}, 
\quad \lam \in \R, \  |\lam |\geq c,
\eqn 
viz. $(1-\chi(\lam))\w \in S^{-\frac12}(\R)$ (i.e. the space of one dimensional symbols of order $-1/2$)  
whenever $\chi\in C_0^\infty(\R)$ is such that $\chi(\lam)=1$ near { $\lam=0$. Also, for purely imaginary $z=ia \in \C^{+}$ with $a>0$ we have that  
$\mathcal{G}_{ia}(x)$ is positive}, hence 
$\Ga_{\alpha,Y}(ia)$ is real and symmetric.  

If $\s\in \R$, let $L^2_{\s}=L^2_{\s}(\R^2, \ax^{2\s}dx)$ be 
the weighted $L^2$ space 
and $\Bb_\s= \Bb(L^2_\s, L^2_{-\s})$ the Banach space of bounded operators from $L^2_\s$ 
to $L^2_{-\s}$.  

{ Let $\Eg\subset i[0,\infty)$ denote the finite set of 
square roots of negative eigenvalues of $H_{\a,Y}$. Let $\s>1/2$ and $z\in \C^{+}\setminus \Eg$}. The celebrated Agmon-Kuroda theory 
\cite{Ag,Ku} of limiting absorption principle for $(-\lap -z^2)^{-1}$ and the properties 
of the Hankel function \eqref{eqn:hankel}, \eqref{eqn:large-hankel}, { imply that the} $\Bb_\s$-valued analytic function 
$(H_{\a, Y}-z^2)^{-1}$ admits a  
boundary value $(H_{\a, Y}-\lam^2)^{-1}$  { for $\lambda\in \R \setminus \{0\}$  which is} locally H\"older continuous. However, 
it can be singular at $\lam=0$. We shall show 
that $(H_{\a, Y}-\lam^2)^{-1}$ can either be continuously extended to the whole closed 
half plane $\Cb^{+}$, in which case we say $H_{\a,Y}$ is of regular type, or it has singularities 
of one of the three kinds associated with resonances 
of $s$-wave or $p$-wave types or zero energy eigenvalue. 
In the regular case we then show that the wave operators are bounded in $L^p(\R^2)$ for all $1<p<\infty$.  
We write $\lam$ instead of $z$ when we want to emphasize that $\lam$ is in 
$\Cb^{+}\setminus \{0\}$ not only in $\C^{+}$. 

For stating { our main results we need some more notation}. We introduce the vectors  
$$
\widehat{\Gg}_{\lam,Y}(x) 
= \begin{pmatrix} \Gg_\lam (x-y_1) \\ 
\vdots \\ 
\Gg_\lam (x-y_N) \end{pmatrix} , \quad 
\widehat{G}_{0,Y}(x) = \begin{pmatrix} G_{0}(x-y_1)  \\ 
\vdots \\ 
G_0(x-y_N) \end{pmatrix}, 
$$
where $G_{0}(x)$ is the Green function of the 2-dimensional $-\lap$:
\[
G_0(x) = -\frac1{2\pi} \log |x|, \quad (-\lap)^{-1} u(x)= \int_{\R^2} G_0(x-y) u(y) dy, 
\]
so that the right hand side of \refeq(eq:resolvent_identity) may be expressed as 
\bqn  \lbeq(prob-1)
D(\lam, x,y)  = 
\la \widehat{\Gg}_{\lam,Y}(x), \Gamma_{\alpha,Y}(\lam)^{-1} \widehat{\Gg}_{\lam,Y}(y) \ra, 
\eqn 
where $\la \ab , \bb \ra= \sum a_j b_j$ (without complex conjugation). Also:
\begin{align}\label{horia1}
\eb =  
\frac{1}{\sqrt{N}}{\bf \hat{1}}, \quad  
{\bf \hat{1}}= \begin{pmatrix} 1\\  \vdots \\ 1 
\end{pmatrix}, \quad 
P = \eb \otimes \eb, \quad S=1-P.
\end{align}
{ Moreover,} $\tD=\Dg(\a,Y)$ and $\Gg_1(Y)$ are $N \times N$ real symmetric matrices given by 
\bqn \lbeq(deftD)
\tD= \Big(\d_{jk}\a_j + \frac{\hat{\d}_{jk}}{2\pi}\log|y_j-y_k|\Big), \quad 
\Gg_1(Y)= - \Big(\frac{\hat\d_{jk}}{4N}|y_j-y_k|^2\Big).
\eqn 
For an integral operator $K$, 
 { we denote by $K(x,y)$} its integral kernel and we often identify $K$ { with} $K(x,y)$. 
We { will} use the function 
\bqn \lbeq(g)
g(\lam)
= - \frac1{2\pi}\log\Big(\frac{\lam}{2}\Big)+ \frac{i}4 - \frac{\gamma}{2\pi}
\eqn 
which appears in front of $J_0(z)$ in \refeq(hankel) as one of 
the scales for the asymptotic expansions as $\lam \to 0$, the 
other being $\lam$. We have for small 
$|\lam||x|$ that 
\bqn \lbeq(G0g)
\Gg_\lam(x) = g(\lam) + G_0(x) + O(\lam^2|x|^2 g(\lam |x|)).
\eqn 

The representation of any point 
$x\in\mathbb{R}^2$ in polar coordinates will be $x=r\omega$, 
where $r\equiv|x|\geqslant 0$ and $\omega\in\mathbb{S}^1$.
For $u, v\in L^2(\mathbb{R}^2)$, 
$u\otimes v$ denotes the rank-1 operator 
$f \mapsto u\langle v, f\rangle$, where $\langle\cdot,\cdot\rangle$ 
is the usual scalar product in $L^2(\mathbb{R}^2)$, 
anti-linear in the first entry and linear in the second. 
The notation $\langle f, g\rangle$ will { also} be used whenever 
the dual product is meaningful, say for $f \in \Sg$ and $g\in \Sg'$.
For the Fourier transform in $\mathbb{R}^d$ we use the convention
\[
(\mathcal{F}f)(\xi)\;\equiv\;\widehat{f}(\xi)\;
=\;\frac{1}{\;(2\pi)^{d/2}}
\int_{\mathbb{R}^d}e^{-\ii x\xi}f(x)\,\ud x\,.
\]
We often write $f\leqslant_{|\,\cdot\,|} g$ when $|f|\leqslant |g|$. 
When not specified otherwise, $C$ denotes a universal positive 
constant and ${1}$ is the 
identity operator on the space that is clear from the context. 
Since the centres $Y$ and the strengths $\a$ will be fixed throughout the 
paper, we shall often omit them from the notation whenever we think no confusion { can occur}.

{ Here is our first main result.} 
\begin{theorem}\lbth(zerolimit) 
Let $\s>1$. Then, 
as a $\Bb_\s$-valued function of $\lam \in \Cb^{+}\setminus\{0\}$, { the resolvent} 
$(H_{Y,\a}-\lam^2)^{-1}$ satisfies the following properties: 
\begin{enumerate}
\item[{\rm (1)}] { The linear map $S{\tD} S$ in $S\C^N$ is 
non-singular if and only if } 
$(H_{Y,\a}-\lam^2)^{-1}$ 
can be extended to a continuous function on $\Cb^{+}$ and 
\begin{align*} 
& (H_{Y,\a}-\lam^2)^{-1}(x,y)\\ 
& = G_0(x-y)-N^{-1}
\big(\la \hat{G}_{0,Y}(x), \hat{\bf 1} \ra + 
\la \hat{\bf 1}, \hat{G}_{0,Y}(y) \ra \big) 
-{N^{-2}} \la \hat{\bf 1}, \tD  \hat{\bf 1} \ra \notag \\ 
& + 
\Big\la [S{\tD}S]^{-1}S \big(\hat{G}_{0,Y}(x)- N^{-1}\tD\hat{\bf 1}\big), 
S \big(\hat{G}_{0,Y}(y)- N^{-1}\tD\hat{\bf 1}\big) \Big\ra + O(g(\lam)^{-1})
\end{align*}
where $O(g(\lam)^{-1})$ satisfies $\|O(g(\lam)^{-1})\|_{\Bb_\s}\leq C|g(\lam)^{-1}|$ 
as $\lam \to 0$.  

\item[{\rm (2)}] Suppose that ${\rm Ker}_{S\C^N}\, S{\tD}S\not=0$ and 
let $T$ be the orthogonal projection in $S\C^N$ onto ${\rm Ker}_{S\C^N}\, S{\tD}S$. Then ${\rm rank}\, T\tD^2 T \leq 1$. Moreover,   
\begin{enumerate}
\item[{\rm (a)}] If $T\tD^2 T$ is non-singular in $T\C^N$, 
then ${\rm rank}\, T = 1$ and, if we write 
$T= \fb \otimes \fb$, $\fb={}^t (f_1, \dots, f_N)$ and 
$\la \fb , {\tD}^2 \fb\ra = \c_0^{-2}$, $\c_0>0$, then 
\bqn  
(H_{Y,\a}-\lam^2)^{-1}  = \c_0^{-2} g(\lam)\ph \otimes \ph + O(1)  \quad (\lam \to 0), 
\lbeq(singular-1)
\eqn
where $\ph(x)\in \R$ satisfies 
$-\lap\ph(x) = \sum_{j=1}^N f_j \d(x-y_j)$ and as $|x| \to \infty$ 
\bqn %egin{align} 
\ph(x) = 
\Big\la \fb, \hat{G}_{0,Y}(x)- \frac{\tD\hat{\bf 1}}{N} \Big\ra 
= b+ \frac{a_1x_1+ a_2 x_2 }{|x|^2} + O\Big(\frac1{|x|^2}\Big)  
\lbeq(s-wave)
\eqn% nd{align}  
where $a_1, a_2$ are real constants and   
\bqn 
b = - {N^{-1}}\la \fb, \tD \hat{\bf 1} \ra \not=0 . \lbeq(bnot=0)
\eqn
\item[{\rm (b)}] If $T\tD^2 T$ is singular in $T\C^N$, 
then $T \tD^2 T=0$ and $T\tD= \tD T =0$. 
If $T \Gg_1(Y)T$ is non-singular in $T\C^N$, then   
\begin{align}  
& (H_{Y,\a}-\lam^2)^{-1}(x,y)\notag \\
& = -  (N g\lam^{2})^{-1} \la T \hat{G}_{0,Y}(x), 
[T\Gg_1(Y) T]^{-1} T \hat{G}_{0,Y}(y)\ra +  O(\lam^{-2} ). \notag  \\
 & =  -(Ng\lam^{2})^{-1} \sum_{j=1}^n  \ph_j(x) \ph_j(y)  + O(\lam^{-2} ) \quad (\lam \to 0), 
\lbeq(S2)
\end{align}
where $n= {\rm rank}\, T$, $(a_{j1}, a_{j,2})\not=0$, $j=1, \dots, n$  
are real constants and, as $|x| \to \infty$ 
\bqn
\ph_j(x)= \frac{a_{j1}x_1+ a_{j2} x_2 }{|x|^2} + O(|x|^{-2}). \lbeq(S2aa)
\eqn 
\item[{\rm (c)}] { Both above operators $T\tD^2 T$ and $T \Gg_1(Y)T$ are singular in $T\C^N$ if and only 
if $H_{\alpha,Y}$ has an  eigenvalue at zero. 
More precisely, let} $T_1$ denote the orthogonal projection onto 
${\rm Ker}\, T \Gg_1(Y)T$ and denote by:
\bqn  \label{horia3} 
\Gg_2(Y)= - \left(\frac{\hat\d_{jk}}{8{\pi}N}|y_j-y_k|^2 
\log\Big(\frac{e}{|y_j-y_k|}\Big) \right)\,. 
\eqn 
Then $T_1 {\Gg}_2(Y) T_1$ is non-singular in $T_1\C^N$ and 
\begin{align}  \label{horia10}
& (H_{Y,\a}-\lam^2)^{-1}(x,y)\notag \\
& = -  (N \lam^{2})^{-1} \la T_1 \hat{G}_{0,Y}(x), 
[T_1\Gg_2(Y) T_1]^{-1} T_1 \hat{G}_{0,Y}(y)\ra +  
O(\lam^{-2}g^{-1} ) \\
& = -(N\lam^{2})^{-1} \sum_{j=1}^m \p_j(x) \p_j(y)  
+ O(\lam^{-2}g^{-1} ), \quad \lam \to 0, 
\end{align}
where $m={\rm rank }\, T_1 {\Gg}_2(Y) T_1$ and $\p_1, \dots, \p_m$ are 
zero energy eigenfunctions of $H_{\a,Y}$. 
\end{enumerate}
\end{enumerate}
\end{theorem}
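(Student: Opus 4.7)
The plan is to reduce the threshold analysis of $(H_{\alpha,Y}-\lambda^2)^{-1}$, via the resolvent identity \eqref{eqn:eq:resolvent_identity}, to the $\lambda\to 0$ inversion of the finite matrix $\Gamma_{\alpha,Y}(\lambda)$. From \eqref{eqn:hankel} and \eqref{eqn:G0g} I first write the explicit expansions
\[
\Gamma_{\alpha,Y}(\lambda) = -g(\lambda)\,N\bigl(P+\lambda^2\,\Gg_1(Y)\bigr) + \tD - N\lambda^2\,\Gg_2(Y) + O(\lambda^4 g(\lambda)),
\]
\[
\widehat\Gg_{\lambda,Y}(x) = g(\lambda)\,\hat{\mathbf 1} + \widehat G_{0,Y}(x) + O\bigl(\lambda^2 g(\lambda)\,\ax^2\bigr),
\]
in which the only part of $\Gamma_{\alpha,Y}(\lambda)$ that diverges as $\lambda\to 0$ is the rank-one block $-g(\lambda)NP$. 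Accordingly I iterate Schur/Feshbach reductions along a decreasing flag: first peeling off $P\C^N$, then, if necessary, the kernel $T\C^N$ of $S\tD S$ inside $S\C^N$, and finally, if still necessary, the kernel $T_1\C^N$ of $T\Gg_1(Y)T$ inside $T\C^N$. At each step, pairing the corresponding block of $\Gamma^{-1}$ with the projected $\widehat\Gg_{\lambda,Y}$ and adding $\Gg_\lambda(x-y) = g(\lambda)+G_0(x-y)+o(1)$ delivers the claimed expansion; the $\Bb_\sigma$-remainders follow from \eqref{eqn:Hank}--\eqref{eqn:large-hankel} combined with weighted Agmon--Kuroda bounds, the hypothesis $\sigma>1$ being needed to absorb the $\ax^2$ weight of the first correction to $\widehat\Gg_{\lambda,Y}$.

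The first Schur step is essentially explicit: $P\Gamma P|_P$ acts as the scalar $(-gN + c_0 + o(1))P$ with $c_0 = N^{-1}\langle\hat{\mathbf 1},\tD\hat{\mathbf 1}\rangle$, so $(P\Gamma P)^{-1} = -(gN)^{-1}P + O(g^{-2})$ and the Schur complement reads
\[
M_S := S\Gamma S - S\Gamma P\,(P\Gamma P)^{-1}\,P\Gamma S = S\tD S + (gN)^{-1}\,S\tD P\tD S + O(\lambda^2 g,\,g^{-2})
\]
on $S\C^N$. In \emph{case (1)}, $S\tD S|_{S\C^N}$ is invertible, hence $M_S^{-1} = (S\tD S|_{S\C^N})^{-1} + O(g^{-1})$. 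Reassembling $\Gamma^{-1}$ via the block-inverse formula and pairing with $\widehat\Gg_{\lambda,Y}$, the $g(\lambda)$-divergences carried by the $P$-block cancel exactly against the $g(\lambda)$ in $\Gg_\lambda(x-y)$, yielding the explicit formula in (1). If instead $\mathrm{Ker}_{S\C^N}(S\tD S)\neq\{0\}$, let $T$ denote its orthogonal projection: for $u\in T\C^N$ the relation $S\tD u=0$ forces $\tD u = \mu(u)\,\hat{\mathbf 1}$ for some scalar $\mu(u)$, so $T\tD T=0$ (by $\langle u,\hat{\mathbf 1}\rangle=0$ and symmetry) and $T\tD^2 u = \mu(u)\,T\tD\hat{\mathbf 1}$, which gives the rank bound $\mathrm{rank}(T\tD^2 T)\le 1$.

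A second Schur reduction on $(T,\,S-T)$ inside $M_S$ yields, on $T\C^N$,
\[
\Gamma_T = (gN)^{-1}\,T\tD^2 T - N\lambda^2\,T\Gg_1(Y) T - N\lambda^2\,T\Gg_2(Y) T + o\bigl((gN)^{-1}+\lambda^2\bigr).
\]
In \emph{case (2a)}, non-singularity of $T\tD^2 T$ and the rank bound force $\mathrm{rank}\,T=1$, so $T=\fb\otimes\fb$; writing $\tD\fb=\beta\hat{\mathbf 1}$ gives $\gamma_0^{-2}=\beta^2 N$ and hence $\Gamma_T^{-1} = gN\gamma_0^2\,T + O(1)$, so the only $g(\lambda)$-divergent block of $\Gamma^{-1}$ sits in $T$. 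Pairing with $T\widehat\Gg_{\lambda,Y}(x) = T\widehat G_{0,Y}(x) + o(1)$ (note $T\hat{\mathbf 1}=0$) and collecting the constant $g(\lambda)\hat{\mathbf 1}$ cross-contributions from the other blocks of $\widehat\Gg_{\lambda,Y}$ produces the leading term $\gamma_0^{-2}g(\lambda)\,\varphi\otimes\varphi$ with $\varphi(x)=\langle\fb,\widehat G_{0,Y}(x)-\tD\hat{\mathbf 1}/N\rangle$; the shift $-\tD\hat{\mathbf 1}/N$ is exactly what those cross-terms generate. The asymptotic \eqref{eqn:s-wave} then follows from $G_0(x-y_j) = -(2\pi)^{-1}\log|x|+(2\pi)^{-1}\,x\cdot y_j/|x|^2 + O(|x|^{-2})$ combined with $\langle\fb,\hat{\mathbf 1}\rangle=0$, while $b=-N^{-1}\langle\fb,\tD\hat{\mathbf 1}\rangle=-\beta\ne 0$ and $-\Delta\varphi=\sum_j f_j\delta_{y_j}$ are immediate.

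When $T\tD^2 T$ is singular, the rank-one bound forces $T\tD^2 T=0$, equivalently $\tD T=0=T\tD$; the $\tD$-terms in $\Gamma_T$ then vanish identically, leaving $\Gamma_T = -N\lambda^2\bigl(g(\lambda)\,T\Gg_1(Y)T + T\Gg_2(Y)T\bigr)+o(\lambda^2 g)$. In \emph{case (2b)}, $T\Gg_1(Y)T$ is invertible on $T\C^N$, so $\Gamma_T^{-1}$ at leading order is $-(Ng\lambda^2)^{-1}(T\Gg_1 T|_T)^{-1}$; substitution, together with $\langle u,\hat{\mathbf 1}\rangle=0$ for $u\in T\C^N$, forces pure $p$-wave decay $\varphi_j(x)=O(1/|x|)$ with the angular profile in \eqref{eqn:S2aa}. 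In \emph{case (2c)}, both $T\tD^2 T$ and $T\Gg_1 T$ are singular, so I restrict once more to $T_1:=\mathrm{Ker}(T\Gg_1 T|_{T\C^N})$: for $u\in T_1\C^N$ both $\sum_j u_j=0$ and $\sum_j u_j y_j = 0$, whence the Taylor expansion of $G_0$ yields $\psi(x)=\sum_j u_j G_0(x-y_j)=O(|x|^{-2})\in L^2(\R^2)$, and the leading effective operator becomes $-N\lambda^2\,T_1\Gg_2(Y)T_1$. The main obstacle of the proof lies precisely here: one must establish simultaneously that (i) $T_1\Gg_2(Y)T_1$ is non-singular on $T_1\C^N$, (ii) the resulting $\lambda^{-2}$-residue of $(H_{\alpha,Y}-\lambda^2)^{-1}$ coincides with the $L^2$-orthogonal projection onto $\ker H_{\alpha,Y}$, and (iii) the iff characterization of zero as an eigenvalue. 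My plan is to compare the explicit finite-rank residue extracted from $\Gamma_{T_1}^{-1}$ with the spectral projection provided by the functional calculus of $H_{\alpha,Y}$: the latter is self-adjoint and non-negative, which forces positivity of $T_1\Gg_2(Y)T_1$ on its range, identifies the $\psi_j$ as an orthonormal basis of $\ker H_{\alpha,Y}$, and closes the equivalence in both directions.
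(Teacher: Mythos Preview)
Your iterated Schur/Feshbach reduction along the flag $\C^N\supset S\C^N\supset T\C^N\supset T_1\C^N$ is exactly the paper's approach, which is phrased there via repeated application of the Jensen--Nenciu lemma (\reflm(JN)). Cases (1), (2a), (2b) and the rank-one bound for $T\tD^2T$ track the paper's argument closely. (One slip: in your displayed formula for $\Gamma_T$ the term $T\Gg_1(Y)T$ should carry an extra factor $g(\lambda)$; you have it right two sentences later, so this is presumably a typo.)

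The genuine gap is in (2c). Your plan to deduce the non-singularity of $T_1\Gg_2(Y)T_1$ on $T_1\C^N$ by comparing the explicit $\lambda^{-2}$-residue with the spectral projection onto $\ker H_{\alpha,Y}$ is circular: you cannot extract that residue from $\Gamma_{T_1}^{-1}$ unless $T_1\Gg_2(Y)T_1$ is already known to be invertible; if it had a nontrivial kernel $T_2$, a further Schur step would produce a singularity strictly worse than $\lambda^{-2}$, and nothing in the functional calculus of a self-adjoint operator with a \emph{threshold} eigenvalue excludes this a priori (zero is not an isolated point of $\sigma(H_{\alpha,Y})$, so there is no residue formula available independently of the matrix analysis you are carrying out). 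The paper closes this by a direct positivity argument (\reflm(gGY2)): for real $\fb\in T_1\C^N$ one sets $F(\mu)=\sum_{j\ne k}f_jf_k|y_j-y_k|^2\log(|y_j-y_k|^2+\mu)$, uses $\fb\in\ker T\Gg_1(Y)T$ to get $F(\mu)\to 0$ as $\mu\to\infty$, and shows $F'(\mu)<0$ via a Gaussian integral representation, whence $F(0)>0$. Relatedly, for the iff you also need to (i) verify that $\psi_\fb\in D(H_{\alpha,Y})$ with $H_{\alpha,Y}\psi_\fb=0$ when $\fb\in T_1\C^N$ (this uses $\tD\fb=0$ in the domain description \refeq(dom), not merely $\psi_\fb\in L^2$), and (ii) show conversely that every $L^2$ zero-mode arises from some $\fb$ satisfying \refeq(horia2); the paper handles both in Proposition~\ref{lemahoria1} by a separate limiting argument, not by residue matching.
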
 

We say that $H_{\a,Y}$ is of regular type in the case (1) 
and that $H_{\a, Y}$ has 
zero energy resonance of $s$-wave type in the case {\rm (2.a)} 
and $p$-wave type 
in the case {\rm (2.b)}. 
Note that in all cases the leading term as $\lam \to 0$ of $(H_{\a,Y}-\lam^2)^{-1}$ 
is an operator of finite rank. The behaviours of $\ph(x)$ in the $s$-wave resonance 
or $\ph_1(x), \dots, \ph_n(x)$ in the $p$-wave resonance case are similar to the 
corresponding resonance functions of Schr\"odinger operators with regular very 
short range potentials (cf. \cite{JN}, \cite{EG}). 

\vspace{0.2cm}

\noindent
{\bf Remarks.} 

\noindent (1) If $N=1$, $H_{\a,Y}$ is always of regular type. This { directly follows from \refeq(eq:resolvent_identity) where the $\log \lambda$ singularity cancels identically}.

\noindent(2) Let $N=2$. Then: 

(i) $H_{\a,Y}$ is of regular type { if and only if} $\a_1+\a_2 { \neq} \pi^{-1}\log |y_1-y_2|$.

(ii) $H_{\a,Y}$ has a resonance of $s$-wave type { if and only if 
\begin{align*}
&\a_1+\a_2 = \pi^{-1}\log |y_1-y_2|\quad {\rm and}\\
&\left (\alpha_1-(2\pi)^{-1}\log|y_1-y_2|\right )^2+\left (\alpha_2-(2\pi)^{-1}\log|y_1-y_2|\right )^2>0.
\end{align*} }

(iii) $H_{\a,Y}$ has a resonance of $p$-wave type { if and only if} { $$\a_1=\a_2 = (2\pi)^{-1}\log |y_1-y_2|.$$ }

(iv) $H_{\a,Y}$ cannot have a zero energy eigenvalue. 

\noindent(3) { If $N\geq 3$, we shall prove that 
both $T\tD^2 T$ and $T \Gg_1(Y) T$ can be singular and a zero eigenvalue can exist. { A similar} argument 
also applies to the three dimensional case, thus the 
statement on the absence of zero eigenvalue for point 
interactions in \cite{albeverio-solvable} is incorrect. 

More precisely, we have the following result:
\begin{proposition} \label{lemahoria1} Let $N \geq 3$. Assume that 
$\ab= {}^t(a_1, \dots, a_N) \in \R^N\setminus \{0\}$ satisfies 
\bqn \lbeq(horia2) 
\sum_{j=1}^Na_j=0,\;\; \sum_{j=1}^N a_j y_j=0, \;\; {\rm and} \;\; \tD \ab=0.
\eqn
Then the function 
\begin{align}\label{horia6}
\psi(x)=-\sum_{j=1}^N \frac{a_j}{2\pi} \log |x-y_j|
\end{align}
belongs to the domain of $H_{\alpha,Y}$ and $H_{\alpha,Y}\psi=0$. 
Moreover, the converse is also true: any eigenfunction which obeys $H_{\alpha,Y}\psi=0$ 
must be of the form \eqref{horia6} where ${\bf a}\in \R^N\setminus \{0\}$ obeys \refeq(horia2). 
\end{proposition}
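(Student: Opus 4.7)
My strategy is to use the domain representation \refeq(dom) as the primary tool. Fix $z = ia$ with $a>0$ large enough so that $-z^2 = -a^2$ lies below $\inf \sigma(H_{\a,Y})$; then $\Gamma_{\a,Y}(z)$ is invertible, every $\Gg_z(\cdot-y_j)\in L^2(\R^2)$ (logarithmic singularity at $y_j$, exponential decay at infinity), and the expansion $\Gg_z(x) = g(z) + G_0(x) + R(x;z)$ from \refeq(G0g) holds, where $R(\cdot;z)$ vanishes continuously at the origin.

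\textbf{Forward direction.} Assume $\mathbf{a}$ satisfies \refeq(horia2). Expanding $\log|x - y_j| = \log|x| - x\cdot y_j/|x|^2 + O(|x|^{-2})$ at infinity and using $\sum a_j = 0$ and $\sum a_j y_j = 0$, one sees $\psi(x) = O(|x|^{-2})$, so $\psi \in L^2(\R^2)$. Set $v := \psi - \sum_j a_j \Gg_z(\cdot - y_j)$. The logarithmic singularities of $\psi$ and of the $\Gg_z$-sum cancel pairwise near each $y_k$, and using $\sum a_j = 0$ also the $g(z)$-constants cancel, yielding $v$ continuous with
$$v(y_k)\;=\;-\sum_{j\neq k} a_j R_j(y_k), \qquad R_j(x):=\Gg_z(x-y_j) - g(z) - G_0(x-y_j).$$
Since $\psi$ and each $\Gg_z(\cdot - y_j)$ lie in $L^2(\R^2)$, so does $v$; combined with $(-\lap - z^2) v = -z^2 \psi \in L^2$, this gives $v \in H^2(\R^2)$, hence continuous. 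It remains to verify the algebraic compatibility $\Gamma_{\a,Y}(z)\mathbf{a} = (v(y_1),\ldots,v(y_N))^t$ required by \refeq(dom). A direct computation, substituting $\Gg_z(y_k - y_j) = g(z) + G_0(y_k - y_j) + R_j(y_k)$ into \refeq(ga-def) and using $\sum a_j = 0$ to collapse the $g(z)$ contributions, yields
$$\bigl(\Gamma_{\a,Y}(z)\mathbf{a}\bigr)_k \;=\; (\tD\mathbf{a})_k - \sum_{j\neq k} a_j R_j(y_k),$$
which equals $v(y_k)$ precisely when $\tD\mathbf{a} = 0$. Thus $\psi \in D(H_{\a,Y})$, and by property (2) of $H_{\a,Y}$, $(H_{\a,Y} - z^2)\psi = (-\lap - z^2)v = -z^2\psi$, i.e.\ $H_{\a,Y}\psi = 0$.

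\textbf{Converse.} Let $\psi \in D(H_{\a,Y})\setminus\{0\}$ with $H_{\a,Y}\psi = 0$. Using \refeq(dom), write $\psi = v + \sum_j c_j \Gg_z(\cdot - y_j)$ with $v \in H^2$ and set $a_j := c_j$. Combining $(H_{\a,Y}-z^2)\psi = (-\lap-z^2)v = -z^2\psi$ with the identity $(-\lap - z^2)\Gg_z(\cdot - y_j) = \delta_{y_j}$ yields the distributional relation $-\lap\psi = \sum_j a_j \delta_{y_j}$ on $\R^2$. Hence $\widetilde\psi(x) := \psi(x) + \sum_j \frac{a_j}{2\pi}\log|x - y_j|$ is harmonic on all of $\R^2$ (Weyl's lemma); since $\psi \in L^2(\R^2)$, $\widetilde\psi$ grows at most logarithmically at infinity and is therefore constant by Liouville's theorem, so $\psi(x) = C - \sum_j \frac{a_j}{2\pi}\log|x-y_j|$ for some $C\in\R$. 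Expanding at infinity, $\psi \in L^2(\R^2)$ forces successively $C = 0$, $\sum a_j = 0$ (killing the $\log|x|$ tail) and $\sum a_j y_j = 0$ (killing the $|x|^{-1}$ tail). This yields both \eqref{horia6} and the first two conditions of \refeq(horia2). For the third, equating the finite part of $\psi$ at $y_k$ computed from \eqref{horia6}, namely $-\sum_{j\neq k}\frac{a_j}{2\pi}\log|y_k-y_j|$, with that computed from \refeq(dom), namely $a_k g(z) + v(y_k) + \sum_{j\neq k} a_j \Gg_z(y_k - y_j)$, produces $v(y_k) = -\sum_{j\neq k}a_j R_j(y_k)$; comparing to the identity $v(y_k) = (\Gamma_{\a,Y}(z)\mathbf{a})_k = (\tD\mathbf{a})_k - \sum_{j\neq k}a_j R_j(y_k)$ established above forces $\tD\mathbf{a} = 0$.

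\textbf{Main obstacle.} The delicate point throughout is the bookkeeping of the $g(z)$-contributions across the expansions both near each $y_k$ and at infinity. The condition $\sum a_j = 0$ produces a clean cancellation of every $g(z)$-term when one compares $v(y_k)$ with $(\Gamma_{\a,Y}(z)\mathbf{a})_k$, leaving the $z$-independent identity $\tD\mathbf{a} = 0$ as the residual algebraic content. This explains both why $\sum a_j = 0$ must be available before one can meaningfully impose $\tD\mathbf{a} = 0$ and why the zero-mode criterion is independent of the choice of $z \in \C^{+}\setminus \Eg$.
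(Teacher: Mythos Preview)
Your forward direction is essentially identical to the paper's: define $v_\mu=\psi-\sum_j a_j\,\Gg_\mu(\cdot-y_j)$, check $v_\mu\in H^2$, verify $v_\mu(y_k)=(\Gamma_{\a,Y}(\mu)\mathbf a)_k$ using $\tD\mathbf a=0$, and conclude $(H_{\a,Y}-\mu^2)\psi=(-\lap-\mu^2)v_\mu=-\mu^2\psi$.

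Your converse, however, follows a genuinely different path. The paper argues in Fourier space: from $(-\lap-\mu^2)v_\mu=-\mu^2\psi$ it writes $\hat v_\mu(p)=-\frac{\mu^2}{2\pi}\sum_j a_j\frac{e^{-ip\cdot y_j}}{p^2(p^2-\mu^2)}$, and the requirement $\hat v_\mu\in L^2$ near $p=0$ forces $\sum a_j=0$ and $\sum a_j y_j=0$. For $\tD\mathbf a=0$ the paper takes $\mu=i\lambda\to 0$, shows $\|v_{i\lambda}\|_{H^2}\to 0$ by dominated convergence, hence $v_\mu(y_k)\to 0$ uniformly, and passes to the limit in $\Gamma_{\a,Y}(\mu)\mathbf a=(v_\mu(y_k))_k$ to obtain $\tD\mathbf a=0$. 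You instead derive $-\lap\psi=\sum_j a_j\delta_{y_j}$, invoke Weyl and Liouville to get $\psi=C-\sum_j\frac{a_j}{2\pi}\log|x-y_j|$, read off the three constraints from $\psi\in L^2$, and recover $\tD\mathbf a=0$ by matching finite parts at $y_k$ against the domain identity $v(y_k)=(\Gamma_{\a,Y}(z)\mathbf a)_k$ for a \emph{fixed} $z$. Your route is more elementary and self-contained (no limiting procedure, no Fourier computation), and it makes the $z$-independence of the criterion transparent. The paper's route is closer in spirit to the resolvent analysis pervading the rest of the article and yields the explicit formula for $\hat v_\mu$ as a by-product.

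One small tightening: the sentence ``since $\psi\in L^2(\R^2)$, $\widetilde\psi$ grows at most logarithmically'' is not quite a justification, as $L^2$ membership alone gives no pointwise control. What you actually need (and have) is that $\psi=v+\sum_j a_j\Gg_z(\cdot-y_j)$ with $v\in H^2(\R^2)\subset C_0(\R^2)$ and $\Gg_z(\cdot-y_j)$ bounded and decaying away from $y_j$; hence $\psi$ is bounded outside a compact set, and $\widetilde\psi=O(\log|x|)$ follows. With that clarification, Liouville for harmonic functions of sublinear growth applies cleanly. Also, the order in which the tail conditions are extracted is $\sum a_j=0$ first (killing $\log|x|$), then $C=0$, then $\sum a_j y_j=0$; your listing has the first two swapped, but this is cosmetic.
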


For $N=3$ and $y_1, y_2, y_3\in \R^2$ which are collinear or for $N \geq 4$ and arbitrary 
$y_1, \dots, y_N\in \R^2$, there exists $\ab \in \R^N\setminus \{0\}$ which satisfies the first two equations 
of \refeq(horia2). Then we can always find $\alpha$ such that 
$\tilde{D}\ab=0$ and, hence, $H_{\a, Y}$ has an eigenvalue at zero. We will also prove in Lemma \ref{lemahoria2} that a zero mode (if it exists) is always non-degenerate when $N\leq 4$ and we conjecture that this is always true. 
}
\vspace{0.2cm}

The third main result of our paper is the following theorem:
\begin{theorem} \lbth(1)
Suppose that $H_{\a, Y}$ is of regular type. 
Then the wave operators $W_\pm$ are bounded in 
$L^p(\R^2)$ for $1<p<\infty$.   
\end{theorem}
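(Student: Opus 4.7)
The plan is to use the standard stationary representation of the wave operators in tandem with the rank-$N$ resolvent identity \refeq(eq:resolvent_identity), thereby reducing the theorem to proving $L^p$-boundedness of finitely many explicit integral operators built from Hankel and Bessel functions, and then to analyse the high-energy and low-energy contributions separately. More precisely, starting from
\[
(W_\pm -1)f \;=\; -\frac{1}{\pi i}\int_0^\infty \bigl[R(\lambda^2\mp i0) - R_0(\lambda^2\mp i0)\bigr]\bigl[R_0(\lambda^2+i0) - R_0(\lambda^2-i0)\bigr]f\,\lambda\,d\lambda
\]
(with $R(\zeta)=(H_{\a,Y}-\zeta)^{-1}$ and $R_0(\zeta)=(-\D-\zeta)^{-1}$), I would insert \refeq(eq:resolvent_identity) on the first bracket and use $R_0(\lambda^2+i0)-R_0(\lambda^2-i0)=(i/2)J_0(\lambda|\cdot|)$ on the second, writing $W_\pm - 1 = \sum_{j,k=1}^N \W_{jk}^\pm$ as a finite sum of integral operators with kernels of the form
\[
K^\pm_{jk}(x,w) \;=\; c_\pm \int_0^\infty \{\Gamma(\lambda)\}^{-1}_{jk}\,\Gg_\lambda(x-y_j)\,\bigl(J_0(\lambda|\cdot-w|) \ast \Gg_\lambda(\cdot - y_k)\bigr)\,\lambda\,d\lambda .
\]
By translation invariance of $L^p(\R^2)$ one may reduce each summand to $y_j=y_k=0$.

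Next, I would fix a cutoff $\chi\in C_0^\infty(\R)$ equal to $1$ near $0$ and split $\W_{jk}^\pm = \W^{\pm,\mathrm{low}}_{jk} + \W^{\pm,\mathrm{high}}_{jk}$. The high-energy piece is handled using \refeq(large-hankel) to write $\Gg_\lambda(x) = e^{i\lambda|x|}\tilde\omega(\lambda|x|)$ with $\tilde\omega\in S^{-1/2}$, together with the fact that for large $\lambda$ one has $\Gamma(\lambda) = g(\lambda)I + O(\lambda^{-1/2})$ and hence $\{\Gamma(\lambda)\}^{-1}_{jk} = O(|\log\lambda|^{-1})$. The resulting operator is a Fourier integral operator whose amplitude lies in a standard symbol class (with an additional $|\log\lambda|^{-1}$ gain), and its $L^p$-boundedness for $1<p<\infty$ is obtained by a dyadic Littlewood--Paley decomposition in $\lambda$, stationary-phase in the angular variable of the spectral representation of $-\D$, and the Mihlin--H\"ormander multiplier theorem --- in substance the same analysis used for the $L^p$-boundedness of wave operators in the case of smooth short-range potentials.

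The delicate part is the low-energy contribution, and this is where the regular-type hypothesis enters decisively --- and it is the main technical obstacle. By Theorem \refth(zerolimit)(1), $\Gamma(\lambda)^{-1}$ extends continuously to $\lambda = 0$ and splits, through the projections $P = \eb\otimes\eb$ and $S = 1-P$, into a piece of order $g(\lambda)^{-1}$ and a piece converging to $(S\tD S)^{-1}$. Combined with the Hankel expansion \refeq(G0g), $\Gg_\lambda(x-y_j) = g(\lambda) + G_0(x-y_j) + O(\lambda^2|x-y_j|^2 g)$, one reorganises the integrand of $\W^{\pm,\mathrm{low}}_{jk}$ in powers of $g(\lambda)$; the algebraic identities underlying the finite limit in Theorem \refth(zerolimit)(1) --- precisely the content of the regular-type assumption --- guarantee the exact cancellation of the leading $g(\lambda)$-divergent terms, leaving operators whose $\lambda$-integrals against $\chi(\lambda)\lambda\,d\lambda$ are either finite-rank with bounded kernels or convolutions against smooth, rapidly-decaying radial functions, both of which are $L^p$-bounded for $1<p<\infty$. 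The obstacle is precisely in this verification: the individual factors $\Gg_\lambda(x-y_j)$ and $\{\Gamma(\lambda)\}^{-1}_{jk}$ both diverge as $\lambda\to 0$, and the naive $\lambda\to 0$ limit $G_0(x-y_j) = -(2\pi)^{-1}\log|x-y_j|$ is not in any $L^p(\R^2)$, so one cannot pass to the limit inside the $\lambda$-integral; instead, one must keep the full $\lambda$-dependent structure, check that the cancellations take place at the operator level, and then upgrade the boundedness from the weighted $\Bb_\s$ sense of Theorem \refth(zerolimit)(1) to boundedness on all $L^p(\R^2)$ with $1<p<\infty$.
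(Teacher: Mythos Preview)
Your stationary representation and reduction to the operators $\Omega_{jk}$ match the paper, but you miss the key structural step that makes the paper's argument work: the factorisation $\Omega_{jk} = K \circ \tilde\Gamma_{jk}(|D|)$, where $\tilde\Gamma_{jk}(|D|)$ is the Fourier multiplier with symbol $[\overline{\Gamma_{\a,Y}(|\xi|)}^{-1}]_{jk}$ and $K$ is a \emph{universal} operator (independent of $j,k,\a,Y$) that is shown to be $L^p$-bounded by reducing it, via spherical means and a change of variables, to the one-dimensional Hilbert transform. This factorisation completely decouples the threshold analysis of $\Gamma(\lambda)^{-1}$ from the Hankel kernels $\Gg_\lambda(x)$. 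As a consequence, the low-energy part becomes almost trivial: the regular-type hypothesis enters only through the derivative bound \refeq(8-1), namely $|\partial_\lambda^\ell[\Gamma^{-1}(\lambda)]_{jk}| \leq C\lambda^{-\ell}$, which is exactly the Mikhlin condition for $\chi(|\xi|)\tilde\Gamma_{jk}(|\xi|)$. No cancellation argument in powers of $g(\lambda)$, and no upgrade from the weighted $\Bb_\s$ bounds of \refth(zerolimit)(1) to $L^p$, is needed; this is precisely the step you flag as the main obstacle, and you do not say how to carry it out.

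Your high-energy treatment also has a gap when $N\geq 2$. You write $\{\Gamma(\lambda)\}^{-1}_{jk} = O(|\log\lambda|^{-1})$ and treat the result as a standard FIO, but for $N\geq 2$ the off-diagonal entries $\Gg_\lambda(y_j-y_k)$ in $\Gamma(\lambda)$ are oscillatory at infinity, so $\Gamma(\lambda)^{-1}$ itself contains phases $e^{ia\lambda}$ and is not a classical symbol; Mikhlin does not apply directly, and a naive symbol-class FIO argument breaks down. The paper handles this by a finite Neumann expansion of $(1-\chi(\lambda))\tilde\Gamma(\lambda)$ into a sum $\sum_\ell e^{ia_\ell\lambda}b_\ell(\lambda)$ with $b_\ell\in S^{-1/2}$ plus a remainder in $S^{-2}$, and then invokes Peral's theorem on the translation-invariant FIO $f\mapsto \int e^{ix\xi+i|\xi|}|\xi|^{-1/2}\psi(\xi)\hat f(\xi)\,d\xi$ (which is bounded on $L^p(\R^2)$ for all $1<p<\infty$ exactly because $b=\tfrac12 = \tfrac{n-1}{2}$).
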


It has been long known (see \cite{DMW}) that the wave operators for one dimensional 
Schr\"odinger operators with point interactions are bounded in $L^p(\R^1)$ 
for all $1<p<\infty$ and, in three dimensions 
{ it was} recently shown (\cite{DMSY}) that they 
are bounded in $L^p(\R^3)$ if and only if $3/2<p<3$. 
Thus, there is a sharp contrast between 
the results in dimensions one or two { compared to dimension} three. { We also note that for Schr\"odinger operators with multiplicative short-range potentials it has been recently proved \cite{EGG} that the wave operators remain bounded in $L^p(\R^2)$ 
for all $1<p<\infty$ even when there is an $s$-wave resonance or an eigenvalue at threshold. }

The intertwining property \refeq(inter) reduces the mapping properties of 
the AC part of the functions $f(H_{\a,Y})$ of $H_{\a, y}$ to 
that of $f(H_0)$ and there is a large body of literature on 
the $L^p$ mapping properties of the wave operators (for this we refer to the 
reference of \cite{DMSY, Ya}). The same intertwining property \refeq(inter) and the well known $L^p$-$L^q$ estimates for 
the free propagator imply the corresponding property of $e^{-itH_{\a,Y}}$. 
We write $\|u\|_p=\|u\|_{L^p(\R^2)}$ for $1\leq p \leq \infty$ and $p'$ is { the} dual 
exponent of $p$ defined by $1/p+1/p'=1$.

\begin{corollary} For any $2\leq p <\infty$, there exists a constant $C_p$ such that 
\bqn \lbeq(p-q)
\|e^{-itH_{\a,Y}}P_{ac}(H_{\a,Y}) u\|_p \leq C_p |t|^{1/p-1/2}\|u\|_{p'}, 
\quad u \in L^p(\R^2) \cap L^2(\R^2).   
\eqn 
\end{corollary}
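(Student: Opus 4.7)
The corollary follows by combining \refth(1) with the classical dispersive estimate for the free Schr\"odinger group; in particular, it inherits the regular-type hypothesis of \refth(1). My plan consists of three short steps.

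First, specialising the intertwining identity \refeq(inter) to the bounded Borel function $f(\lam)=e^{-it\lam}$ yields the factorisation
\[
e^{-itH_{\a,Y}}P_{ac}(H_{\a,Y}) \;=\; W_{\pm}\, e^{-itH_0}\, W_{\pm}^{\ast},
\]
so that bounding the left-hand side from $L^{p'}(\R^2)$ to $L^p(\R^2)$ reduces to chaining three estimates for the three factors on the right.

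Second, \refth(1) yields the $L^q$-boundedness of $W_\pm$ for every $1<q<\infty$, and passing to adjoints promotes the same assertion to $W_\pm^{\ast}$ on $L^{p'}(\R^2)$ and to $W_\pm$ on $L^p(\R^2)$, with operator norms independent of~$t$. Thus both outer factors are under control on the relevant spaces.

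Third, I invoke the classical two-dimensional free-propagator bound from $L^{p'}$ to $L^p$, obtained by interpolating the $L^2$-unitarity of $e^{-itH_0}$ with the explicit $L^1\!\to L^\infty$ dispersive estimate read off the Gaussian form of the free Schr\"odinger kernel on $\R^2$. Concatenating the three bounds then delivers \refeq(p-q) in one line. I foresee no genuine obstacle: the only nontrivial input is \refth(1), which has just been established, and the remaining two steps are textbook.
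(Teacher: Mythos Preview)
Your proposal is correct and matches the paper's own argument essentially verbatim: the paper treats this corollary as an immediate consequence of the intertwining property \refeq(inter) combined with \refth(1) and the well-known $L^{p'}\!\to L^p$ dispersive estimate for the free propagator, which is exactly the three-step chain you outline. There is nothing to add.
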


An immediate corollary of the $L^p$-$L^q$ estimates \refeq(p-q) are the Strichartz estimates 
in two dimensions: We say $(p,r)$ is a $2$-dimensional Strichartz exponent if it satisfies 
\[
\frac1{p}+ \frac{1}{q}=\frac12, \ \ 2<q\leq \infty.
\]
\begin{corollary} Suppose that $H_{\a, Y}$ is of regular type.  Let 
$(p,q)$ and $(s,r)$ be $2$-dimensional Strichartz exponents. Then { there exists a constant $C>0$ such that:}
\begin{gather*}
\left(\int_\R \|e^{-itH_{\a,Y}} u\|_{L^p(\R^2)}^{q}dt \right)^{1/q} \leqslant C \|u\|_2, \\  
\left\| 
\int_0^t e^{-i(t-s)H_{\alpha,Y}}P_{ac}(H_{\alpha,Y}) \,u(s)\,\ud s
\right\|_{L^q(\mathbb{R}_t, L^p(\mathbb{R}^2_x))}
\;\leqslant\;
C\|u\|_{L^{s'}(\mathbb{R}_t, L^{r'}(\mathbb{R}^3_x))}\,.
\end{gather*}
\end{corollary}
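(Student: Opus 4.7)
The plan is to derive both Strichartz inequalities as direct consequences of the dispersive bound \refeq(p-q) together with the $L^2$-isometry of the group $e^{-itH_{\a,Y}}$ on $L^2_{ac}(H_{\a,Y})$, by running the standard non-endpoint $TT^*$ argument of Ginibre--Velo (equivalently a non-endpoint instance of the Keel--Tao theorem). Setting $U(t):=e^{-itH_{\a,Y}}P_{ac}(H_{\a,Y})$, self-adjointness of $H_{\a,Y}$ gives $\|U(t)u\|_{L^2}\leq\|u\|_{L^2}$, while \refeq(p-q) yields the associated dispersive bound
$\|U(t)U(s)^*\|_{L^{p'}(\R^2)\to L^p(\R^2)}\leq C|t-s|^{-\s(p)}$
for $2\leq p<\infty$, with $\s(p)>0$ the decay rate encoded in \refeq(p-q). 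These are precisely the two ingredients required by $TT^*$, and the homogeneous Strichartz inequality should be read with $u$ replaced by $P_{ac}u$ (necessary if $H_{\a,Y}$ carries bound states, since those persist in $L^p$ and violate any $L^q_t$-integrability for $q<\infty$).

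For the homogeneous estimate, duality converts $\|U(\cdot)u\|_{L^q_t L^p_x}\leq C\|u\|_{L^2}$ into the dual bound
$\bigl\|\int_\R U(s)^* F(s)\,ds\bigr\|_{L^2}\leq C\|F\|_{L^{q'}_t L^{p'}_x}$.
Squaring the left-hand side and expanding produces $\int\!\!\int \la F(t),U(t)U(s)^* F(s)\ra \,dt\,ds$, and estimating the inner pairing via H\"older in $x$ together with the dispersive bound in $t$ reduces matters to
\[
C \int\!\!\int_{\R\times\R} |t-s|^{-\s(p)}\,\|F(t)\|_{L^{p'}}\,\|F(s)\|_{L^{p'}}\,dt\,ds .
\]
A single application of the Hardy--Littlewood--Sobolev inequality in the time variable closes the estimate and produces exactly the admissibility relation $1/p+1/q=1/2$, valid strictly away from the HLS endpoint, which here corresponds to $q>2$.

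The retarded inhomogeneous estimate is then obtained either by applying the Christ--Kiselev lemma to the full-line bilinear bound just established, now paired against a second admissible exponent pair $(s,r)$, or equivalently by invoking the inhomogeneous half of the Keel--Tao theorem directly. Either way the computation is a mild variant of the above double integral, closed once more by H\"older and Hardy--Littlewood--Sobolev, and yielding the joint admissibility $1/p+1/q=1/r+1/s=1/2$ together with the strict non-endpoint conditions $q,s>2$ required for Christ--Kiselev.

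The principal (and essentially only) technical caveat is the two-dimensional forbidden endpoint $(q,p)=(2,\infty)$, excluded at the final HLS step precisely when $q=2$; this is exactly why the statement requires $2<q\leq\infty$. Apart from this, no further input beyond \refeq(p-q) and the $L^2$-conservation of $e^{-itH_{\a,Y}}P_{ac}$ is needed; the intertwining property \refeq(inter) enters only implicitly, via its role in the derivation of \refeq(p-q).
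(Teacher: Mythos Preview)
Your approach is correct and matches the paper's intent: the paper gives no explicit proof of this corollary, stating only that it is ``an immediate corollary of the $L^p$-$L^q$ estimates \refeq(p-q),'' and your $TT^\ast$/Hardy--Littlewood--Sobolev argument is precisely the standard mechanism by which such dispersive bounds yield non-endpoint Strichartz estimates. Your observation that the homogeneous estimate should really carry a $P_{ac}(H_{\a,Y})$ (since negative eigenvalues of $H_{\a,Y}$ would otherwise obstruct time-integrability) is a valid caveat about the statement as written.
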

For more about Schr\"odinger operators with point interactions we refer to the 
monograph \cite{albeverio-solvable}, while for $L^p$ boundedness of wave operators we refer 
to our previous papers \cite{DMSY,Ya} and references therein. 

 The structure of the remaining text is as follows:
\begin{itemize}
 \item In Section \ref{section2} we give a detailed analysis of the behaviour of $\Ga(\lam)^{-1}$ near $\lambda=0$ and we classify its possible singularities. 
 
 \item In Section \ref{section3} we prove both Theorem \ref{th:zerolimit} and Proposition \ref{lemahoria1}, results which completely characterize the threshold behaviour of the class of zero point interactions we consider here. 
 
 \item Finally, in Section \ref{section4} we give the proof of Theorem \ref{th:1} concerning the $L^p$ boundedness of wave operators. 
\end{itemize}

 \vspace{0.2cm}

\thanks{\noindent {\bf Acknowledgements:} K.Y. acknowledges the support  by JSPS grant in aid for scientific research No. 16K05242. H.C. acknowledges financial support from the Danish Council of Independent Research
| Natural Sciences, Grant DFF–4181-00042.}

\section{The small $\lam$ behaviour of $\Ga(\lam)^{-1}$}\label{section2}

We begin { with the study of} the small $\lam$ behaviour of $\Ga(\lam)^{-1}$.  
In this section, { the notation} $O(g(\lam)^j \lam^k)$, 
$j \in \Z$ and $k \in \N$, represents a scalar or a 
matrix-valued function  
which has { an} asymptotic expansion {when} $\lam \to 0$ as 
\bqn \lbeq(asympto)
O(g(\lam)^j \lam^k)
= C_j g(\lam)^j \lam^k + C_{j-1} g(\lam)^{j-1} \lam^k+ \cdots 
\quad \mbox{mod} \ O(\lam^{k+1}) 
\eqn 
which may be differentiated term by term. For { simplicity},  
{\it we will often omit the variable $\lam$ from various functions and write e.g. 
$g$ for $g(\lam)$, $F$ for $F(\lam)$} and { so on.  The expression $\big(a_{jk}\big)$ will denote an $N \times N$ matrix with entries $a_{jk}$.}  

We shall repeatedly use the following lemma due to Jensen and Nenciu (\cite{JN}) 
in the case when $\Hg$ is finite dimensional. 

\begin{lemma} \lblm(JN) 
Let $A$ be a closed operator in a Hilbert space $\Hg$ 
and $S$ a projection. Suppose $A+S$ has a bounded inverse. 
Then, $A$ has a bounded inverse if and only if 
\[
B= S - S(A+S)^{-1}S 
\]
has a bounded inverse in $S\Hg$ and, in this case, 
\bqn \lbeq(JN-1)
A^{-1}= (A+S)^{-1}+ (A+S)^{-1}SB^{-1}S (A+S)^{-1}.
\eqn 
\end{lemma}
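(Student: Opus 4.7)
The plan is to exploit the decomposition $A = M - S$ where $M := A+S$ is invertible with bounded inverse $X := (A+S)^{-1}$. Since $X$ maps $\mathcal{H}$ into $D(M) = D(A)$ and $S$ is bounded, the products $AX$ and $XA$ make sense and yield the four identities
\[
AX = I - SX, \quad XA = I - XS, \quad AXS = B = SXA,
\]
the first on $\mathcal{H}$ and the second on $D(A)$. These will carry the entire argument. Since $B = SB = BS$, I will regard $B$ as an operator on $S\mathcal{H}$ and, when it exists, extend $B^{-1}$ to $\mathcal{H}$ by zero on $(I-S)\mathcal{H}$, so that $BB^{-1} = B^{-1}B = S$ as operators on $\mathcal{H}$.

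For sufficiency, assume $B$ is invertible on $S\mathcal{H}$ and set $Y := X + XSB^{-1}SX$. Because the outer factor $X$ in each term maps into $D(A)$, the product $AY$ is well-defined, and
\[
AY = AX + (AXS)\,B^{-1}SX = (I - SX) + B B^{-1} S X = I.
\]
The reverse identity $YA = I$ on $D(A)$ follows by the symmetric calculation using $B^{-1}B = S$ together with $S^2 = S$, which establishes formula \eqref{eqn:JN-1}.

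For necessity, assume $A$ is invertible. Applying $M - S$ to $A^{-1}$ yields $A^{-1} = X + XSA^{-1}$, and symmetrically $A^{-1} = X + A^{-1}SX$; sandwiching between projections $S$ gives the twin identities
\[
SA^{-1}S - SXS \;=\; SXSA^{-1}S \;=\; SA^{-1}SXS.
\]
I then claim that $C := S + SA^{-1}S$ inverts $B$ on $S\mathcal{H}$. Substituting each twin identity into the expansions of $BC = (S - SXS)(S + SA^{-1}S)$ and $CB = (S + SA^{-1}S)(S - SXS)$ makes every cross term cancel, leaving $BC = CB = S$. Thus $B|_{S\mathcal{H}}$ has bounded inverse $C|_{S\mathcal{H}} = S + SA^{-1}S$.

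The main obstacle is not algebraic but domain bookkeeping: since $A$ is only assumed closed, one must verify at each step that every product actually lands in $D(A)$, and one must consistently treat $B$ and $B^{-1}$ as operators on the subspace $S\mathcal{H}$ while still multiplying them against objects defined on all of $\mathcal{H}$. Once the conventions $SB = BS = B$ and $BB^{-1} = B^{-1}B = S$ are fixed, the argument reduces to the compact algebra above.
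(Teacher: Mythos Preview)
Your proof is correct. The paper does not actually prove this lemma: it is stated as a result due to Jensen and Nenciu \cite{JN} and used only in the finite-dimensional setting, so there is no proof in the paper to compare against. Your argument is the standard Schur-complement (Feshbach) computation, carried out cleanly and with appropriate attention to domain issues for unbounded $A$; in particular, the identities $AXS = SXA = B$ and the explicit formula $C = S + SA^{-1}S$ for the inverse of $B$ are exactly the ingredients that appear in the Jensen--Nenciu paper.
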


From \refeq(hankel) and \refeq(bessel) and the 
definition \refeq(g) of $g(\lam)$, we  have as 
$\lam \to 0$ that 
\bqn \lbeq(HNKL)
\frac{i}4 H_0^{(1)}(\lam)
= g(\lam) -  \frac1{4}g(\lam)\lam^2  - 
\frac1{8\pi} \lam^2 + O(\lam^4 g(\lam))  
\eqn 
and, for $j\not=k$, 
\begin{multline}
-{\Gg}_\lam(y_j-y_k) = - g(\lam) + \frac1{2\pi}\log |y_j-y_k| \\
+  \frac{\lam^2}{4}g(\lam)|y_j-y_k| ^2 + \frac{\lam^2}{8\pi}|y_j-y_k| ^2\log\Big(\frac{e}{|y_j-y_k|}\Big) 
+ O(\lam^4 g). \lbeq(jkg)
\end{multline}
It follows from \refeq(ga-def) and \refeq(jkg) that as $\lam\to 0$  
\begin{align} 
\Ga(\lam)& =- g(\lam) 
\begin{pmatrix} 1 & \cdots  & 1 \\ \vdots & \ddots & \vdots \\ 
 1 & \cdots  & 1 \end{pmatrix} 
+ \tD + g(\lam)\lam^2 \Big(\frac{\hat\d_{jk}}{4}|y_j-y_k|^2\Big)
\notag \\ 
& \qquad + \frac{\lam^2}{8\pi}\left(
\hat{\d}_{jk} |y_j-y_k| ^2\log\Big(\frac{e}{|y_j-y_k|}\Big) 
\right)_{jk}  + O(\lam^4 g) \notag \\
& =- N g\Big(P - N^{-1}g(\lam)^{-1}\tD + \lam^2 \Gg_1(Y)+ 
\lam^2 g(\lam)^{-1} \Gg_2(Y)+ O(\lam^4) \Big) \notag 
\\ 
& = \colon - N g(\lam) A(\lam),  \lbeq(Gamma-1) 
\end{align} 
where $\tD$ and $\Gg_1(Y)$ are defined in \refeq(deftD) and $\Gg_2(Y)$ in \eqref{horia3}.

We apply \reflm(JN) to { the pair consisting of the operator $A$ appearing in} \refeq(Gamma-1) and 
$S= 1-P$ in the space ${\mathcal H}= \C^N$. 
For { simplicity} we write 
\bqn 
F(\lam)= - N^{-1}g(\lam)^{-1}\tD \,   \lbeq(F) 
\eqn 
so that as $\lam \to 0$ 
\begin{gather*}
A(\lam)+S = 1+ F(\lam) + R_1 + O(\lam^4), \\
R_1=R_1 (\lam, Y)=\lam^2 \Gg_1(Y)+  \lam^2 g(\lam)^{-1} \Gg_2(Y)
\big(=O(\lam^2)\big)\, .
\end{gather*}
{ For small $0 <\lam <\lam_0$, the inverse} $(1+ F)^{-1}$ exists  
and 
\bqn \lbeq(evid)
(1+ F)^{-1}= 1- F + \cdots + (-F)^{n-1} +O(g^{-n}), \quad \lam \to 0  .
\eqn 
{Moreover,} $A(\lam)+S$ is invertible and 
\begin{align}
(A+S)^{-1} & = 
(1 + F)^{-1} \big(1+ (R_1+ O(\lam^4))(1 + F)^{-1}\big)^{-1} \notag \\
& =(1 + F)^{-1} - (1 + F)^{-1}R_1(1 + F)^{-1} + O(\lam^4) \notag \\
& = (1 + F)^{-1} - R_2 + O(\lam^2 {g}(\lam)^{-2}), \lbeq(as) 
\end{align}
where 
\bqn \lbeq(R2)
R_2= \lam^2 \Gg_1(Y) + \lam^2 g^{-1} (N^{-1}\Gg_1(Y)\tD 
+ N^{-1}\tD \Gg_1(Y) + \Gg_2(Y)) \big(=O(\lam^2)\big)  .
\eqn 
From \refeq(as) we have 
\bqn  %egin{align}
B  =  S(1 -(A+S)^{-1})S 
 = S\Big(F(1+F)^{-1}+ R_2 +  O(\lam^2 {g}^{-2}) \Big)S .
 \lbeq(B2)
\eqn %nd{align}

\subsection{The case {when} $S\tD S$ is invertible in $S\C^N$ }
Suppose that $S\tD S$ is invertible in $S\C^N$. 
Since 
$$
F(1+F)^{-1}= F-F^2+ F^3 - \cdots = -N^{-1}g^{-1}\tD + O(g^{-2}),
$$
by absorbing $R_2 +  O(\lam^2 {g}^{-2})$ of \refeq(B2) 
into $O(g^{-2})$ we have:  
\bqn %egin{align} 
B  = S\Big( -N^{-1}g^{-1}[S{\tD} S]  + O(g^{-2}) \Big)S. 
\lbeq(B2-a)
\eqn%nd{align} 
{ Thus, } $B$ is invertible in $S\C^N$ for small $|\lam|>0$ and  
\bqn  \lbeq(binverse)
B^{-1}=  - Ng(\lam) S [S{\tD} S]^{-1}S + SO(1)S.
\eqn   
Combining \refeq(JN-1), \refeq(as) and \refeq(binverse), we see that 
\begin{multline}
A^{-1}= (1+ F)^{-1} \\
-  N g(1+ F)^{-1}S\Big([S\tD S]^{-1}+O(g^{-1})\Big) S(1+ F)^{-1}+ O(g\lam^2).   \lbeq(A-firstcase)
\end{multline}

\begin{lemma}\lblm(5-1) 
$\Gamma(\lam)^{-1}$ is bounded as $\lam \to 0$ 
if and only if $ S{\tD} S $ is non-singular in $S \C^N$. 
In this case,  
\begin{align} 
& \Ga(\lam)^{-1} = -N^{-1}g^{-1} (1+ F)^{-1} \notag \\
& \quad + (1+ F)^{-1}S\Big([S\tD S]^{-1}+
O(g^{-1})\Big)S(1+ F)^{-1}+ O(\lam^2)  \lbeq(G1) \\
& \quad = [S{\tD} S]^{-1}+ O(g^{-1}).  \lbeq(GAM-inverse)
\end{align}
There exists a constant $\lam_0>0$ such that all 
entries of $\Ga^{-1}(\lam)$ 
satisfy 
\bqn \lbeq(8-1)
\pa_\lam^{\ell} [\Ga^{-1}(\lam)]_{jk} \absleq C \lam^{-\ell}, 
\quad \ell=0,1,\dots \quad 0<|\lam|<\lam_0.  
\eqn 
\end{lemma}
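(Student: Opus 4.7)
The statement has three parts: the equivalence between boundedness of $\Ga^{-1}$ and invertibility of $S\tD S$ on $S\C^N$, the asymptotic formulas \refeq(G1)--\refeq(GAM-inverse), and the derivative bound \refeq(8-1). All three rest on the factorization $\Ga(\lam)=-Ng(\lam) A(\lam)$ in \refeq(Gamma-1) and on \reflm(JN) applied to $A$ with projection $S=1-P$, so that the analysis is reduced to that of $B=S-S(A+S)^{-1}S$ on $S\C^N$, already carried out in \refeq(as)--\refeq(B2-a).

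Sufficiency of the condition, together with the explicit formulas \refeq(G1) and \refeq(GAM-inverse), is essentially contained in the preceding paragraphs. If $S\tD S$ is non-singular on $S\C^N$, \refeq(B2-a) shows $-gB\to N^{-1}[S\tD S]$, so $B^{-1}$ exists for small $|\lam|>0$ and obeys \refeq(binverse). Substituting \refeq(binverse) and the expansion \refeq(as) of $(A+S)^{-1}$ into \refeq(JN-1) yields \refeq(A-firstcase); multiplication by $-(Ng)^{-1}$ then gives \refeq(G1). Formula \refeq(GAM-inverse) is read off from \refeq(G1) via $(1+F)^{-1}=1+O(g^{-1})$ (see \refeq(evid)): the first term is $O(g^{-1})$, the second collapses to $S[S\tD S]^{-1}S+O(g^{-1})=[S\tD S]^{-1}+O(g^{-1})$ once $[S\tD S]^{-1}$ is extended by $0$ on $P\C^N$, and the $O(\lam^2)$ remainder is absorbed into $O(g^{-1})$.

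For the converse we argue by contrapositive. Suppose $\mathrm{Ker}_{S\C^N}(S\tD S)\neq\{0\}$ and let $v$ be a unit vector in this kernel. From \refeq(B2) one has $B=-N^{-1}g^{-1}(S\tD S)+E(\lam)$ with $\|E(\lam)\|\leq C(|g|^{-2}+\lam^2)$, so that $Bv=Ev$ and $\|Bv\|\leq C(|g|^{-2}+\lam^2)$. Decomposing the solution $w=B^{-1}v$ of $Bw=v$ as $w=w_0+w_\perp$ along $S\C^N=\mathrm{Ker}(S\tD S)\oplus \mathrm{Ran}(S\tD S)$ and projecting $Bw=v$ onto each summand, one uses the invertibility of $(S\tD S)$ restricted to its range, together with $|g|(|g|^{-2}+\lam^2)=|g|^{-1}+|g|\lam^2=o(1)$ as $\lam\to 0$, to obtain first $\|w_\perp\|\leq o(1)\|w\|$ and then $\|w_0\|\geq c(|g|^{-2}+\lam^2)^{-1}$. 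Hence $\|B^{-1}\|\geq c(|g|^{-2}+\lam^2)^{-1}$, and substituting this into \refeq(JN-1) (where $(A+S)^{-1}$ is uniformly bounded near $\lam=0$) gives $\|A^{-1}\|\geq c(|g|^{-2}+\lam^2)^{-1}$, so that
\[
\|\Ga(\lam)^{-1}\|=(N|g|)^{-1}\|A^{-1}\|\geq \frac{c}{|g|^{-1}+|g|\lam^2}\longrightarrow +\infty \quad (\lam\to 0),
\]
contradicting the boundedness of $\Ga^{-1}$.

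The derivative bound \refeq(8-1) follows by differentiating \refeq(G1) term by term, which is legitimate under the convention \refeq(asympto). Each $\lam$-derivative of $g(\lam)$ produces at most a factor $\lam^{-1}$, since $\pa_\lam^\ell g=O(\lam^{-\ell})$ for $\ell\geq 1$; combined with the boundedness of $\Ga^{-1}$ itself (the $\ell=0$ case already handled), this yields $|\pa_\lam^\ell [\Ga^{-1}]_{jk}|\leq C\lam^{-\ell}$ for $0<|\lam|<\lam_0$. The main obstacle in the above plan is the necessity direction: one needs the explicit kernel/range decomposition of $w=B^{-1}v$ in order to rule out cancellations that would otherwise destroy the blow-up of $\|B^{-1}\|$ when it is transferred through \refeq(JN-1) to $\|\Ga^{-1}\|$.
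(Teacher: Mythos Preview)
Your treatment of the ``if'' part, the formulas \refeq(G1)--\refeq(GAM-inverse), and the derivative bound \refeq(8-1) matches the paper's proof essentially verbatim: one feeds \refeq(binverse) and \refeq(as) into \refeq(JN-1), multiplies by $-(Ng)^{-1}$, and then differentiates termwise using the convention \refeq(asympto).

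The difference lies in the ``only if'' direction. The paper does \emph{not} prove it inside \reflm(5-1); it explicitly postpones it (``The proof of the `only if' part of \reflm(5-1) can be completed only when we finish proving all other lemmas in this section''). The paper's route is to exhaust the singular cases in \reflms(5-2,5-4) and \reflm(5-5), where the leading blow-up of $\Ga(\lam)^{-1}$ is computed explicitly in each sub-case (log, $\lam^{-2}g^{-1}$, or $\lam^{-2}$). Your route is instead a direct contrapositive: pick $v\in\mathrm{Ker}_{S\C^N}(S\tD S)$, observe $\|Bv\|\leq C(|g|^{-2}+\lam^2)$, and deduce $\|B^{-1}\|\to\infty$.

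Your direct argument is correct, and the ``obstacle'' you flag at the end is not a real one. From $TBw=TEw=v$ one gets $\|w\|\geq c(|g|^{-2}+\lam^2)^{-1}$ immediately, and since $(A+S)^{-1}=1+O(g^{-1})$ is invertible and close to the identity, applying \refeq(JN-1) to the test vector $u=(A+S)v$ gives $A^{-1}u=v+(A+S)^{-1}w$, whose norm is $\geq c\|w\|-1$; no cancellation can occur between a bounded term and a divergent one. So the transfer from $\|B^{-1}\|$ to $\|\Ga^{-1}\|$ is clean. What your approach buys is self-containment: one does not need the full case analysis of the later lemmas just to conclude unboundedness. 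What the paper's approach buys is strictly more information: the precise order and structure of the singularity ($g$, $g^{-1}\lam^{-2}$, or $\lam^{-2}$), together with the identification of the leading rank-one or finite-rank piece, all of which feed directly into \refth(zerolimit).
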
 

\noindent {\it Proof of the "if" part}. We substitute \refeq(A-firstcase) for $A$ in  
$\Ga^{-1}(\lam) = -N^{-1}g^{-1}A^{-1}$ and pick up the leading (bounded) term. Recall that the 
asymptotic expansion for \refeq(asympto) 
may be differentiated term by term and 
\[
(d/d\lam)^j O(g^{-1})\absleq C \lam^{-j}, \quad j=0,1, \dots.
\]
This proves \refeq(8-1) by virtue of \refeq(GAM-inverse).

{ The proof of the "only if" part of \reflm(5-1) can be completed only when 
 we finish  proving all other lemmas in this section.}

As we shall see in Section \ref{section4}, \refeq(8-1) is sufficient for 
studying the $L^p$ boundedness of the wave operators, however, 
we need { the} more detailed structure \refeq(G1) and 
\refeq(GAM-inverse) in Section \ref{section3} for studying the behavior of 
$(H_{\a,Y}-\lam^2)^{-1}$ as $\lam \to  0$. 

\subsection{The case { when} $S \tD S$ is singular in $S\C^N$}
Suppose that the leading term $S \tD S$ in \refeq(B2-a) is 
singular in $S\C^N$. 
We expand $(1+F)^{-1}$ in \refeq(B2) up to 
the order $O(g^{-3})$ as 
\bqn 
F(1+F)^{-1}=- N^{-1} g^{-1}\tD + N^{-2}g^{-2}\tD^2 - 
N^{-3}g^{-3}R_3, \quad R_3=O(1) , \lbeq(R3)
\eqn 
where the defnition of $R_3=R_3(\lam)$ should be obvious 
(see \refeq(evid)). { We introduce the operator}    
\bqn \lbeq(atD)
B= - N^{-1} g^{-1}
(S{\tD}S -  N^{-1}g^{-1} S{\tD}^2 S  + S(N^{-2}g^{-2}R_3 + Ng R_2) S) 
\eqn 
(recall { from} \refeq(R2) that $R_2=O(\lam^2)$).  { In order to find} the small 
$\lam$ behaviour of $B^{-1}$ in $S\C^N$ we { introduce the pair $(A_1, T)$ where} 
\bqn 
A_1= S{\tD}S +  N^{-1}g^{-1} S{\tD}^2 S  - S(N^{-2}g^{-2}R_3 + Ng R_2) S  \lbeq(A1t)
\eqn
and $T$ is the orthogonal projection in $S\C^N$ onto ${\rm Ker}\, S\tD S$.   
{ Then we again} apply \reflm(JN) to the the pair  $(A_1, T)$. 
Since  
$(S{\tD}S+ T)^{-1}$ is invertible in $S\C^N$ and $A_1= S{\tD}S+ O(g(\lam)^{-1})$,   
then $A_1+ T$ is also invertible for small $0<|\lam|$ and 
\begin{align*} 
& (A_1+ T)^{-1} \notag \\
& = (S{\tD} S + T)^{-1} \big(1+ 
(N^{-1}g^{-1} S{\tD}^2S +SO(g^{-2})S) (S{\tD} S + T)^{-1} \big)^{-1} \notag \\
&= (S{\tD} S + T)^{-1}% \notag \\ & \qquad 
- N^{-1}g^{-1} (S{\tD} S + T)^{-1}S{\tD}^2S (S{\tD} S + T)^{-1}+ SO(g^{-2})S. %\lbeq(singulat-case)
\end{align*} 
Since $TS=ST=T$ and $T(S{\tD} S+ T)^{-1}= (S{\tD} S+ T)^{-1}T = T$, 
the operator $B_1$ which corresponds to $B$ when $A_1=A$ satisfies   
\bqn \lbeq(C-1)
B_1 \colon= T - T(A_1+T)^{-1}T 
= N^{-1} g^{-1} T {\tD}^2 T + T O(g^{-2})T .
\eqn 
Here we state the following  lemma:

\begin{lemma} \lblm(td2t) 
The matrix $T {\tD}^2 T$ { has} rank $1$. 
It is singular in $T\C^N$ if and only if 
$\tD T=T\tD=0$ and, if it is non-singular, {then} $\dim T\C^N = 1$. 
\end{lemma}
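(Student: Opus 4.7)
My plan is to exploit the defining property of $T$: by construction every $v\in T\C^N$ satisfies $Sv=v$ and $S\tD v=0$, which forces $\tD v\in \ker S=\C\eb$. Writing this identity as $\tD v=c(v)\,\eb$, the scalar $c(v)=\la\eb,\tD v\ra=\la\tD\eb,v\ra$ is a linear functional on $T\C^N$ (using that $\tD$ is real and symmetric). Applying $\tD$ once more and projecting with $T$ then yields the key identity
\[
T\tD^2 Tv=c(v)\,T\tD\eb=\la\mathbf{h},v\ra\,\mathbf{h},\qquad \mathbf{h}:=T\tD\eb,
\]
showing that ${\rm ran}\,T\tD^2 T\subseteq \C\mathbf{h}$ and hence ${\rm rank}\,T\tD^2 T\leq 1$.

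Next I would establish the equivalence of the three vanishing conditions $\tD T=0$, $T\tD=0$ and $T\tD^2 T=0$. The identity $(T\tD)^\ast=\tD T$ handles the first equality. For the third, the self adjoint calculation
\[
\la v,T\tD^2 Tv\ra=\la Tv,\tD^2 Tv\ra=\|\tD Tv\|^2
\]
shows that vanishing of the quadratic form of the self adjoint operator $T\tD^2 T$ forces $\tD Tv=0$ on all of $\C^N$; the converse $\tD T=0\Rightarrow T\tD^2T=0$ is trivial. In particular $\mathbf{h}\neq 0$ precisely when $\tD T\neq 0$.

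For the dimension statement, non-singularity of $T\tD^2 T$ on $T\C^N$ means it is invertible there, so its rank equals $\dim T\C^N$; the rank bound then forces $\dim T\C^N\leq 1$, and invertibility excludes the trivial case, giving $\dim T\C^N=1$.

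The main obstacle I expect concerns the sharper reading that singularity of $T\tD^2 T$ on $T\C^N$ already forces $T\tD^2 T=0$ (and hence $\tD T=0$), as used implicitly in part (2.b) of the main theorem. The steps above cover this automatically when $\dim T\C^N\leq 1$, but in the case $\dim T\C^N\geq 2$ a rank one yet singular configuration is a priori possible and ruling it out seems to require the explicit algebraic form of the entries of $\tD$ in \refeq(deftD), specifically the log/$\a_j$ structure, to show that a direction $v\in T\C^N$ with $\tD v\neq 0$ cannot coexist with an independent direction $w\in T\C^N\cap \ker\tD$.
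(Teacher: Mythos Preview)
Your coordinate-free argument is essentially the paper's proof in different notation. The paper works in a basis $\{\eb, \eb_2, \dots, \eb_N\}$ adapted to $\C^N = \la \eb\ra \oplus S\C^N$, writes $\tD$ as a block matrix $\begin{pmatrix} a & {}^t{\bf a} \\ {\bf a} & K\end{pmatrix}$ with $K$ the matrix of $S\tD S$, squares it, and uses $TK = 0$ to obtain $T\tD^2 T = T{\bf a}\otimes T{\bf a}$; your $\mathbf{h} = T\tD\eb$ is precisely $T{\bf a}$ up to normalization. Both routes give rank at most one, the equivalence $T\tD^2 T = 0 \Leftrightarrow \tD T = 0$ via $\la v, T\tD^2 T v\ra = \|\tD Tv\|^2$, and the conclusion ``non-singular $\Rightarrow \dim T\C^N = 1$'' in the same way.

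Your worry in the final paragraph is well-founded, and the paper's proof does not resolve it either: it simply asserts that singularity of $T\tD^2 T$ in $T\C^N$ forces $T\tD^2 T = 0$. Your hope that the explicit $\log$/$\alpha_j$ structure of $\tD$ might exclude the problematic configuration is not borne out --- for $N=3$ the parameters $\alpha_j$ and $|y_j - y_k|$ allow $\tD$ to be an arbitrary real symmetric matrix, and taking $\tD = \eb\otimes{\bf a} + {\bf a}\otimes \eb$ with a small nonzero ${\bf a}\perp \eb$ yields $S\tD S = 0$, $T = S$, $\dim T\C^N = 2$, $T\tD^2 T = {\bf a}\otimes{\bf a}$ singular yet nonzero, and $\tD T\neq 0$. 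So neither argument establishes the literal claim ``singular $\Leftrightarrow \tD T = 0$''; what both actually prove is the weaker package (rank $\leq 1$; $T\tD^2 T=0 \Leftrightarrow \tD T = 0$; non-singular $\Rightarrow \dim T\C^N = 1$), and you have been more careful than the paper in flagging the distinction.
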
 
\begin{proof} Choose an orthonormal basis 
$\{\eb_2, \dots, \eb_N\}$ of $S\C^N$ 
so that $\{\eb, \eb_2, \dots, \eb_N\}$ is an orthonormal basis of 
$\C^N = \la \eb \ra \oplus S\C^N$, where $\la \eb \ra$ { denotes} the linear span of $\eb$. { Let $K$ denote the matrix of $S{\tD} S$. }Let 
\bqn \lbeq(tgeb)
{\mathcal{M}} = \begin{pmatrix} a & {}^t{\bf a} \\
{\bf a } &   K \end{pmatrix}
\eqn 
be the block matrix representation of $\tD$ in this basis with respect to the decomposition 
$\C^N = \la \eb \ra \oplus S\C^N$. Then
\[
{\mathcal{M}}^2 = \begin{pmatrix} a^2 + |{\bf a}|^2  & a {}^t{\bf a} + {}^t{\bf a} K \\
a {\bf a } + K {\bf a} &  {\bf a}\otimes {\bf a} +  K^2  \end{pmatrix}.
\]
{ We identify $T$ with its matrix.} Since $T$ {projects} onto the kernel 
of $S\tD S$ in $S\C^N$, 
$T {\tD}^2 T$ has the matrix representation $
T{\bf a} \otimes T{\bf a}$
with respect to the basis $\{\eb_2, \dots, \eb_N\}$ { and has}  rank $1$. 
It follows that $T {\tD}^2 T$ is singular in $T\C^N$ if and only if 
$T {\tD}^2 T= (\tD T)^\ast (\tD T)= 0$ or $\tD T = T \tD= 0$ 
and if it is non-singular, it must be that  $\dim T\C^N=1$. 
\end{proof} 

\paragraph{ {\bf (a) The case { when} $T {\tD}^2 T$ is non-singular in $T\C^N= {\rm Ker}\, S\tD S$.} }

 If $T {\tD}^2 T$ is non-singular in $T\C^N$, then $\dim T\C^N=1$ by 
virtue of \reflm(td2t); equation \refeq(C-1) implies that $B_1$ is invertible 
in $T\C^N$ and 
\bqn \lbeq(tbt1)
B_1^{-1}=  N g T [T {\tD}^2 T]^{-1} T + O(1) .
\eqn 
Then, by virtue of \reflm(JN), $A_1$ is invertible in $S\C^N$ and 
\bqn \lbeq(tbt2)
A_1^{-1}= (A_1+ T)^{-1}+(A_1+ T)^{-1}TB_1^{-1}T (A_1+ T)^{-1}.
\eqn 
Since $(A_1+ T)^{-1}$ is bounded as $\lam \to 0$ and 
$(A_1+T)^{-1}T =T(A_1+T)^{-1}= T + O(g^{-1})$, we conclude 
from \refeq(tbt1) and \refeq(tbt2) that in the space $S\C^N$ 
\bqn \lbeq(bnn)
A_1^{-1} =  N gT [T {\tD}^2 T]^{-1} T +  O(1).
\eqn 
Thus, $B= -N^{-1}g^{-1} A_1$ is invertible in $S\C^N$ and  
\bqn \lbeq(B5)
B^{-1}= -Ng A_1^{-1}= -N^2 g^2 T[T {\tD}^2 T]^{-1}T +  O(g) . 
\eqn 
\begin{lemma} \lblm(5-2) Suppose $S{\tD} S$ is singular in 
the space $S\C^N$ 
and let $T$ be the orthogonal projection onto 
the kernel of $S{\tD} S$ in $S\C^N$. Suppose that $T \tD^2 T$ 
is non-singular in $T\C^N$. Then, $T\tD^2 T$ { has} rank $1$ 
and $\Ga(\lam)^{-1}$ has $\log$ singularities 
as $\lam \to 0$. More precisely,  
\begin{align} 
& \Ga(\lam)^{-1} = 
-N^{-1} g^{-1} (1+ F)^{-1} \notag \\ 
& \quad +  
N g(1+ F)^{-1}\Big( T[T\tD^2 T]^{-1}T+ TO(g^{-1})T\Big) (1+ F)^{-1} 
+ O(\lam^2 g) \lbeq(S1) \\
& \quad =  N g[T\tD^2 T]^{-1} +  O(1)  .    \lbeq(S2-extra)
\end{align}
\end{lemma}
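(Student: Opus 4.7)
The plan is to carry out a nested application of the Jensen–Nenciu reduction (\reflm(JN)), proceeding from what has already been set up for the pair $(A,S)$ down to a second reduction at the pair $(A_1,T)$, and then reassembling everything upward to obtain the asymptotic expansion of $\Ga(\lam)^{-1} = -N^{-1}g^{-1}A^{-1}$.

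First I would exploit the fact that, under the present hypothesis, \reflm(td2t) already forces $\dim T\C^N=1$, so $T\tD^2 T$ is literally an invertible scalar acting on a one-dimensional subspace and $[T\tD^2 T]^{-1}$ extended trivially coincides with $T[T\tD^2 T]^{-1}T$. Starting from \refeq(C-1), $B_1= N^{-1}g^{-1}T\tD^2 T + TO(g^{-2})T$ is then invertible in $T\C^N$ for $\lam$ small, and a geometric series inversion yields $B_1^{-1}= Ng\, T[T\tD^2 T]^{-1}T + O(1)$, which is \refeq(tbt1).

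Next, I would feed this into the Jensen–Nenciu formula \refeq(tbt2) for the pair $(A_1,T)$. The key simplification is that $T(S\tD S + T)^{-1}= T$, so expanding $(A_1+T)^{-1} = (S\tD S+T)^{-1}+O(g^{-1})$ gives $(A_1+T)^{-1}T = T+ O(g^{-1})$ (and likewise on the other side). Plugging this together with the expansion of $B_1^{-1}$ into \refeq(tbt2) kills the cross terms to leading order and delivers $A_1^{-1}= Ng\, T[T\tD^2 T]^{-1}T + O(1)$, i.e.\ \refeq(bnn); since $B=-N^{-1}g^{-1}A_1$, this gives \refeq(B5) for $B^{-1}$.

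Finally, I would pull this information up to $A$ itself by reusing the outer Jensen–Nenciu identity \refeq(JN-1) with the pair $(A,S)$, namely $A^{-1}= (A+S)^{-1}+(A+S)^{-1}SB^{-1}S(A+S)^{-1}$, inserting \refeq(as) for $(A+S)^{-1}$ and the just-derived \refeq(B5) for $B^{-1}$, and using $ST=TS=T$. The $-N^2g^2$ factor in $B^{-1}$ multiplied by the prefactor $-N^{-1}g^{-1}$ produces exactly the $Ng(1+F)^{-1}T[T\tD^2 T]^{-1}T(1+F)^{-1}$ leading term of \refeq(S1); the first term on the right of \refeq(S1) comes from the free $(A+S)^{-1}$ piece; and the error $O(\lam^2 g)$ collects the cross-terms between $R_2=O(\lam^2)$ in \refeq(as) and the $O(g^2)$ term in $B^{-1}$. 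For \refeq(S2-extra), one simply uses $(1+F)^{-1}=1+O(g^{-1})$ so that the leading contribution reduces to $Ng\,[T\tD^2 T]^{-1}+O(1)$.

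The bookkeeping of error terms is the only real delicate point: one must verify that the cross-terms $(1+F)^{-1}S\cdot O(g)\cdot S(1+F)^{-1}$ and the $R_2$-corrections combine at worst to $O(\lam^2 g)$, rather than spoiling the claimed order. This is routine once the expansions of $(A+S)^{-1}$, $F$, and $R_2=O(\lam^2)$ from \refeq(R2) are kept term by term; the structural identity $ST=T$ together with the one-dimensionality of $T\C^N$ ensures no further algebraic cancellations need to be tracked.
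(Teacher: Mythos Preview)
Your proposal is correct and follows essentially the same route as the paper: the nested Jensen--Nenciu reduction first at the pair $(A_1,T)$ to obtain \refeq(tbt1), \refeq(bnn), and \refeq(B5), then at the outer pair $(A,S)$ using \refeq(JN-1) and \refeq(as), finally reading off $\Ga(\lam)^{-1}=-N^{-1}g^{-1}A^{-1}$ via \refeq(Gamma-1). The paper's own proof is just a two-line pointer to \refeq(B5), \refeq(Gamma-1), \refeq(JN-1), \refeq(as) together with $(A+S)^{-1}=(1+F)^{-1}+O(\lam^2)$ and $(1+F)^{-1}=1+O(g^{-1})$, which is exactly the computation you have written out in detail.
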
 

\begin{proof}
We combine \refeq(B5) with \refeq(Gamma-1), \refeq(JN-1) 
and \refeq(as). Recalling that $(A+S)= (1+F)^{-1}+ O(\lam^2)$ 
and $(1+ F)^{-1}= 1+ O(g^{-1})$, we obtain the above result. 
\end{proof}

\paragraph{\bf (b) The case { when} $T {\tD}^2 T$ is singular in $T \C^N $ 
but $T\Gg_1(Y){ T}$ is non-singular} 
If $T {\tD}^2 T$ is singular in $T \C^N $, then, 
by virtue of \reflm(td2t), $T {\tD}= {\tD} T=0$ and 
$R_3 T= T R_3=0$, see \refeq(R3) for the definition of $R_3$. 
Define 
\[
L= S{\tD}S +  N^{-1}g^{-1} S{\tD}^2 S  - SN^{-2}g^{-2}R_3 S, 
\]
so that $TL=LT=0$ and, by virtue of \refeq(A1t), 
\[
A_1= L + SNg R_2 S. 
\]
Recall that $R_2= 
\lam^2 \Gg_1(Y) + \lam^2 g^{-1} (N^{-1}\Gg_1(Y)\tD 
+ N^{-1}\tD \Gg_1(Y) + \Gg_2(Y))$ (see \refeq(R2)).
It follows that in the direct sum decomposition of 
$S\C^N = (1-T)\C^N  \oplus T\C^N$, 
\[
L + T = \begin{pmatrix} L^\perp & 0 \\ 0 &  T \end{pmatrix}= 
L^\perp \oplus T 
\]
where $L^\perp$ is the part of $L$ in $(1-T)\C^N$ and 
\[
(L^\perp)^{-1} =  \{(1-T)S{\tD}S(1-T)\}^{-1} +  O(g^{-1}). 
\]
It follows that 
\begin{align}
& (A_1+ T)^{-1} = (L + T + S Ng R_2 S)^{-1}\notag \\
& = (L^\perp \oplus T)^{-1}  
- (L^\perp \oplus T)^{-1}(S Ng R_2 S) (L^\perp \oplus T)^{-1}  
+ O(\lam^4 g^2 ). \lbeq(byv) 
\end{align}
Define 
\[
R_4(Y)\colon = N^{-1}\Gg_1(Y)\tD + N^{-1}\tD \Gg_1(Y) + \Gg_2(Y).
\]
Then the operator $B_1 = T - T (A_1+ T)^{-1} T $ of 
\refeq(C-1) is given 
as 
\begin{align}  
B_1 & = - T Ng \lam^2 
\Big(\Gg_1(Y) + g^{-1} R_4(Y)+ O(\lam^2 g^2)\Big)T \notag \\
&= - T Ng \lam^2 
\Big(\Gg_1(Y) + g^{-1} \Gg_2(Y) + O(\lam^2 g^2)\Big)T \lbeq(lastB)
\end{align} 
where we used \refeq(R2), the identity 
$T(L^\perp \oplus T)^{-1}={\bf 0}\oplus {1}$ and that 
$\tD T = T \tD=0$ in the final step. 

If $T\Gg_1(Y)T$ is non-singular in $T\C^N$, we see from \refeq(lastB) 
that $B_1$ is invertible in $T\C^N$ for small $0<|\lam|$ and 
\[
B_1^{-1}= - N^{-1}g^{-1}\lam^{-2}T[T\Gg_1(Y) T]^{-1}T+ 
 O(\lam^{-2}g^{-1}) . 
\]
{ We again apply} \reflm(JN) to $A_1$, $B_1$ and $T$ and 
use \refeq(byv) for $(A_1+T)^{-1}$. Then, since 
$(A_1+ T)^{-1}=O(g(\lam))$  as $\lam \to 0$, 
\begin{align*} 
A_1^{-1}& = (A_1+ T)^{-1} - (A_1+ T)^{-1}T B_1^{-1}T (A_1+ T)^{-1}
\\
& = N^{-1}g^{-1}\lam^{-2}
T[T\Gg_1(Y) T]^{-1}T+  O(\lam^{-2}g^{-1}) 
\end{align*}
and  
\[
B^{-1}= -N g(\lam) A_1^{-1}= 
- \lam^{-2}T[T\Gg_1(Y) T]^{-1}T + S O(\lam^{-2}g^{-1})S. 
\]
By using \refeq(as), we have proved the following lemma: 

\begin{lemma} \lblm(5-4) Suppose that $S{\tD} S$ is singular in the space $S\C^N$ 
and $T \tD^2 T=0$. Suppose further that $T \Gg_1(Y)T$ is non-singular in $T\C^N$. 
Then, as $\lam \to 0$ { we have:}
\begin{align} 
& { \Ga(\lam)^{-1}}= 
- N^{-1} g^{-1} (1+ F)^{-1} +  N^{-1} g^{-1}\lam^{-2} \notag \\
& \times (1+ F)^{-1}S \Big( T[T\Gg_1(Y) T]^{-1}T+ 
O(g^{-1})\Big) S(1+ F)^{-1} 
+ O(g^{-1}) \lbeq(S1a) \\
&= N^{-1} g^{-1}\lam^{-2}T[T\Gg_1 (Y) T]^{-1}T + O(\lam^{-2}) ,   
\lbeq(S2a)
\end{align}
where we wrote $ST= T$ for simplicity in \refeq(S2a). 
\end{lemma}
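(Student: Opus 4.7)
The plan is to complete the iterated Jensen--Nenciu reduction begun earlier in the section. The first reduction related $A(\lam)^{-1}$ to $B(\lam)^{-1}$ on $S\C^N$, and the introduction of $A_1$ and of the orthogonal projection $T$ onto $\ker_{S\C^N} S\tD S$ further reduced matters to the analysis of $B_1 = T - T(A_1+T)^{-1}T$ on $T\C^N$. Under the lemma's hypothesis $T\tD^2 T = 0$, \reflm(td2t) gives $T\tD=\tD T=0$, and since $R_3$ is built from powers of $\tD$ of order $\geq 3$ one also has $TR_3 = R_3 T = 0$. Therefore the operator $L = S\tD S + N^{-1}g^{-1}S\tD^2 S - SN^{-2}g^{-2}R_3 S$ satisfies $LT=TL=0$, and $L+T$ splits as $L^\perp \oplus T$ on $(1-T)S\C^N \oplus T\C^N$, with $L^\perp$ invertible because its leading part $(1-T)S\tD S(1-T)$ is non-singular on $(1-T)S\C^N$.

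The next step is to expand $(A_1+T)^{-1} = (L+T + S NgR_2 S)^{-1}$ as a Neumann series in the small perturbation $NgR_2 = O(\lam^2 g)$, keeping terms through second order (which is $O(\lam^4 g^2)$). Sandwiching with $T$, using $T(L+T)^{-1} = (L+T)^{-1}T = T$ (since $(L+T)^{-1}$ acts as the identity on $T\C^N$), the contribution of the leading two terms is $T - NgT R_2 T$. Using $T\tD=\tD T=0$, all $\tD$-containing pieces of $R_2$ drop out and $TR_2 T = \lam^2 T\Gg_1(Y)T + \lam^2 g^{-1}T\Gg_2(Y)T$. Hence on $T\C^N$,
\[
B_1 = -Ng\lam^2 T\bigl(\Gg_1(Y) + g^{-1}\Gg_2(Y)\bigr)T + O(\lam^4 g^2),
\]
which is formula \refeq(lastB).

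By the standing hypothesis that $T\Gg_1(Y)T$ is non-singular on $T\C^N$, the leading term dominates for small $|\lam|>0$ and one inverts to get $B_1^{-1} = -(Ng\lam^2)^{-1}T[T\Gg_1(Y)T]^{-1}T + O(\lam^{-2}g^{-1})$. Feeding this into the Jensen--Nenciu formula \refeq(JN-1) applied to the pair $(A_1,T)$ and using $(A_1+T)^{-1}T = T + O(\lam^2 g)$ gives the dominant behaviour $A_1^{-1} = T B_1^{-1} T + O(\lam^{-2}g^{-1})$. Multiplying by $-Ng$ produces $B^{-1} = -\lam^{-2}T[T\Gg_1(Y)T]^{-1}T + SO(\lam^{-2}g^{-1})S$. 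A second application of \reflm(JN), this time to the original pair $(A,S)$, combined with $(A+S)^{-1} = (1+F)^{-1} + O(\lam^2)$ from \refeq(as), yields $A^{-1}$; then $\Ga(\lam)^{-1} = -N^{-1}g^{-1}A^{-1}$ produces \refeq(S1a). The simpler form \refeq(S2a) follows by observing that $(1+F)^{-1}S = S + O(g^{-1})$ and $ST=T$, so the outer conjugations by $(1+F)^{-1}$ reduce to the identity modulo terms that are absorbed into $O(\lam^{-2})$.

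The main obstacle is the careful bookkeeping of remainder orders across two nested applications of Jensen--Nenciu, with two competing small scales: the logarithmically divergent $g(\lam)$ and the polynomially vanishing $\lam^2$. In particular one must truncate the Neumann series for $(A_1+T)^{-1}$ precisely far enough that (i) the $TR_2T$ contribution is correctly identified as the leading part of $B_1$, and (ii) the residual $O(\lam^4 g^2)$ is genuinely smaller than the dominant $O(\lam^2 g)$ term; simultaneously one must check that the $O(g^{-1})$ errors from $(1+F)^{-1}$ and the $O(\lam^2)$ errors from $(A+S)^{-1}$ combine to produce exactly the $O(g^{-1})$ and $O(\lam^{-2})$ remainders stated in \refeq(S1a) and \refeq(S2a), rather than something larger.
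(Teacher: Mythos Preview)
Your proposal is correct and follows essentially the same route as the paper: you perform the iterated Jensen--Nenciu reduction exactly as the paper does, introducing $L$ with $LT=TL=0$, expanding $(A_1+T)^{-1}$ as a Neumann series in $NgR_2$, identifying $B_1$ via \refeq(lastB), inverting it under the non-singularity of $T\Gg_1(Y)T$, and then unwinding through $A_1^{-1}$, $B^{-1}=-NgA_1^{-1}$, and finally $\Ga(\lam)^{-1}=-N^{-1}g^{-1}A^{-1}$ using \refeq(as). The only difference is presentational: the paper records the intermediate step $(A_1+T)^{-1}=(L^\perp\oplus T)^{-1}-(L^\perp\oplus T)^{-1}(SNgR_2S)(L^\perp\oplus T)^{-1}+O(\lam^4g^2)$ explicitly (your \refeq(byv)), whereas you summarize this as ``Neumann expansion through second order'', but the content is identical.
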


\paragraph{\bf (c) The case {when both} $T\tD^2 T$ and $T\Gg_1(Y)T$ are 
 singular in $T \C^N $} 
 
 We note the identity 
\bqn \lbeq(tgg2)
\Gg_2(Y) = \frac1{2\pi} \Gg_1(Y) + \tilde{\Gg}_2(Y), \; {\rm where}\;  
\tilde{\Gg}_2(Y) = \Big(\frac{\hat{\d}_{jk}} {8\pi N}
|y_j-y_k|^2 \log |y_j-y_k| \Big). 
\eqn 

\begin{lemma} \lblm(gGY2) Suppose { that both} $T\tD^2 T$ and $T\Gg_1(Y)T$ 
are singular. Let $T_1$ be the orthogonal projection onto 
${\rm Ker}\, T \Gg_1(Y)T$ in $T \C^N $. 
Then both $T_1 \Gg_2(Y)T_1$ and $T_1 \tilde{\Gg}_2(Y)T_1 $ are 
non-singular in $T_1\C^N$. 
\end{lemma}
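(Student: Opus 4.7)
The plan is to reduce the two non-singularity claims to a single one, identify $T_1\C^N$ as the space of zero-mode coefficients for $H_{\a,Y}$, and then recognise $N\la\mathbf a,\tilde{\Gg}_2(Y)\mathbf a\ra$ as the squared $L^2$-norm of the associated zero mode. First I would observe that $T_1$, being the orthogonal projection onto ${\rm Ker}\, T\Gg_1(Y)T$ inside $T\C^N$, satisfies $T_1\Gg_1(Y)T_1 = 0$; together with identity \refeq(tgg2) this gives $T_1\Gg_2(Y)T_1 = T_1\tilde{\Gg}_2(Y)T_1$, so it suffices to prove that one of the two is invertible on $T_1\C^N$.

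Next I would characterise $T_1\C^N$ concretely. A direct computation for $\mathbf a\in S\C^N$ (i.e.\ $\sum_j a_j=0$) yields
\[
\la \mathbf a, \Gg_1(Y)\mathbf a\ra \;=\; -\tfrac{1}{4N}\sum_{j,k}a_j a_k |y_j-y_k|^2 \;=\; \tfrac{1}{2N}\Big|\sum_j a_j y_j\Big|^2,
\]
so $\Gg_1(Y)$ is positive semi-definite on $S\C^N$ with kernel $\{\sum_j a_j y_j=0\}$. Since $T\tD^2 T$ is singular by hypothesis, \reflm(td2t) gives $\tD T=T\tD=0$, hence
\[
T_1\C^N \;=\; \Big\{\mathbf a\in\C^N:\; \tD\mathbf a=0,\; \sum\nolimits_j a_j=0,\; \sum\nolimits_j a_j y_j=0\Big\}.
\]
By Proposition~\ref{lemahoria1} this is exactly the set of coefficients yielding a zero mode $\psi_{\mathbf a}(x) = -(2\pi)^{-1}\sum_j a_j\log|x-y_j|\in L^2(\R^2)$ of $H_{\a,Y}$, with $\psi_{\mathbf a}\ne 0$ whenever $\mathbf a\ne 0$.

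The crux is then computing $\|\psi_{\mathbf a}\|_{L^2}^2$ via Parseval. From $-\lap\psi_{\mathbf a}=\sum_j a_j\delta_{y_j}$ I read off
\[
\widehat{\psi_{\mathbf a}}(\xi) \;=\; \tfrac{1}{2\pi|\xi|^2}\sum_j a_j e^{-i\xi\cdot y_j},
\]
which belongs to $L^2(\R^2)$ because the numerator vanishes to order two at $\xi=0$ by the two moment conditions. Plancherel then gives $\|\psi_{\mathbf a}\|^2 = (4\pi^2)^{-1}\int_{\R^2}|\sum_j a_j e^{-i\xi\cdot y_j}|^2|\xi|^{-4}d\xi$. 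To put this into the desired form I would expand the modulus squared as $\sum_{j,k}a_j a_k e^{-i\xi\cdot(y_j-y_k)}$, regularise by cutting off to $|\xi|\ge\delta$, pass to polar coordinates via $\int_{S^1}e^{-i\rho\omega\cdot z}d\omega=2\pi J_0(\rho|z|)$, and use $J_0(u)=1-u^2/4+O(u^4)$. The pieces diverging as $\delta\to 0$ carry prefactors $(\sum_j a_j)^2$ and $\sum_{j,k}a_j a_k|y_j-y_k|^2 = -2|\sum_j a_j y_j|^2$, both vanishing on $T_1\C^N$; the surviving finite part is
\[
\|\psi_{\mathbf a}\|_{L^2}^2 \;=\; \tfrac{1}{8\pi}\sum_{j\ne k}a_j a_k|y_j-y_k|^2\log|y_j-y_k| \;=\; N\,\la \mathbf a,\tilde{\Gg}_2(Y)\mathbf a\ra.
\]

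Since $\|\psi_{\mathbf a}\|^2 > 0$ for $\mathbf a\ne 0$, the form $\la\cdot,\tilde{\Gg}_2(Y)\cdot\ra$ is strictly positive on $T_1\C^N$, so $T_1\tilde{\Gg}_2(Y)T_1$, and hence $T_1\Gg_2(Y)T_1$, is non-singular. The step I expect to require the most care is the last Parseval computation: one must verify that precisely the log-weighted sum survives in the limit $\delta\to 0$, which is equivalent to recognising $(8\pi)^{-1}|x|^2(\log|x|-1)$ as a fundamental solution of $\lap^2$ and to checking that the biharmonic-polynomial ambiguity in that choice is annihilated by the moment conditions $\sum_j a_j = \sum_j a_j y_j = 0$.
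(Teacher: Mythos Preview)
Your argument is correct and takes a genuinely different route from the paper. Both proofs begin with the same reduction: since $T_1\Gg_1(Y)T_1=0$ (as $T_1\leq T$ and $T_1$ annihilates $T\Gg_1(Y)T$), identity \refeq(tgg2) gives $T_1\Gg_2(Y)T_1=T_1\tilde{\Gg}_2(Y)T_1$, so only one non-singularity claim needs to be proved. The paper then introduces the auxiliary function $F(\lam)=\sum_{j\neq k}f_jf_k|y_j-y_k|^2\log(|y_j-y_k|^2+\lam)$, shows $F(\lam)\to 0$ as $\lam\to\infty$ via the constraint $\sum_{j,k}f_jf_k|y_j-y_k|^2=0$, and proves $F'(\lam)<0$ for $\lam>0$ through a Gaussian integral representation; hence $F(0)>0$, which is positivity of the quadratic form. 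Your approach instead identifies $T_1\C^N$ (using \reflm(td2t) and the elementary computation of $\la\ab,\Gg_1(Y)\ab\ra$) with the coefficient space of zero modes and shows that $N\la\ab,\tilde{\Gg}_2(Y)\ab\ra=\|\psi_{\ab}\|_{L^2}^2$ via Plancherel and the biharmonic Green identity; positivity is then immediate since the functions $\log|x-y_j|$ are linearly independent. Your Parseval computation is sound: the $\delta^{-2}$ and $\log\delta$ divergences are carried exactly by $(\sum_j a_j)^2$ and $\sum_{j,k}a_ja_k|y_j-y_k|^2$, both vanishing on $T_1\C^N$, and the residual constant multiplying $|y_j-y_k|^2$ (coming from the choice of biharmonic fundamental solution) drops out for the same reason. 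The payoff of your approach is conceptual: it reveals $N\,T_1\tilde{\Gg}_2(Y)T_1$ as the Gram matrix of the zero modes in $L^2(\R^2)$, which explains \emph{why} the matrix must be invertible and ties in directly with Proposition~\ref{lemahoria1} and part~(2)(c) of \refth(zerolimit). The paper's approach is more self-contained at the level of Section~\ref{section2}, as it does not need to anticipate the zero-mode characterisation; note, however, that you only use the elementary fact that $\psi_{\ab}\in L^2\setminus\{0\}$ when $\ab$ satisfies the two moment conditions, so no circularity arises.
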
 
\begin{proof} Define the $N \times N$ matrix $M$ by 
\[
M= \big(\hat{\d}_{jk}|y_j-y_k|^2 \log(|y_j-y_k|^2)\big).
\] 
Due to the presence of $T_1$, it suffices to show that  
$T_1 M T_1$ is non-singular in $T_1\C^N$. 
Because all the matrices we worked with until now were 
real and symmetric, 
we may choose their eigenvectors to be real. 
Thus the matrices of $T_1$ and of 
$T_1MT_1$ are also real and self-adjoint. Hence we can 
choose the eigenvectors of 
$T_1MT_1$ to be real. Let ${f}=T_1 f$ be a normalized real 
eigenvector of $T_1MT_1$ associated with the smallest eigenvalue. 
We show that necessarily $\la f,Mf \ra >0$, hence 
$T_1MT_1$ is positive definite and non-singular on the range 
of $T_1$. 

For $\fb={}^t(f_1, \dots, f_N) \in T_1\C^N$, define the function  
\[
F(\lambda)=\sum_{1\leq j\not = k \leq N} 
f_jf_k|y_j-y_k|^2 \log(|y_j-y_k|^2+\lambda),\quad \lambda\geq 0.
\]
We want to show that $F(0)>0$. We observe that $F(\lam)$ is smooth for $\lam\geq 0$ 
and that $\lim_{\lambda\to\infty}F(\lambda)=0$ because 
$\sum_{1\leq j\not = k \leq N} f_jf_k|y_j-y_k|^2 = \la \fb, T_1 \Gg_1(Y) T_1 \fb\ra=0 $ 
for $\fb\in T_1\C^n$ and 
\[
F(\lambda)=\sum_{1\leq j\not = k \leq N} 
f_jf_k|y_j-y_k|^2 \big(\log(|y_j-y_k|^2+\lambda) - \log \lam\big). 
\]
{ We will prove that 
$F'(\lambda)< 0 $ for all 
$\lambda >0$, which implies  
$F(0)>\lim_{\lambda\to\infty}F(\lambda)=0$ and finishes the proof. In order to do that, we compute}
$$F'(\lambda)=\sum_{j,k=1}^N f_jf_k\frac{|y_j-y_k|^2}{ |y_j-y_k|^2+\lambda}
=-\sum_{j,k=1}^N f_jf_k\frac{\lambda}{ |y_j-y_k|^2+\lambda}.$$
For $t>0$ we have the identity:
\begin{align*}
e^{-t(|y_j-y_k|^2+\lambda)}=\frac{e^{-\lambda t}}{4\pi t}\int_{\R^2}e^{ip(y_j-y_k)}e^{-p^2/(4t)}dp,
\end{align*}
and also: 
$$\frac{1}{ |y_j-y_k|^2+\lambda}=\lim_{n\to\infty}\int_{n^{-1}}^n e^{-t(|y_j-y_k|^2+\lambda)}dt.$$
Thus, 
$$F'(\lambda)=-\lambda \lim_{n\to\infty} \int_{n^{-1}}^n dt\; \frac{e^{-\lambda t}}{4\pi t}\int_{\R^2}dp\; e^{-p^2/(4t)}\left |\sum_{j=1}^N e^{ip\cdot y_j}f_j\right |^2 \leq 0.$$ 
{ The above inequality is in fact strict for every  $\lam>0$, since  $F'(\lambda)=0$ for some $\lam>0$ would imply} 
$\sum_{j=1}^N e^{ip\cdot y_j}f_j=0$ for all $p\in \R^2$, { which is equivalent with} $f_1= \dots = f_N=0$. 
\end{proof}

For studying $B_1^{-1}$ of \refeq(lastB), we let $A_2$ be the linear operator in 
$T\C^N$ inside the parenthesis 
of \refeq(lastB): 
\bqn 
A_2= T\big( \Gg_1(Y) + g^{-1} \Gg_2 (Y)+ O(\lam^2 g^2)\big)T, 
\eqn 
and apply \reflm(JN) once again to the pair $(A_2, T_1)$. 

{ The inverse} $(A_2+ T_1)^{-1}$ exists in $T\C^N$ for small 
$0<|\lam|$ and, omitting the variable $Y$, 
\begin{multline*}
(A_2+ T_1)^{-1}= (T\Gg_1T+ T_1)^{-1} \\ - 
g^{-1} (T\Gg_1 T+ T_1)^{-1} T\Gg_2(Y) T(T\Gg_1T+ T_1)^{-1} + O(g^{-2}).
\end{multline*}
We need to consider the invertibility of 
\[
B_2= T_1 - T_1 (A_2+ T_1)^{-1} T_1 .
\]
Since $T_1 T\Gg_1 T= T\Gg_1 TT_1=0$ 
and $T_1 (T\Gg_1 T+ T_1)^{-1}= (T\Gg_1 T+ T_1)^{-1}T_1 = T_1$, 
we have 
$B_2 = - g^{-1}T_1 \tilde{\Gg}_2(Y) T_1 + O(g^{-2})$.  
Because $T_1 \tilde{\Gg}_2(Y) T_1$ is non-singular in $T_1 \C^N$ 
by virtue  of \reflm(gGY2), $B_2^{-1}$ exists for small $|\lambda|>0$ and 
\[ 
B_2^{-1}= -g T_1 [T_1 \tilde{\Gg}_2(Y) T_1]^{-1}T_1+ O(1).
\] 
Then, by virtue of \reflm(JN), $A_2^{-1}$ also exists for small $|\lambda|>0$ and 
\begin{align*}
A_2^{-1} 
& = (A_2+ T_1)^{-1}- (A_2+ T_1)^{-1}T_1 B_2^{-1}T_1(A_2+ T_1)^{-1} 
\\
& = g T_1 [T_1 \tilde{\Gg}_2(Y) T_1]^{-1}T_1+ O(1)
\end{align*}
where we used $(A_2+ T_1)^{-1}=O(1)$ and 
$(A_2+ T_1)^{-1}T_1= T_1(A_2+ T_1)^{-1}= T_1 + O(g^{-1})$ 
in the second step.  Thus, we have 
\begin{align}  
B_1^{-1} & = - T^{-1} N^{-1}g(\lam)^{-1}A_2^{-1} \notag \\
& = - T^{-1} N^{-1} \lam^{-2} 
T_1 [T_1 \tilde{\Gg}_2(Y) T_1]^{-1}T_1 T 
+ O(\lam^{-2}g(\lam)^{-1}). \lbeq(lastB-a)
\end{align} 
Then, exactly as in the case (b), we have 
\[
A_1^{-1}= T^{-1} N^{-1} \lam^{-2} 
T_1 [T_1 \tilde{\Gg}_2(Y) T_1]^{-1}T_1 T 
+ O(\lam^{-2}g(\lam)^{-1})
\]
and 
\[
B^{-1}= -N g(\lam)A_1^{-1}= T^{-1} N^{-1} \lam^{-2} 
T_1 [T_1 \tilde{\Gg}_2(Y) T_1]^{-1}T_1 T 
+ O(\lam^{-2}g(\lam)^{-1}).
\]
Repeating the argument in the last part of the proof 
of \reflm(5-1), we prove the following lemma: 

\begin{lemma} \lblm(5-5) 
Suppose that $T {\tD}^2 T=0$ and $T \Gg_1 (Y)T$ are both singular in 
$T \C^N $. Let $T_1$ be the projection to 
${\rm Ker}\, T \Gg_1 (Y)T$ in $T \C^N $. 
Then $T_1 \tilde{\Gg}_2(Y)T_1$ is non-singular 
in $T_1\C^N$ and, as $\lam \to 0$ we have: 
\begin{align} 
{ \Ga(\lam)^{-1}} & = 
- N^{-1} g^{-1} (1+ F)^{-1} -  N^{-1} \lam^{-2} (1+ F)^{-1} \notag \\
& \times S \Big( T_1 [T_1 \tilde{\Gg}_2(Y) T_1]^{-1}T_1 + 
O(g^{-1})\Big)S (1+ F)^{-1} 
+ O(1) \lbeq(S1b) \\
& = - N \lam^{-2}T_1[T_1 \tilde{\Gg}_2(Y) T_1]^{-1}T_1 + 
O(g^{-1} \lam^{-2})   \lbeq(S2b)
\end{align}
where in \refeq(S2b) the first $T_1$ should be considered as a linear map from 
$T_1\C^N$ into $\C^N$ and the last {one} from $\C^N$ into $T_1\C^N$. 
\end{lemma}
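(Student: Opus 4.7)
My plan is to perform a third nested application of \reflm(JN), mirroring the pattern used in the proofs of \reflm(5-2) and \reflm(5-4), and then propagate the resulting asymptotics back through the chain of inversions to reach \refeq(S1b) and \refeq(S2b). The starting point is \refeq(lastB), which under the present hypothesis $T\tD^2 T = 0$ gives $B_1 = -T N g \lam^2 (\Gg_1(Y) + g^{-1}\Gg_2(Y) + O(\lam^2 g^2)) T$. The natural operator to invert inside $T\C^N$ is $A_2 = T(\Gg_1(Y) + g^{-1}\Gg_2(Y) + O(\lam^2 g^2))T$, and the natural projection to pair with it is $T_1$, the orthogonal projection onto $\mathrm{Ker}\, T\Gg_1(Y)T$ in $T\C^N$.

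Applying \reflm(JN) to the pair $(A_2, T_1)$, I would first expand $(A_2 + T_1)^{-1}$ to first order in $g^{-1}$. Since $T_1$ lies in the kernel of the leading term $T\Gg_1(Y)T$, the leading part of $(A_2 + T_1)^{-1}$ acts as $T_1$ on $T_1\C^N$, so that
\[
B_2 \;=\; T_1 - T_1(A_2+T_1)^{-1}T_1 \;=\; -g^{-1} T_1 \Gg_2(Y) T_1 + O(g^{-2}).
\]
Invoking the decomposition \refeq(tgg2) together with the identity $T_1 T\Gg_1(Y) T T_1 = 0$ simplifies this to $B_2 = -g^{-1} T_1 \tilde{\Gg}_2(Y) T_1 + O(g^{-2})$, and \reflm(gGY2) ensures that $T_1 \tilde{\Gg}_2(Y) T_1$ is non-singular on $T_1\C^N$, giving $B_2^{-1} = -g\, T_1 [T_1 \tilde{\Gg}_2(Y) T_1]^{-1} T_1 + O(1)$. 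Using \refeq(JN-1) once more, one recovers $A_2^{-1} = g\, T_1[T_1\tilde{\Gg}_2(Y)T_1]^{-1}T_1 + O(1)$; since $B_1 = -Ng\lam^2 A_2$ on $T\C^N$, extending by zero on the complement yields $B_1^{-1} = -N^{-1}\lam^{-2} T_1[T_1\tilde{\Gg}_2(Y)T_1]^{-1} T_1 + O(\lam^{-2}g^{-1})$. Repeating the step used in the proof of \reflm(5-4) then yields the same leading asymptotics for $A_1^{-1}$ and, consequently, for $B^{-1} = -Ng\, A_1^{-1}$.

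Finally, I would substitute this $B^{-1}$ into the top-level identity $A^{-1} = (A+S)^{-1} + (A+S)^{-1} S B^{-1} S (A+S)^{-1}$ from \refeq(JN-1), using $(A+S)^{-1} = (1+F)^{-1} + O(\lam^2)$ from \refeq(as) and the fact that $S T_1 = T_1 S = T_1$ (because $T_1\C^N \subset T\C^N \subset S\C^N$). Multiplying by $-N^{-1}g^{-1}$, as dictated by $\Ga(\lam) = -Ng(\lam) A(\lam)$ in \refeq(Gamma-1), produces \refeq(S1b); formula \refeq(S2b) is then extracted as the dominant $-N\lam^{-2} T_1[T_1\tilde{\Gg}_2(Y)T_1]^{-1}T_1$ contribution, with $-N^{-1}g^{-1}(1+F)^{-1} = O(g^{-1})$ and all other corrections absorbed into the $O(\lam^{-2}g^{-1})$ remainder. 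The main obstacle is not conceptual but careful bookkeeping: the remainders $O(\lam^2 g^2)$ in $B_1$, $O(g^{-2})$ in $B_2$, and $O(\lam^2)$ in $(A+S)^{-1}$ each get amplified by prefactors such as $Ng$ or $\lam^{-2}$ at later stages, and confirming that the net contribution to $\Ga^{-1}$ is bounded by $O(\lam^{-2}g^{-1})$ requires tracking the powers of $\lam$ and $g$ consistently across all three nested reductions.
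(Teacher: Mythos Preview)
Your proposal is correct and follows essentially the same approach as the paper: you apply \reflm(JN) a third time to the pair $(A_2,T_1)$ on $T\C^N$, use \refeq(tgg2) together with $T_1 T\Gg_1(Y)T T_1=0$ to reduce $B_2$ to $-g^{-1}T_1\tilde{\Gg}_2(Y)T_1+O(g^{-2})$, invoke \reflm(gGY2) for invertibility, and then propagate the asymptotics back through $B_1^{-1}$, $A_1^{-1}$, $B^{-1}$ and finally $\Ga(\lam)^{-1}=-N^{-1}g^{-1}A^{-1}$ exactly as the paper does. Your remark that the only real difficulty is the bookkeeping of the amplified remainders is also in line with the paper's treatment.
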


\section{Proof of Theorem \ref{th:zerolimit} and Proposition \ref{lemahoria1}} \label{section3}
 
{ We start with Proposition \ref{lemahoria1} because it is strongly related to the part (2)(c) of Theorem \ref{th:zerolimit} and it completely characterizes the zero modes of $H_{\alpha,Y}$.} 
 
\subsection{Proof of Proposition \ref{lemahoria1}} 
We first show that $\psi$ in \eqref{horia6} { belongs to the domain of $H_{\a, Y}$} and  satisfies $H_{\a, Y}\p=0$. It is obvious that 
$\psi \in L^2_{\rm loc}(\R^2)$. For large $|x|$  we have 
\[
\log |x-y_j|=\log|x|-\frac{x\cdot y_j}{|x|^2}+\mathcal{O}(|x|^{-2})
\]
and \refeq(horia2) implies that $\psi(x)$ behaves like $|x|^{-2}$ at infinity hence 
it is square integrable. We next show $\psi\in D(H_{\a, Y})$. Let $\mu\in \mathbb{C}^{+}$ be 
such that $\Gamma_{\alpha,Y}(\mu)$ is invertible and define the vector 
\[
v_\mu(x):=\psi(x)- \sum_{j=1}^N a_j
\mathcal{G}_{\mu}(x-y_j)=\sum_{j=1}^N a_j \left (-\frac{1}{2\pi}\log |x-y_j|-
\mathcal{G}_{\mu}(x-y_j)\right ).
\]
Clearly, $v_\mu \in H^2(\R^2)$ because the logarithmic singularities of $\psi$ are removed. 
Moreover, we have:
\begin{align*}
v_\mu(y_k)&=\sum_{j=1}^Na_j\left (\hat{\delta}_{jk}(-(2\pi)^{-1}\log |y_k-y_j| -
\mathcal{G}_{\mu}(y_k-y_j))+\frac{\delta_{jk}}{2\pi}(\log(\mu/2i)+\gamma)\right )\\
&=\sum_{j=1}^N[\Gamma_{\alpha,Y}(\mu)]_{kj}a_j,
\end{align*}
where we used the fact that $\tilde{D}\ab=0$. Thus we have:
$$\psi(x)=v_\mu(x)+\sum_{j,k=1}^N [\Gamma_{\alpha,Y}(\mu)]^{-1}_{jk}v_\mu(y_k) \mathcal{G}_\mu(x-y_j)$$
which (see \eqref{eqn:dom}) shows that $\psi$ belongs to the domain of $H_{\alpha,Y}$. By computing the distributional Laplacian of $v_\mu$ we obtain:
\[
(-\Delta -\mu^2)v_\mu=\mu^2\sum_{j=1}^N \frac{a_j}{2\pi}\log|x-y_j|
=-\mu^2\psi=(H_{\alpha,Y}-\mu^2)\psi,
\]
which confirms that $H_{\alpha,Y}\psi=0$.

{
We now prove the converse. Assume that $\psi$ is in the domain of $H_{\alpha,Y}$ 
and $H_{\alpha,Y}\psi=0$.  Let $\mu$ be such that $\Gamma_{\alpha,Y}(\mu)$ is invertible, 
viz. $\mu\in \C^{+}\setminus \Eg$, $\Eg\subset i[0,\infty)$ being the 
square roots of negative eigenvalues of $H_{\a,Y}$. Then there must exist 
a function $v_\m \in H^2(\R^2)$ such that 
\begin{align}\label{horia20}
\psi(x)=v_\mu(x)+\sum_{j,k=1}^N [\Gamma_{\alpha,Y}(\m)]^{-1}_{jk}v_\m(y_k) \mathcal{G}_\m(x-y_j).
\end{align}
Define $\ab= {}^t(a_1, \dots, a_N)$ by 
\[
a_j = \sum_{k=1}^N [\Gamma_{\alpha,Y}(\m)]^{-1}_{jk}v_\m(y_k), \quad j=1, \dots, N. 
\]
The vector $\ab$ { must be} independent of $\m$ because { all its components can be directly expressed  in terms of $\psi$ by using \refeq(G0g) in \eqref{horia20}:} 
\[
a_j=-2\pi \lim_{x \to y_j} \psi(x)(\log|x-y_j|)^{-1}, \quad j=1, \dots, N.
\]
Since $(H_{\alpha,Y}-\mu^2)\psi=(-\Delta-\mu^2)v_\mu$, we have the equation 
\[
-\Delta v_\mu(x)=-\mu^2\sum_{j}^N a_j \mathcal{G}_\mu(x-y_j).
\]
{ In momentum coordinates} we have: 
\[
\hat{v}_\mu(p)=-\frac{\mu^2}{2\pi}\sum_{j=1}^N a_j \frac{e^{-ip\cdot y_j}}{p^2(p^2-\mu^2)}.
\]
Since $v_\mu \in L^2(\R^2) \setminus \{0\}$, $\ab$ must obey the first two equations of \refeq(horia2). 

We now show that if we take $\m= i\lam$ with sufficiently small $\lam>0$ and $\mu \to 0$, 
then 
$\|v_{\mu}\|_{H^2} \to 0$, hence $v_\m(x) \to 0$ uniformly. Since $\ab$ satisfies 
the first two equations of \refeq(horia2) this would imply 
\[
\lim_{\m\to 0} \Gamma_{\alpha,Y}(\m)\ab = \tD \ab =0  
\]
and the { desired identity}
\[
\psi(x)=- \frac1{2\pi} \sum_{j=1}^N a_j \log |x-y_j|. 
\]
To { show that} $\|v_{\mu}\|_{H^2} \to 0$ we first observe the trivial estimate  
\[
\|\lap v_{i\lam}\|= \|\lam^2 \lap (-\lap + \lam^2)^{-1} \p \| \leq \lam^2 \|\p\|.
\]
{ In momentum coordinates} we have that 
\[
\hat{v}_{i\lam}(p) = \lam^2 (p^2+\lam^2)^{-1}\hat{\p}(p) 
\absleq \hat{\p}(p) \ \ \mbox{and} \ \ \lim_{\lam\to 0} \hat{v}_{i\lam}(p)= 0 \ \ 
\mbox{if} \ \ p\not=0.  
\] 
It follows by the dominated convergence theorem, 
\[
\|v_{i\lam}\|_{L^2(\R^2)} \to 0 \quad (\lam \to 0).
\]
Hence $\|v_{i\lam}\|_{H^2} \to 0$ as $\lam \to 0$. This finishes the proof of Proposition \ref{lemahoria1}.
}

{ 
\begin{lemma}\label{lemahoria2}
Let $N\leq 4$ and assume that $H_{\alpha,Y}$ has an embedded eigenvalue at zero. Then the eigenvalue is non-degenerate. 
\end{lemma}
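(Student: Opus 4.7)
My plan is to apply Proposition \ref{lemahoria1}, which puts zero modes of $H_{\a,Y}$ in bijection with nonzero $\ab \in \R^N$ satisfying the three conditions of \refeq(horia2). Setting
\[
V \;=\; \{\ab\in \R^N \colon \textstyle\sum_j a_j = 0,\ \sum_j a_j y_j = 0\} ,
\]
the multiplicity of the zero eigenvalue equals $\dim(V\cap \ker \tD)$, and the task is to show that this number is at most one when $N\leq 4$.

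The first step is a dimension count. Writing the constraints defining $V$ as $M\ab = 0$ for the $3\times N$ real matrix $M$ whose rows are $(1,\dots,1)$ and the two coordinate rows of $y_1,\dots,y_N$, one checks directly that $\mathrm{rank}\,M = 3$ unless the $y_j$ are all collinear, in which case $\mathrm{rank}\,M = 2$. Hence $\dim V \leq 1$ in every configuration with $N\leq 3$ and also for $N = 4$ with noncollinear centres; in these cases the bound on the multiplicity is automatic. The only nontrivial case is $N = 4$ with four collinear centres, where $\dim V = 2$.

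For this case, after a rigid motion of $\R^2$ (which preserves all distances $|y_j-y_k|$ and hence leaves $\tD$ invariant) and a translation, I may arrange $y_j = (t_j,0)$ with $t_1 = 0 < t_2 < t_3 < t_4$. I will argue by contradiction, assuming $V\subseteq \ker \tD$. Since $\tD$ is real symmetric, $\mathrm{image}(\tD) = (\ker \tD)^\perp \subseteq V^\perp = \mathrm{span}({\bf \hat{1}}, \mathbf{t})$, where $\mathbf{t} = {}^t(t_1,\dots,t_4)$; hence every column of $\tD$ lies in this two-dimensional subspace. Combined with the symmetry $\tD_{jk} = \tD_{kj}$ and the normalization $t_1 = 0$, a short linear-algebra calculation then forces the structural form
\[
\tD_{jk} \;=\; \lambda + \mu(t_j + t_k) + \nu\, t_j t_k, \qquad 1\leq j,k\leq 4,
\]
for some real constants $\lambda, \mu, \nu$.

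Comparing with the explicit off-diagonal values $\tD_{jk} = (2\pi)^{-1}\log|t_j-t_k|$ and specialising to $j = 1$ (where $t_1 = 0$) yields
\[
\log t_k \;=\; 2\pi\lambda + 2\pi\mu\, t_k, \qquad k = 2, 3, 4 ,
\]
i.e.\ three distinct positive zeros of the function $t\mapsto \log t - 2\pi\mu\, t - 2\pi\lambda$. But this function has second derivative $-1/t^2 < 0$ on $(0,\infty)$, so it is strictly concave and admits at most two zeros --- the desired contradiction. The main obstacle I expect is deriving the structural form for $\tD$ in the collinear case; once that is in hand the strict concavity of the logarithm finishes the argument in one line.
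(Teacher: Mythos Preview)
Your argument is correct. The dimension count and the reduction to the collinear $N=4$ case match the paper exactly; the difference lies in how the collinear case is handled. The paper fixes an explicit basis $\ab_1,\ab_2$ of $V$, normalises by scaling so that $t_2=1$, solves each system $\tD\ab_i=0$ for the $\alpha_j$'s, and then observes that the two expressions for $\alpha_1$ force
\[
\frac{\log t_3}{t_3-1}=\frac{\log t_4}{t_4-1},
\]
which is impossible since $t\mapsto(\log t)/(t-1)$ is strictly decreasing on $(1,\infty)$. Your route is more structural: from $V\subseteq\ker\tD$ you pass to $\mathrm{image}\,\tD\subseteq V^{\perp}=\mathrm{span}(\hat{\bf 1},\mathbf t)$, and symmetry then forces $\tD_{jk}=\lambda+\mu(t_j+t_k)+\nu\,t_jt_k$; reading off the first row yields three distinct zeros of the strictly concave function $t\mapsto\log t-2\pi\mu t-2\pi\lambda$, a contradiction. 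Your derivation of the structural form is sound (write each column as $p_k\hat{\bf 1}+q_k\mathbf t$, use $t_1=0$ to get $p_k=\lambda+\mu t_k$, then symmetry for $j,k\geq 2$ gives $(q_k-\mu)/t_k$ constant). The two contradictions are in fact the same phenomenon: the monotonicity of $(\log t)/(t-1)$ is precisely the chord-slope manifestation of the strict concavity of $\log t$. Your version has the minor advantage of not needing the scaling $t_2=1$ or any explicit basis of $V$.
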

\begin{proof}
We know that we need at least $N=3$ in order to have a zero mode. Without any loss of generality, up to a translation, a scaling and a relabelling, we may always assume that $y_1=0$, $|y_2|=1$, and $1\leq |y_j|$ for all $j\geq 3$. 

If $N=3$ then $y_2$ and $y_3$ must be linearly dependent, otherwise the first two constraints of \eqref{eqn:horia2} impose ${\bf a}=0$ and no zero mode can exist. If $y_2$ and $y_3$ are collinear then up to a translation, a scaling and a relabelling we may assume that $y_2$ and $y_3$ have the same direction and $|y_1|=0<|y_2|=1<|y_3|$. We write $a_2=-|y_3| a_3$ and $a_1=-a_2-a_3=(|y_3|-1)a_3$. Thus all the compatible ${\bf a}$'s belong to an one-dimensional subspace generated by the vector with components ${\bf a}_1={}^t(|y_3|-1,-|y_3|,1)$. Now from the equation $\tilde{D}{\bf a}_1=0$ we can find the right combination of $\alpha$'s (uniquely determined by $|y_3|$) for which a zero mode can exist. Thus if a zero mode exists, it must be non-degenerate. 

Now let $N=4$. We know that $y_2$, $y_3$ and $y_4$ are linearly dependent. There are two possibilities: either these three vectors are all collinear or they are not. 

If they are not collinear, for example $y_2$ and $y_3$ are linearly independent, then given any $a_4\in \R$ we may uniquely determine $a_2$ and $a_3$ from the equation $a_2y_2+a_3y_3=-a_4 y_4$ and also $a_1=-a_2-a_3-a_4$. Thus we are again in a situation in which the compatible ${\bf a}$'s form an one-dimensional family. As in the $N=3$ case, if a zero mode exists, it must be unique.

Let us now assume that all four points are collinear. We may also assume without loss of generality that $y_2$, $y_3$ and $y_4$ have the same direction and 
$$|y_1|=0<|y_2|=1<|y_3|<|y_4|.$$ Then we have $a_2=-a_3|y_3|-a_4 |y_4|$ and $a_1=(|y_3|-1)a_3+(|y_4|-1)a_4$. This time, the family of compatible ${\bf a}$'s is two dimensional, generated by the following two linearly independent vectors: 
$${\bf a}_1={}^t(|y_3|-1, -|y_3|, 1,0)\; {\rm and}\; {\bf a}_2={}^t(|y_4|-1, -|y_4|,0, 1).$$
To each generator we can separately find some $\alpha$'s for which a zero mode would exist, but we want to see if we can find one joint $\alpha$ for which 
both equations $\tilde{D}{\bf a}_1=0$ and $\tilde{D}{\bf a}_2=0$ are simultaneously satisfied. By solving for $\alpha$ in both equations we obtain four compatibility relations involving $|y_3|$ and $|y_4|$. The one involving $\alpha_1$ imposes the condition: 
$$\frac{\log |y_3|}{|y_3|-1}=\frac{\log |y_4|}{|y_4|-1}.$$
But the function $(\log t)/(t-1)$ is strictly decreasing if $t\in (1,\infty)$, hence the above equality cannot hold true. Thus the zero mode is unique if it exists. 
\end{proof}

\noindent {\bf Remark}. If $N\geq 5$, the family of ${\bf a}$'s which are compatible with the first two equations in \eqref{eqn:horia2} is always at least two dimensional. The compatibility relations (only involving the $y$'s) which are obtained from the condition that the $\alpha$'s must be the same, are much more complicated. Nevertheless, they can always be written as an equation of the type $F(y_3,...,y_N)=0$ where $F:\R^{N-2}\mapsto \R^N$ is a rather complicated function; here $y_1=0$ and $y_2={}^t(1,0)$ are fixed and no two $y$'s can coincide. We conjecture that no degeneracy is possible when $N\geq 5$.   
}
\subsection{{ Proof of Theorem \ref{th:zerolimit}: preliminaries}}

We will study the operator 
\bqn 
D(\lam)=(H_{\a,Y}-\lam^2)^{-1}- (H_{0}-\lam^2)^{-1}
=\la \widehat{\Gg}_{\lam,Y}(x), \Gamma_{\alpha,Y}(\lam)^{-1} \widehat{\Gg}_{\lam,Y}(y) \ra 
\lbeq(prob-1a)
\eqn 
when $\lam \in \Cb^{+}\setminus\{0\} $ converges to zero, by using the results of Section \ref{section2}. { Our results} will be stated 
for $\lam>0$, however, they hold for 
$\lam \in \Cb^{+}\setminus\{0\}$ with the same proof. 
As before, we identify operators with their integral kernels.

We define 
\[
R_0(\lam, x)= 
\Gg_\lam (x)- g(\lam |x|) = \Gg_\lam (x)- g(\lam) - G_0(x)  
\]
and use the vector notation  
\[
\widehat{R}_{0,Y}(\lam, x)=
\begin{pmatrix} {R}_{0}(\lam ,x-y_1) \\ 
\vdots \\ {R}_{0}(\lam, x-y_N) \end{pmatrix}, \qquad 
\widehat{g}_{\lam} = g(\lam) \hat{\bf 1} 
\]
so that 
\bqn \lbeq(64)
\widehat{\Gg}_{\lam,Y} (x)=\hat{g}(\lam) + \widehat{G}_{0,Y}(x) +\widehat{R}_{0,Y}(\lam, x). 
\eqn

By virtue of \refeq(HNKL) for the Hankel function for small $\lam$, we have for any 
constant $C_1>0$ and for an arbitrary small $0<\d$ that  
\bqn 
R_0(\lam, x) \absleq \, C_\d |\lam |x||^{\d}, \quad  |\lam |x||< C_1, 
\lbeq(Hankel-small) 
\eqn 
and from \refeq(large-hankel) for large $\lam$ that 
\bqn 
\Gg_\lam (x)\, \absleq\, C |\lam |x||^{-1/2}, \quad  |\lam |x|| \geq C_1. \lbeq(Hankel-large)
\eqn 

We take a cut-off function $\chi\in C_0^\infty(\R^2)$ such that 
\[
\chi(x)=1 , \ \mbox{for} \ |x|\leq 1 \ \ \mbox{and} \ \ \chi(x)=0, \ \mbox{for} \ |x|\geq 2 
\]
and define for $\lam >0$ 
\[
\chi_\lam (x) = \chi(\lam x), \quad 
\widehat{\Gg}_{\lam,Y}^{\leq} (x) = \chi_\lam (x) \widehat{\Gg}_{\lam,Y}, \ \
\widehat{\Gg}_{\lam,Y}^{\geq} (x) = (1-\chi_\lam (x)) \widehat{\Gg}_{\lam,Y}, \ 
\]
and likewise for other functions. 
{ To shorten the} formulas, we often omit the variables from various functions. 

\begin{lemma} For any $\lam_0>0$ and $\s>1$, there exists $C>0$ such that 
the following estimates are satisfied for $0<\lam<\lam_0$: 
\begin{gather} 
\|\widehat{\Gg}_{\lam,Y}^{\geq} \|_{L^2_{-\s}} \leq  C \lam^{\s-1}, \quad 
\|\widehat{G}_{0,Y}^{\geq}\|_{L^2_{-\s}} \leq C \lam^{\s-1} \la g(\lam) \ra.  \lbeq(2nd) \\
\|\widehat{\Gg}_{\lam,Y}^{\leq} \|_{L^2_{-\s}} \leq  C \la g(\lam)\ra,  \quad 
\|\widehat{G}_{0,Y}^{\leq}\|_{L^2_{-\s}} \leq C  .  \lbeq(R-3) 
\end{gather}
For any $0<\d<\s-1$, there exists $C>0$ such that for $0<\lam<\lam_0$
\bqn 
\|\widehat{R}_{0,Y}(\lam,x)\|_{L^2_{-\s}}\leq C \lam^{\d}. \lbeq(R-0Y) 
\eqn 
\end{lemma}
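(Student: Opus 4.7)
\medskip

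\noindent\textbf{Proof plan.} All four inequalities are established by splitting the integral according to the support of $\chi_\lambda$, after which one changes variables $x=u/\lambda$ whenever needed. For the exterior estimates \refeq(2nd), on the support of $1-\chi_\lambda$ one has $|x|\geq 1/\lambda$, hence $|\lambda(x-y_j)|\geq c>0$ for every $j$ once $\lambda$ is small enough (since $Y$ is fixed). Thus \refeq(Hankel-large) yields $|\Gg_\lambda(x-y_j)|\leq C(\lambda|x-y_j|)^{-1/2}\leq C'\lambda^{-1/2}|x|^{-1/2}$; squaring and integrating against $\langle x\rangle^{-2\sigma}$ and substituting $u=\lambda x$ gives
\[
\int_{|x|\geq 1/\lambda}\lambda^{-1}|x|^{-1}\langle x\rangle^{-2\sigma}\,dx
\;\leq\;C\lambda^{2\sigma-2},
\]
valid because $\sigma>1$; this proves the first bound in \refeq(2nd). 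For the second, one uses $|G_0(x-y_j)|\leq C(1+|\log|x||)$ on $|x|\geq 1/\lambda$; the same scaling produces the factor $(\log(1/\lambda))^2$, i.e.\ $\langle g(\lambda)\rangle^2$.

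\medskip

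For \refeq(R-3) I use the decomposition \refeq(64): $\widehat{\Gg}^\leq_{\lambda,Y}=g(\lambda)\chi_\lambda\hat{\bf 1}+\chi_\lambda\widehat{G}_{0,Y}+\chi_\lambda\widehat{R}_{0,Y}(\lambda,\cdot)$. The first summand contributes $|g(\lambda)|\,\|\chi_\lambda\|_{L^2_{-\sigma}}\leq C|g(\lambda)|$ because $\langle x\rangle^{-2\sigma}$ is integrable on $\R^2$ for $\sigma>1$. The second summand is bounded uniformly in $\lambda$ by $\|\widehat{G}_{0,Y}\|_{L^2_{-\sigma}}$, which is finite for $\sigma>1$ since each component has only a logarithmic (hence locally square-integrable) singularity at $y_j$ and behaves like $\log|x|$ at infinity, and $(\log|x|)^2\langle x\rangle^{-2\sigma}$ is integrable at infinity. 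The third summand is controlled by \refeq(R-0Y), which I prove next. The same argument without the $g(\lambda)\hat{\bf 1}$ term yields the bound for $\widehat{G}^\leq_{0,Y}$.

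\medskip

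The main work is \refeq(R-0Y). Fix $0<\delta<\sigma-1$ and split each component $R_0(\lambda,x-y_j)$ according to whether $|\lambda(x-y_j)|\leq C_1$ or not. On the inner region, \refeq(Hankel-small) gives $|R_0(\lambda,x-y_j)|\leq C_\delta\lambda^\delta|x-y_j|^\delta$, hence
\[
\int_{|\lambda(x-y_j)|\leq C_1}\lambda^{2\delta}|x-y_j|^{2\delta}\langle x\rangle^{-2\sigma}\,dx
\;\leq\;C\lambda^{2\delta},
\]
because $2\delta-2\sigma<-2$ makes the full-space integral convergent. On the outer region $|\lambda(x-y_j)|\geq C_1$, I bound $|R_0|\leq|\Gg_\lambda(x-y_j)|+|g(\lambda)|+|G_0(x-y_j)|$: the first contribution is handled by \refeq(Hankel-large) exactly as in \refeq(2nd), giving $O(\lambda^{2\sigma-2})$; the $|g(\lambda)|^2$ and $|\log|x-y_j||^2$ contributions produce $(\log\lambda)^2\lambda^{2\sigma-2}$ after the same scaling. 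Since $\delta<\sigma-1$, one may choose $\delta'\in(\delta,\sigma-1)$ and absorb the logarithmic factors into $\lambda^{2(\delta'-\delta)}$, yielding $O(\lambda^{2\delta})$ overall.

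\medskip

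The only delicate point is the outer region in \refeq(R-0Y): the individual terms of $R_0$ decay much more slowly than their combination, so one must rely on the slack $\sigma-1>\delta$ to absorb the logarithmic losses, which is why the estimate is stated with an arbitrary $\delta$ strictly smaller than $\sigma-1$. Everything else is elementary scaling together with the explicit asymptotic profile of the Hankel function recorded in \refeq(Hankel-small) and \refeq(Hankel-large).
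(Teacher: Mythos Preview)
Your proof is correct and follows essentially the same route as the paper: direct use of \refeq(Hankel-large) on the exterior region to obtain \refeq(2nd), the decomposition $\Gg_\lam = g(\lam) + G_0 + R_0$ on the interior region for \refeq(R-3), and the inner/outer split for \refeq(R-0Y) with \refeq(Hankel-small) on the inner piece and the already-proved \refeq(2nd) on the outer piece. Your handling of the outer region in \refeq(R-0Y) via the triangle inequality and absorption of the logarithmic loss into the slack $\s-1-\d>0$ is exactly what the paper does in one line when it writes ``Estimate \refeq(2nd) implies $\|\widehat{R}_{0,Y}^{\geq}(\lam)\|_{L^2_{-\s}}\leq C \lam^{\d}$''; you have simply spelled out the details.
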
 
\begin{proof} 
By virtue of \refeq(Hankel-large), we have for $\s>1$ and for small 
$0<\lam <\lam_0$ that 
\begin{gather*}
\|\widehat{\Gg}_{\lam,Y}^{\geq} \|_{L^2_{-\s}} 
\leq C \lam^{-\frac12} \Big(
\int_{|x|\geq C\lam^{-1}} |x|^{-1-2\s}dx\Big)^{1/2} 
= C \lam^{\s-1} .
 \\
\|\widehat{G}_{0,Y}^{\geq}\|_{L^2_{-\s}} 
\leq C \Big(\int_{|x|\geq C\lam^{-1}} 
\frac{(\log |x|)^2}{|x|^{2\s}}dx\Big)^{1/2} 
= C \lam^{\s-1} \log \lam .
\end{gather*}
This proves \refeq(2nd). The first of the following estimates is obvious and the 
second follows from \refeq(Hankel-small):
\begin{gather*} 
\Big( 
\int_{|x|\leq \lam^{-1}} 
|\widehat{G}_{0,Y}(x)|^2 \ax^{-2\s} dx 
\Big)^{1/2} \leq C <\infty, \\
\Big(
\int_{|x|\leq \lam^{-1}} 
|\widehat{\Gg}_{\lam,Y}(x)|^2 \ax^{-2\s} dx  
\Big)^{1/2}
\leq C \la g(\lam)\ra. 
\end{gather*}
{ This yields} \refeq(R-3). 
By virtue of \refeq(Hankel-small), we have for any $0<\d<\s-1$ that 
\begin{align}
& \Big(\int_{\R^2} |\widehat{R}_{0,Y}^{\leq}(\lam, x)|^2 
\ax^{-2\s} dx\Big)^{1/2}
\notag \\
& \hspace{1cm} 
\leq C \lam^\d \Big(\int_{|x|\leq \lam^{-1}} |x|^{2\d} 
\ax^{-2\s} dx  \Big)^{1/2} \leq C \lam^{\d} .
\lbeq(R-5)
\end{align}
Estimate \refeq(2nd) implies 
$\|\widehat{R}_{0,Y}^{\geq}(\lam)\|_{L^2_{-\s}}\leq C \lam^{\d}$
for any $0<\d<\s-1$, which completes the proof of  the lemma. 
\end{proof}

We will now study $D(\lam)$ of \refeq(prob-1a) in the space 
$\Bb_\s= \Bb(L^2_\s(\R^2), L^2_{-\s}(\R^2))$, $\s>1$. 
 Note that $L^2_\s(\R^2) \subset L^1(\R^2)$ when $\s>1$. 

We begin with studying the contribution to $D(\lam)$ 
of $-N^{-1}g^{-1}(1+ F)^{-1}$ which is the common first term 
in the right hand sides of the first formulas for 
$\Gamma(\lam)$ in \reflmsss(5-1,5-2,5-4,5-5). 

\begin{lemma}
\lblm(first-term)  Let $\s>1$. Then, as a $\Bb_\s$-valued function 
of $\lam>0$ we have  
\begin{align} \lbeq(Kern-1)
& - N^{-1}g^{-1} 
\la \widehat{\Gg}_{\lam, Y} (x) ,(1+ F)^{-1} \widehat{\Gg}_{\lam, Y} (y) \ra \\
& \quad =  -g - N^{-1}
\big(\la \hat{G}_{0,Y}(x), \hat{\bf 1} \ra 
+ \la \hat{\bf 1}, \hat{G}_{0,Y}(y) \ra 
+{N^{-1}} \la \hat{\bf 1}, \tD  \hat{\bf 1} \ra \big)  +O(g^{-1}), 
\lbeq(Kern-1a)
\end{align} 
where $O(g^{-1})$ is such that 
$\|O(g^{-1})\|_{\Bb_\s} \leq C|g(\lam)|^{-1}$ as $\lam \to 0$. 
\end{lemma}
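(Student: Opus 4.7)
The plan is to substitute the decomposition \refeq(64),
$\widehat{\Gg}_{\lam,Y}(x) = g(\lam)\hat{\bf 1} + \widehat{G}_{0,Y}(x) + \widehat{R}_{0,Y}(\lam,x)$,
into the bilinear form, and to expand $(1+F)^{-1}$ via \refeq(evid) to second order, namely
\[
(1+F)^{-1} = 1 - F + O(g^{-2}) = 1 + N^{-1}g^{-1}\tD + O(g^{-2}),
\]
where I used the definition \refeq(F) of $F$. Multiplying these two expansions produces finitely many terms, which I would group into the main contributions (reproducing \refeq(Kern-1a)) and a set of remainders to be shown $O(g^{-1})$ in $\Bb_\s$.

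The main contributions come from three combinations: (i) the pure $g\hat{\bf 1}\otimes g\hat{\bf 1}$ piece paired with the identity part of $(1+F)^{-1}$, giving $-N^{-1}g^{-1} \cdot g^2 \la \hat{\bf 1}, \hat{\bf 1}\ra = -g$; (ii) the mixed pieces $g\hat{\bf 1}\otimes \widehat{G}_{0,Y}$ and $\widehat{G}_{0,Y}\otimes g\hat{\bf 1}$ paired with the identity, yielding $-N^{-1}(\la \widehat{G}_{0,Y}(x), \hat{\bf 1}\ra + \la \hat{\bf 1}, \widehat{G}_{0,Y}(y)\ra)$ once the $g$'s cancel the external $g^{-1}$; and (iii) the $F$-correction $N^{-1}g^{-1}\tD$ applied to $g\hat{\bf 1}\otimes g\hat{\bf 1}$, giving $-N^{-2}\la \hat{\bf 1}, \tD \hat{\bf 1}\ra$. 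Together these reproduce exactly \refeq(Kern-1a).

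All remaining cross-terms must be shown to be $O(g^{-1})$ in $\Bb_\s$-norm by means of the $L^2_{-\s}$-estimates already established. The key observation is that a rank-one operator with integral kernel $f(x) h(y)$ has $\Bb_\s$-norm bounded by $\|f\|_{L^2_{-\s}} \|h\|_{L^2_{-\s}}$, and that constants lie in $L^2_{-\s}$ for $\s>1$. Grouping by source, the remainders are: (a) $\widehat{G}_{0,Y}(x)\otimes\widehat{G}_{0,Y}(y)$ paired with the identity, which carries only the external $g^{-1}$ factor and has finite $L^2_{-\s}$-norms by \refeq(R-3); (b) any term involving $\widehat{R}_{0,Y}$, which by \refeq(R-0Y) is bounded by $C\lam^\d$ after factor collection, strictly better than $g^{-1}$; (c) the $O(g^{-2})$ tail of $(1+F)^{-1}$ contracted against $g\hat{\bf 1}\otimes g\hat{\bf 1}$, producing $-N^{-1}g^{-1}\cdot g^2 \cdot O(g^{-2}) = O(g^{-1})$; and (d) mixed terms combining $\widehat{G}_{0,Y}$ with the $F$-correction, which gain an extra $g^{-1}$ beyond the external prefactor.

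The step I expect to require the most care is simply the bookkeeping of the powers of $g(\lam)$ in each cross-term: after accounting for the external factor $-N^{-1}g^{-1}$, the combinations giving net order $O(g)$ or $O(1)$ are precisely the three main contributions identified above, while every other combination produces order $O(g^{-1})$ or smaller. There is no real analytic obstacle here — the work lies in checking systematically that no cross-term has been overlooked — and once this algebraic bookkeeping is carried out, the $\Bb_\s$-bounds follow immediately from the rank-one norm identity together with \refeq(2nd) and \refeq(R-0Y).
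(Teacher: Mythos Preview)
Your proposal is correct and follows essentially the same term-by-term strategy as the paper: decompose $\widehat{\Gg}_{\lam,Y}$ via \refeq(64), expand $(1+F)^{-1}$ as a Neumann series, and bound each cross-term in $\Bb_\s$ using the rank-one norm identity together with the $L^2_{-\s}$ estimates. The main contributions you identify (your (i)--(iii)) correspond exactly to the paper's $Z_1$, $Z_2$, $Z_3$, and the remainder bookkeeping is the same.

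The one point of difference is that the paper first introduces the spatial cutoff $\chi_\lam$, splits $\widehat{\Gg}_{\lam,Y} = \widehat{\Gg}_{\lam,Y}^{\leq} + \widehat{\Gg}_{\lam,Y}^{\geq}$, disposes of the $\geq$ pieces via \refeq(2nd), and only then applies the decomposition \refeq(64) to the $\leq$ part. You skip this step and use \refeq(64) globally. This shortcut is legitimate for $\s>1$: the constant vector $\hat{\bf 1}$ and the full $\widehat{G}_{0,Y}$ both lie in $L^2_{-\s}$ (the latter since $(\log|x|)^2\langle x\rangle^{-2\s}$ is integrable on $\R^2$), and \refeq(R-0Y) already gives the uncut estimate $\|\widehat{R}_{0,Y}(\lam,\cdot)\|_{L^2_{-\s}} \leq C\lam^\d$. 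So the cutoff is not essential here, and your route is slightly more economical. Just note that when you cite \refeq(R-3) for the $\widehat{G}_{0,Y}$ bound you should either take $\lam$ small enough that the $\geq$ part is negligible, or simply observe directly that $\|\widehat{G}_{0,Y}\|_{L^2_{-\s}}$ is a finite $\lam$-independent constant.
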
 
\begin{proof} 
We substitute $\widehat{\Gg}_{\lam, Y} (x)
=\widehat{\Gg}_{\lam, Y}^{\geq} (x)+ \widehat{\Gg}_{\lam, Y} ^{\leq} (x)$ 
and likewise for $\widehat{\Gg}_{\lam, Y} (y)$ in \refeq(Kern-1). 
Then, \refeq(2nd) and \refeq(R-3) imply that as $\lam \to 0$ 
\bqn \lbeq(Kern-2)
\refeq(Kern-1) = - 
N^{-1}g^{-1} \la \widehat{\Gg}_{\lam, Y}^{\leq} (x) ,
(1+ F)^{-1} \widehat{\Gg}_{\lam, Y}^{\leq} (y) \ra +  O(\lam^{\d}) 
\eqn 
for any $0<\d<\s-1$. Multiplying \refeq(Hankel-small) 
by $\chi_\lam(x)$ we have    
\[
\widehat{\Gg}_{\lam,Y}^{\leq}(x)= 
\chi_\lam(x)\hat{g}(\lam) + \widehat{G}_{0,Y}^{\leq}(x) 
+ \widehat{R}_{0,Y}^{\leq}(\lam,x), 
\]
and likewise for $\widehat{\Gg}_{\lam,Y}^{\leq}(y)$ 
which we insert in the right of \refeq(Kern-2). 
This produces nine terms out of which five contain 
$\widehat{R}_{0,Y}^{\leq}(\lam,x)$ or 
$\widehat{R}_{0,Y}^{\leq}(\lam,y)$ and, 
by virtue of \refeq(R-3) and \refeq(R-0Y), they are  
bounded by $C \lam^{\d}$, $\d<\s-1$ in $\Bb_\s$. 
We { collect them} into $O(g^{-1})$ of \refeq(Kern-1a). 
Moreover, we trivially have 
\[
\| N^{-1}g(\lam)^{-1} \la \widehat{G}_{0,Y}^{\leq} (x) ,
(1+ F(\lam))^{-1}\widehat{G}_{0,Y}^{\leq} (y) \ra \|_{\Bb_\s} \leq C \la g(\lam)\ra^{-1}
\]
{ and we include this too into} $O(g^{-1})$.  
Thus, we only have the following three terms 
$Z_1, Z_2$ and $Z_3$ to deal with. 
\begin{align} 
Z_1 & = - N^{-1}g^{-1} \la  \chi_\lam(x)\hat{g} ,
(1+ F)^{-1} \chi_\lam(y)\hat{g} \ra,  \lbeq(z1) 
\\
Z_2 & = - N^{-1}g^{-1} \la  \widehat{G}_{0,Y}^{\leq}(x) ,
(1+ F)^{-1} \chi_\lam(y)\hat{g} \ra, \lbeq(z2)
\\
Z_3 & =  - N^{-1}g^{-1} \la  \chi_\lam(x)\hat{g}, 
(1+ F)^{-1}\widehat{G}_{0,Y}^{\leq}(y)  \ra . \lbeq(z3)
\end{align}
We have by using that 
$\|1 -\chi_\lam \|_{L^2_{-\s}} \leq C \lam^{\s-1}$ for $0<\lam<C_1$  
\begin{align}
Z_1 & = - N^{-1} g^{-1}\la \hat{g}, 
(1+ F)^{-1} \hat{g}\ra \chi_\lam \otimes \chi_\lam \notag \\
& = - N^{-1} g^{-1}
\Big(\la \hat{g}, \hat{g}\ra - \la \hat{g}, F \hat{g}\ra + 
\la \hat{g}, F^2 (1+ F)^{-1} \hat{g}\ra \Big) \chi_\lam 
\otimes \chi_\lam \notag  \\
& = \left(- g-  N^{-2}\la \hat{\bf 1}, \tD \hat{\bf 1}\ra \right)
(1 \otimes 1) + O(g^{-1}). 
\lbeq(fin-1)  
\end{align} 
In a similar fashion we have 
\begin{align}
Z_2 & = - N^{-1} \la  \widehat{G}_{0,Y}(x), \hat{\bf 1} \ra + O(g^{-1}), 
\lbeq(fin-2) \\
Z_3 & = - N^{-1} \la \hat{\bf 1},  \widehat{G}_{0,Y}(y)\ra + O(g^{-1}).  \lbeq(fin-3)
\end{align}
{ The combination} of \refeq(fin-1), \refeq(fin-2) and \refeq(fin-3) concludes 
the proof of \reflm(first-term).
\end{proof}

The following corollary shows that the sum of the first term and 
the contribution by the common first term $-N^{-1}g^{-1}(1+ F)^{-1}$ of $\Ga(\lam)$ in the Lemmas \ref{lm:5-1}, \ref{lm:5-2}, \ref{lm:5-4}, and \ref{lm:5-5} 
 to the second term on the right of  
\bqn \lbeq(reda)
(H_{\a,Y}-\lam^2)^{-1}= \Gg_\lam(x-y)+ 
\la \widehat{\Gg}_{\lam,Y}(x), \Ga(\lam)^{-1} \widehat{\Gg}_{\lam,Y}(y) \ra 
\eqn 
is bounded in $\Bb_{\s}$: 

\begin{corollary} \lbcor(first-term)
Let $\s>1$. Then as $\lam \to 0$, 
\begin{align} 
& \Gg_\lam(x-y)-N^{-1}g^{-1}
\la \widehat{\Gg}_{\lam, Y} (x) ,(1+ F)^{-1} \widehat{\Gg}_{\lam, Y} (y) \ra 
\notag \\
& = 
G_0(x-y)- N^{-1}
\big(\la \hat{G}_{0,Y}(x), \hat{\bf 1} \ra 
+ \la \hat{\bf 1}, \hat{G}_{0,Y}(y) \ra 
+{N^{-1}} \la \hat{\bf 1}, \tD  \hat{\bf 1} \ra \big)  +O(g^{-1}) \lbeq(83)
\end{align}
{ which } is bounded in $\Bb_{\s}$ as $\lam \to 0$. 
\end{corollary}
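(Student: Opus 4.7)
The plan is to combine the elementary identity $\Gg_\lam(x-y)=g(\lam)+G_0(x-y)+R_0(\lam,x-y)$, which is the very definition of $R_0$, with the asymptotic formula \refeq(Kern-1a) of \reflm(first-term). The pointwise $g(\lam)$ of the first summand cancels exactly the $-g(\lam)$ produced by the lemma, leaving the explicit main part of \refeq(83), the additional contribution $R_0(\lam,x-y)$, and the inherited $O(g^{-1})$ remainder. What then remains is to verify two points: (i) the integral operator with kernel $R_0(\lam,x-y)$ has $\Bb_\s$-norm of order $O(g(\lam)^{-1})$, so that it can be absorbed into the remainder; and (ii) the explicit main part itself defines an operator in $\Bb_\s$ bounded uniformly as $\lam\to 0$.

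For (i) I would split the kernel with the cut-off $\chi_\lam(x-y)$. On the near region $\{\lam|x-y|\leq 2\}$ the expansion \refeq(HNKL) gives $|R_0(\lam,x-y)|\leq C_\d\lam^\d|x-y|^\d$ for any $0<\d<\s-1$; bounding $|x-y|^{2\d}\leq 2^{2\d}(\ax^{2\d}+\la y\ra^{2\d})$ and integrating against $\ax^{-2\s}\la y\ra^{-2\s}$ exactly as in \refeq(R-5) produces a Hilbert--Schmidt bound of order $\lam^\d$, which is $o(g(\lam)^{-1})$. On the far region $\{\lam|x-y|\geq 2\}$ the $\Gg_\lam(x-y)$ contribution is controlled by \refeq(Hankel-large) as in \refeq(2nd), while the subtracted piece $-g(\lam)-G_0(x-y)=-\tfrac{1}{2\pi}\log(\lam|x-y|/2)-\tfrac{i}{4}+\tfrac{\gamma}{2\pi}$ grows only logarithmically in $|x-y|$ and is supported where $|x|+|y|\gtrsim\lam^{-1}$, so the weights $\ax^{-2\s}\la y\ra^{-2\s}$ supply a gain $\lam^{2\s-2}$ up to a logarithmic factor, which is again $o(g(\lam)^{-1})$ since $\s>1$.

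For (ii) the constant piece $-N^{-2}\la\hat{\bf 1},\tD\hat{\bf 1}\ra$ multiplies the rank-one operator $1\otimes 1$, which is bounded in $\Bb_\s$ because the constant function $1$ belongs to $L^2_{-\s}$ for $\s>1$. For the combination $G_0(x-y)-N^{-1}\la\hat G_{0,Y}(x),\hat{\bf 1}\ra-N^{-1}\la\hat{\bf 1},\hat G_{0,Y}(y)\ra$ the logarithmic growths at infinity cancel in pairs: for large $|x|$ and bounded $y$ one has $-\tfrac{1}{2\pi}\log|x-y|+\tfrac{1}{2\pi N}\sum_j\log|x-y_j|=O(|x|^{-1})$, and the analogous statement holds when $|y|$ is large. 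The only remaining singularity is the local $\log$-singularity along $x=y$, which is locally in $L^2$ and causes no obstruction. Combined with the weights $\ax^{-\s}\la y\ra^{-\s}$ this yields the required $L^2_\s\to L^2_{-\s}$ bound.

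The main obstacle is step (i) on the far region: the kernel $R_0(\lam,x-y)$ does not decay pointwise there, so the smallness required to obtain a $g(\lam)^{-1}$ bound must come entirely from the size of the cut-off region $\{|x-y|\gtrsim\lam^{-1}\}$ together with the weights. The bookkeeping is routine, mimicking the proof of \refeq(2nd), but one must be careful to extract a genuine power of $\lam$ and not merely the logarithmic gain $|g(\lam)|^{-1}$.
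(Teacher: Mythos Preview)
Your approach is correct and is essentially the paper's: substitute the small-argument expansion of $\Gg_\lam(x-y)$ so that the leading $g(\lam)$ cancels against the $-g$ coming from \reflm(first-term). The paper's own proof is two lines and records the remainder as $O(\lam^2 g(\lam)|x-y|^2)$, leaving the $\Bb_\s$ control implicit; your explicit near/far splitting in (i) is the honest way to carry this out, since the pointwise bound $\lam^2 g(\lam)|x-y|^2$ alone does not yield a $\Bb_\s$ estimate for $1<\s\leq 3$. One simplification is available in your step (ii): for $\s>1$ each of $G_0(x-y)$, $\la\hat G_{0,Y}(x),\hat{\bf 1}\ra$ and $\la\hat{\bf 1},\hat G_{0,Y}(y)\ra$ is already individually bounded (indeed Hilbert--Schmidt) in $\Bb_\s$, because $(\log\ax)^2\ax^{-2\s}$ is integrable on $\R^2$; no cancellation of the logarithmic growths is required.
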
 
\begin{proof}
Substitute $\Gg_\lam(x-y)= g(\lam) + G_0(x-y)+ O(\lam^2 g(\lam)|x-y|^2)$.
Then \refeq(83) immediately follows \reflm(first-term). 
\end{proof}

The second terms in the first formulas for 
$\Gamma(\lam)$ in Lemmas \ref{lm:5-1}, \ref{lm:5-2}, \ref{lm:5-4}, and \ref{lm:5-5}  
are all {sandwiched} by $(1+ F)^{-1}S $ and 
$S(1+F)^{-1}$ and, for studying their  
contributions to $D(\lam,x,y)$, we use the following lemma. 
Recall that $T$ is a linear map defined in $S\C^N$ and if we {identify}  
$T$ with ${ {\bf 0}_{P\C^N}}\oplus T$, then $TS= ST= T$. 

\begin{lemma} \lblm(1FG) Let $\s>1$ and $0<\d<\s-1$. Then, 
there exists $\lam_0>0$ such that for $0<\lam<\lam_0$ { the} 
following estimates are satisfied for { some}  constant $C>0$:   
\begin{align} \lbeq(SGL)
& \|S(\widehat{\Gg}_{\lam, Y} - 
\widehat{G}_{0,Y})\|_{L^2_{-\s}(\R^2)} \leq C \lam^\d , \\
& \|S(1+ F)^{-1}\widehat{\Gg}_{\lam, Y} - 
S(\widehat{G}_{0,Y} + N^{-1} \tD \hat{\bf 1}) 
\|_{L^2_{-\s}} \leq C \la g\ra^{-1} \lbeq(1F)
\end{align} 
In particular, { the} $L^2_{-\s}(\R^2)$-valued analytic functions 
$S\widehat{\Gg}_{\lam, Y}$ and 
$S (1+ F)^{-1}\widehat{\Gg}_{\lam, Y}$  of $\lam \in \C^{+}$ have continuous extensions 
to the closure $\Cb^{+}$. 
\end{lemma}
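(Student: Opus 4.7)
The key observation is that $S\hat{\bf 1}=0$, since $\hat{\bf 1}$ lies in the range of $P=\eb\otimes\eb$ where $\eb = N^{-1/2}\hat{\bf 1}$. Consequently $S\hat g(\lam)=g(\lam)\,S\hat{\bf 1}=0$ for every $\lam$, which is the single mechanism that removes the $g(\lam)$-divergent contribution from $\widehat{\Gg}_{\lam,Y}$ after projecting with $S$.

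For \eqref{eqn:SGL}, I would simply substitute the decomposition \eqref{eqn:64}
\[
\widehat{\Gg}_{\lam,Y}(x) - \widehat{G}_{0,Y}(x) \;=\; \hat g(\lam) + \widehat{R}_{0,Y}(\lam,x)
\]
apply $S$ to kill $\hat g(\lam)$, and then invoke the pointwise-in-$\C^N$ boundedness of $S$ together with the estimate \eqref{eqn:R-0Y}. This gives $\|S(\widehat{\Gg}_{\lam,Y}-\widehat{G}_{0,Y})\|_{L^2_{-\s}}=\|S\widehat{R}_{0,Y}\|_{L^2_{-\s}}\leq C\lam^\d$ for any $0<\d<\s-1$.

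For \eqref{eqn:1F}, the plan is to expand
\[
(1+F)^{-1} \;=\; 1 - F + F^2(1+F)^{-1},
\]
and use $F(\lam) = -N^{-1}g(\lam)^{-1}\tD$ from \eqref{eqn:F}. Applied to $\widehat{\Gg}_{\lam,Y}=\hat g+\widehat{G}_{0,Y}+\widehat{R}_{0,Y}$ and then composed with $S$, the leading piece becomes
\[
S\,\widehat{\Gg}_{\lam,Y} \;-\; SF\widehat{\Gg}_{\lam,Y}
\;=\; S\widehat{G}_{0,Y} + S\widehat{R}_{0,Y}
+ N^{-1}S\tD\hat{\bf 1} + N^{-1}g^{-1}\,S\tD(\widehat{G}_{0,Y}+\widehat{R}_{0,Y}),
\]
using again that $S\hat g=0$ and the crucial cancellation $g^{-1}\hat g = \hat{\bf 1}$ that converts $-SF\hat g$ into the $\lam$-independent constant vector $N^{-1}S\tD\hat{\bf 1}$. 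Subtracting $S(\widehat{G}_{0,Y}+N^{-1}\tD\hat{\bf 1})$ leaves the three remainder pieces
\[
S\widehat{R}_{0,Y}, \qquad N^{-1}g^{-1}S\tD\widehat{G}_{0,Y}, \qquad N^{-1}g^{-1}S\tD\widehat{R}_{0,Y},
\]
plus the higher-order tail $SF^2(1+F)^{-1}\widehat{\Gg}_{\lam,Y}$. Each of these I would estimate separately in $L^2_{-\s}$: the first by $\lam^\d$ (using \eqref{eqn:R-0Y}) and since $\lam^\d|g(\lam)|\to 0$, it is $O(|g|^{-1})$; the second by $|g|^{-1}\|\widehat{G}_{0,Y}\|_{L^2_{-\s}}$, where $\|\widehat{G}_{0,Y}\|_{L^2_{-\s}}<\infty$ because $\s>1$ tames the logarithm at infinity; the third is $O(|g|^{-1}\lam^\d)=O(|g|^{-1})$; and the fourth uses $\|F^2(1+F)^{-1}\|=O(g^{-2})$ together with the bound $\|\widehat{\Gg}_{\lam,Y}\|_{L^2_{-\s}}\leq C\langle g\rangle$ from \eqref{eqn:2nd}--\eqref{eqn:R-3}, giving $O(g^{-1})$ as well.

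The continuous-extension statement at the end then follows immediately: \eqref{eqn:SGL} identifies the boundary value of $S\widehat{\Gg}_{\lam,Y}$ at $\lam=0$ as $S\widehat{G}_{0,Y}$, and \eqref{eqn:1F} identifies that of $S(1+F)^{-1}\widehat{\Gg}_{\lam,Y}$ as $S(\widehat{G}_{0,Y}+N^{-1}\tD\hat{\bf 1})$. The only subtle point in the whole argument, and the one I would verify carefully, is the comparison $\lam^\d = o(|g(\lam)|^{-1})$ as $\lam\to 0$, which is immediate from $g(\lam)\sim -(2\pi)^{-1}\log(\lam/2)$; aside from that, everything reduces to a bookkeeping expansion of $(1+F)^{-1}$ in powers of $g^{-1}$ combined with the projection identity $S\hat{\bf 1}=0$.
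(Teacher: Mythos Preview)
Your proposal is correct and follows essentially the same route as the paper: both arguments use $S\hat g=0$ to reduce \eqref{eqn:SGL} to the remainder bound \eqref{eqn:R-0Y}, and both prove \eqref{eqn:1F} by expanding $(1+F)^{-1}=1-F+F^2(1+F)^{-1}$, isolating the $\lam$-independent piece $S(\widehat{G}_{0,Y}+N^{-1}\tD\hat{\bf 1})$, and estimating the same four remainder terms in $L^2_{-\s}$. Your bookkeeping is in fact slightly cleaner (the paper writes the $\widehat{R}_{0,Y}$ contribution from the $-F$ term as $F\widehat{R}_{0,Y}$ rather than $-F\widehat{R}_{0,Y}$, an immaterial sign slip for the estimate).
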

\begin{proof} 
Since $S \hat{g}=0$, we have 
$S(\widehat{\Gg}_{\lam, Y} - \widehat{G}_{0,Y})= S\widehat{R}_{0,Y}(\lam, x)$ 
and \refeq(SGL) follows from \refeq(R-0Y).  We write 
\bqn \lbeq(65)
S (1+ F)^{-1}\widehat{\Gg}_{\lam,Y} = 
S (\widehat{\Gg}_{\lam,Y} - F \widehat{\Gg}_{\lam,Y} + F^{2}(1+ F)^{-1}\widehat{\Gg}_{\lam,Y}) 
\eqn 
and, on the right hand side, { we} substitute \refeq(64) for first two 
$\widehat{\Gg}_{\lam,Y}$, 
$F = -N^{-1}g^{-1} \tD$ in the second term, use $S\hat{g}=0$ and 
arrange so that { in the formula below} the terms in the first line 
are independent of $\lam$, { while} those in the second line are bounded 
by $C \la g(\lam)^{-1}\ra $ in $L^2_{-\s}$ as $\lam \to 0$:  
\begin{align*}
& \refeq(65) = S (\widehat{G}_{0,Y} +  N^{-1}\tD \hat {\bf 1} %\lbeq(R-1) 
\\
& \qquad +  N^{-1}g^{-1} \tD \widehat{G}_{0,Y} + \widehat{R}_{0,Y}(\lam) + 
F \widehat{R}_{0,Y}(\lam) + F^{2}(1+ F)^{-1}\widehat{\Gg}_{\lam,Y}). % \lbeq(R-2)
\end{align*}
This proves \refeq(1F). 
\end{proof}

We now start proving each statement of \refth(zerolimit) separately. 
By virtue of \refcor(first-term), we only have to study  
$\la \widehat{\Gg}_{\lam,Y}(x), \Ga(\lam)^{-1} \widehat{\Gg}_{\lam,Y}(y) \ra $ 
when $\Ga(\lam)^{-1}$ is replaced by the second terms in the first formulas for 
$\Gamma(\lam)$ in Lemmas \ref{lm:5-1}, \ref{lm:5-2}, \ref{lm:5-4}, and \ref{lm:5-5} for the corresponding cases. 

\subsection{{ Proof of Theorem \ref{th:zerolimit}(1)}} 

\paragraph{} We use \reflm(5-1). When 
$\Ga(\lam)^{-1}$ is replaced by \refeq(G1), 
$\la \widehat{\Gg}_{\lam,Y}(x), \Ga(\lam)^{-1} \widehat{\Gg}_{\lam,Y}(y) \ra $ 
becomes 
\[
\la S(1+ F)^{-1}S \widehat{\Gg}_{\lam,Y}(x), 
\Big([S\tD S]^{-1}+ O(g^{-1})\Big)S
(1+ F)^{-1}S \widehat{\Gg}_{\lam,Y}(y) \ra + O(\lam^2 g)
\]
where we used that 
$\|\widehat{\Gg}_{\lam,Y}\|_{L^2_{-\s}}\leq C \la g \ra$ to 
obtain the term $O(\lam^2 g)$. Statement (1) immediately follows by 
applying \reflm(1FG). 

\subsection{{ Proof of Theorem \ref{th:zerolimit}(2)}} 

\paragraph{Proof of statement (2-a)}

We apply \reflm(5-2) and \refcor(first-term). 
Replacing $\Ga(\lam)^{-1}$ in 
$\la \widehat{\Gg}_{\lam,Y}(x), \Ga(\lam)^{-1} \widehat{\Gg}_{\lam,Y}(y) \ra $ by \refeq(S1) 
produces 
\bqn \lbeq(75)
\la (1+ F)^{-1}S \widehat{\Gg}_{\lam,Y}(x), 
T\Big(g [T\tD^2 T]^{-1}+ O(1)\Big)T
(1+ F)^{-1}S \widehat{\Gg}_{\lam,Y}(y) \ra +O(\lam^2 g^3).
\eqn 
Thus, if $T = \fb \otimes \fb$ with normalized $\fb\in \C^N$ and 
$\la \fb, \tD^2 \fb \ra= \c_0^{-2}$, $\c_0>0$ then \reflm(1FG) implies that 
\[
\refeq(75) = \c_0^2 g \ph(x) \ph(y) + O(1), \quad 
\ph(x)= \la \widehat{G}_{0,Y}(x) + N^{-1} \tD \hat{\bf 1},\fb \ra 
\]
Here $f_1+ \cdots+ f_N=0$ as $\fb=(f_1, \dots, f_N)\in S\C^N$ and 
\begin{multline}
\la \fb, \hat{G}_{0,Y}(x) \ra = - \frac1{2\pi} \sum f_j \log(|x-y_j|) 
=- \frac1{2\pi} \sum f_j \Big(\log(|x-y_j|) -\log |x|\Big) \\
=\sum f_j \Big(\int_0^1 \frac{(x-\th y_j)y_j}{2\pi |x-\theta y_j|^2}d\theta\Big)  
= \sum f_j \frac{y_j \cdot x}{|x|^2} + O(|x|^{-2}).  \lbeq(m-1)
\end{multline}
In the matrix representation of $\tD$ in \reflm(td2t), $\eb$ is represented 
by $\begin{pmatrix} 1 \\ {\bf 0} \end{pmatrix}$ and ${\bf \hat{1}}$ by 
$\begin{pmatrix} \sqrt{N} \\ {\bf 0} \end{pmatrix}$. It follows  that 
$\tD \hat{\bf 1}$ is given by the vector 
$\begin{pmatrix} a \\ {\bf a} \end{pmatrix}$.
Then, $T \tD \hat{\bf 1} = T {\bf a}$ which does not vanish 
as $T \tD^2 T$ is non-singular in $T\C^N$ (see the proof of \reflm(td2t)).   
Hence $\la \tD \hat{\bf 1},\fb\ra \not=0$ and this proves statement (2-a).

\paragraph{Proof of statement (2-b)} We use \reflm(5-4) and \refcor(first-term). 
As { before}, we only have to study 
\bqn \lbeq(92)
N^{-1} g^{-1}\lam^{-2}  \la (1+ F)^{-1}S \widehat{\Gg}_{\lam,Y}(x), 
T[T\Gg_1(Y) T]^{-1}T S(1+ F)^{-1} 
\widehat{\Gg}_{\lam,Y}(y) \ra 
\eqn 
and the remainder is of order $O(\lam^{-2}g^{-2})$. 
We diagonalize the symmetric matrix as
\[
T[T\Gg_1(Y)T]^{-1}T = \sum_{j=1}^n a_j {\fb}_j \otimes {\fb}_j , \quad n = {\rm rank}\, T\, ,
\] 
where $a_j \in \R\setminus\{0\}$ and $\fb_j \in T\C^N$, $j=1, \dots, N$ { can be chosen to be real}. Then, 
\reflm(1FG) implies 
\[ 
\refeq(92)= N^{-1} g^{-1}\lam^{-2} \sum_{j=1}^n a_j \ph_j(x)\ph_j(y), \quad 
\ph_j(x)= \la \fb_j, \hat{G}_{0,Y}(x)+ N^{-1}\tD \hat{\bf 1} \ra. 
\] 
Here $\la \fb_j, N^{-1}\tD \hat{\bf 1} \ra=0$ since $\tD\fb_j=0$,    
\begin{align*} 
\ph_j(x)= \la \fb_j, \hat{G}_{0,Y}(x) \ra & = \sum_{k=1}^N  f_{jk} \log(|x-y_k|) 
= -2 \sum_{k=1}^N f_{jk} \frac{y_k \cdot x}{|x|^2} + O(|x|^{-2}).
\end{align*}
{ We must have that}  $\sum f_{jk} y_k \not=0$ for $j=1, \dots, n$ because for every { real vector $\fb\in T\C^N$ we have:}
\bqn \lbeq(TG1Y)
\la T\Gg_1(Y)T\fb, \fb \ra =  -\frac1{4\pi}\sum_{j,k=1}^N |y_j-y_k|^2 f_j f_k 
= \frac1{2\pi}\Big(\sum_{j=1}^N f_j y_j \Big)^2>0
\eqn 
{ where we use the assumption that $T\Gg_1(Y)T$ is non-singular}. 

\paragraph{Proof of statement (2-c)} We  use \reflm(5-5). As in the proof of  
statement (2-b), we only need to study 
\bqn \lbeq(97)
-N^{-1}\lam^{-2} \la \hat{G}_{0,Y}(x), T_1[T_1\Gg_2(Y)T_1]^{-1}T_1 \hat{G}_{0,Y}(y)\ra
\eqn 
and the remainder is $O(\lam^{-2}g^{-1})$. 
If we { diagonalize } 
\[
T_1 \tilde{\Gg}_2(Y)T_1 = \sum_{j=1}^m a_j {\ab}_j \otimes {\ab}_j , \quad m = {\rm rank}\, T_1, 
\] 
then we have 
$$
\refeq(97)= -N^{-1}\lam^{-2} \sum_{j=1}^n {a_j} \p_j(x) \otimes \p_j(y), \quad 
\p_j(x)= \la \ab_j, \hat{G}_{0,Y}(x)\ra.
$$
Here we have $\ab_j ={}^t (a_{j1}, \dots, a_{jN}) \in T_1\C^N \subset T\C^N \subset S\C^n$, hence, 
\[
a_{j1}+ \dots +a_{jN}=0, \quad \tD \ab_j=0, \quad a_{j1}y_1+ \dots +a_{jN}y_N=0
\]
where the last equation is the result of \refeq(TG1Y) and $\la T\Gg_1(Y)T\fb, \fb \ra=0$ 
for $\fb\in T_1\C^N$. It follows from { Proposition}  \ref{lemahoria1} that $\psi_j(x)$, $j=1, \dots, m$ 
are all eigenfunctions of $H_{\a, Y}$ with eigenvalue zero. 
This completes the proof of \refth(zerolimit). \qed 

\section{Proof of Theorem \ref{th:1}}\label{section4}
We only prove the theorem for $W_{+}$. 
The complex conjugation $u\mapsto \mathcal{C}u=\overline{u}$ then 
gives the proof for $W_{-}= \mathcal{C}^\ast W_{+} \mathcal{C}$. 
In what follows we assume $H_{\a,Y}$ is of regular type and 
the results of \reflm(5-1) are satisfied. 

\subsection{Stationary representation of the wave operators}
We use the {\it stationary representation} of $W_{+}$ as in the three 
dimensional case (see \cite{DMSY}). We need some preparation. 
We set  
\bqn 
\Dg_\ast= \{u \in \Sg(\R^2) \ | \ \hat u \in C_0^\infty(\R^2 \setminus\{0\})\}.
\eqn 

\begin{lemma} \lblm(density) For { every $n\in\N$}, $\Dg_\ast$ is a dense subspace 
of $L^p(\R^n)$ for all $1 < p<\infty$. 
\end{lemma}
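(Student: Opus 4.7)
The plan is to argue by duality. By the Hahn--Banach theorem, $\Dg_\ast$ is dense in $L^p(\R^n)$ if and only if every $g\in L^{p'}(\R^n)$, with $1/p+1/p'=1$, satisfying $\int_{\R^n} g(x)u(x)\,dx=0$ for all $u\in\Dg_\ast$ is in fact zero. So I fix such a $g$ and aim to show $g\equiv 0$.

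Since $1<p'<\infty$, the function $g$ lies in $\Sg'(\R^n)$, so its Fourier transform $\hat g\in\Sg'(\R^n)$ is well defined. For every $\phi\in C_0^\infty(\R^n\setminus\{0\})$, the function $u=\mathcal{F}^{-1}\phi$ belongs to $\Dg_\ast$, and the duality identity for the Fourier transform gives $\langle \hat g,\phi\rangle=\langle g,\mathcal{F}^{-1}\phi\rangle=\langle g,u\rangle=0$. Hence $\hat g$ annihilates every test function supported away from the origin, i.e. $\mathrm{supp}\,\hat g\subset\{0\}$.

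By the standard structure theorem for distributions supported at a single point, $\hat g$ must be a finite linear combination $\sum_{|\alpha|\leq N}c_\alpha\,\partial^\alpha\delta_0$. Taking the inverse Fourier transform, $g$ is equal almost everywhere to a polynomial on $\R^n$. Since a nonzero polynomial does not decay at infinity, it cannot belong to any $L^{p'}(\R^n)$ with $p'<\infty$. Therefore $g=0$, which by Hahn--Banach proves the density of $\Dg_\ast$ in $L^p(\R^n)$ for every $1<p<\infty$.

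The only delicate point is the book-keeping with the Fourier transform convention adopted in the paper when justifying the identity $\langle \hat g,\phi\rangle=\langle g,\mathcal{F}^{-1}\phi\rangle$ on $\Sg'\times \Sg$; everything else---the structure of distributions supported at a single point and the non-integrability of nonzero polynomials---is classical. I note that the hypothesis $p>1$ is essential: for $p=1$ we would have $p'=\infty$ and the argument would break down since nonzero constants lie in $L^\infty(\R^n)$, so the ``polynomial in $L^{p'}$'' step would no longer force $g$ to vanish.
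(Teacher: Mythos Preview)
Your proof is correct and takes a genuinely different route from the paper. The paper argues constructively: starting from an arbitrary $f\in\Sg(\R^n)$ (using that $\Sg$ is dense in $L^p$), it explicitly approximates $f$ by $u=(1-\chi_\rho(D))\chi_N(D)f\in\Dg_\ast$, where $\chi_\rho,\chi_N$ are smooth radial cutoffs at low and high frequency respectively, and then shows directly via Minkowski's and Young's inequalities that $\|(\chi_N(D)-1)f\|_p\lesssim N^{-1}\|\nabla f\|_p$ and $\|\chi_\rho(D)f\|_p\lesssim \rho^{\,n(1-1/p)}\|f\|_1$. Your duality argument is cleaner and more conceptual: it makes transparent why the hypothesis $p>1$ enters (so that $p'<\infty$ and nonzero polynomials lie outside $L^{p'}$), whereas in the paper this is hidden in the Young bound $\rho^{\,n(1-1/p)}$, which fails to vanish when $p=1$. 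Conversely, the paper's approach yields an explicit approximation scheme with quantitative rates. Your caveat about the Fourier pairing is harmless here: with either the bilinear convention $\langle\hat g,\phi\rangle=\langle g,\hat\phi\rangle$ or the one you wrote, the test function appearing on the right is in $\Dg_\ast$ whenever $\phi\in C_0^\infty(\R^n\setminus\{0\})$ (since $\mathcal{F}^2\phi(\xi)=\phi(-\xi)$ in the paper's symmetric convention), so the conclusion $\mathrm{supp}\,\hat g\subset\{0\}$ follows regardless.
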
 
\begin{proof} 
It suffices to show that { having fixed $f \in \Sg(\R^n)$, then for every $\ep>0$} there exists a $u \in \Dg_\ast$ such that $\|f-u\|_p <\ep$. 
Take a $\chi\in C_0^\infty(\R^n)$ { with $0\leq \chi\leq 1$ }such that $\chi(\xi)=1$ for $|\xi|<1$ and 
$\chi(\xi)=0$ for $|\xi|\geq 2$ and set $\chi_\r(\xi)=\chi(\xi/\r)$.  { If we define }  
$ u = (1-\chi_\r(D))\chi_N(D) f \in \Dg_\ast$ then  
\[
\|f-u\|_p \leq \|f-\chi_N(D)f\|_p + { \|\chi_\r(D)f\|_p }
\]
and  
it suffices to show that $\|(\chi_N(D)-1)f\|_p \to 0$ as $N\to \infty$ 
and $\|\chi_\r (D) f\|_p \to 0$ as $\r \to 0$. To see $\|(\chi_N(D)-1)f\|_p \to 0$ as $N\to \infty$, we write 
\begin{align*}
(\chi_N(D)-1)f(x)
& ={(2\pi)^{-1}}\int_{\R^n} \Big(N^n { (\mathcal{F}^{-1}{\chi})}(N(x-y))f(y)-f(x)\Big) dy \\
& ={(2\pi)^{-1}}\int_{\R^n} { (\mathcal{F}^{-1}{\chi})}(y)(f(x+N^{-1}y)-f(x)) dy \\
&= {(2\pi)^{-1}}\int_{\R^n} N^{-1}y { (\mathcal{F}^{-1}{\chi})}(y)\cdot 
\left(\int_0^1 \nabla f(x+\theta N^{-1}y)d\theta\right) dy 
\end{align*}
and apply Minkowski's inequality to obtain 
\bqn \lbeq(1p)
\|(\chi_N(D)-1)f(x)\|_p \\
\leq  {(2\pi)^{-1}}N^{-1}\||y|{ (\mathcal{F}^{-1}{\chi})}\|_1  \|\nabla f\|_p. 
\eqn 
For the second { limit} we apply Young's inequality and obtain  
\[
\|\chi_\r (D) f \|_p = {(2\pi)^{-1}}\|{ (\mathcal{F}^{-1}{\chi_\r})} \ast f \|_p 
\leq  { (2\pi)^{-1}\r^{n(1-1/p)} \|(\mathcal{F}^{-1}{\chi})\|_p \|f\|_1}. 
\]
\end{proof}

We define the operator $\Omega_{jk}$, $j,k=1,\dots,N$ such that 
$(\Omega_{jk}u)(x)$ for $u \in \Dg_\ast$ is given by 
\[
\frac{1}{\pi\ii}\:\lim_{\delta\downarrow 0}
\int_0^{+\infty} 
\lambda\,e^{-\delta\lambda}
\overline{(\Gamma_{\alpha,Y}(\lambda)^{-1})_{jk}}\, \mathcal{G}_{-\lambda}(x)
\left(\int_{\R^2} 
\big(\mathcal{G}_{\lambda}(y)- 
\mathcal{G}_{-\lambda}(y)\big) u(y) \ud y\right) \ud\lambda\, .
\]
Then the following lemma may be proved by repeating line by line the proof 
of Proposition 3.2 of \cite{DMSY} for the corresponding formula in three dimensions.  
\begin{lemma} Let $(T_{x_0}f)(x):=f(x-x_0)$ be the translation operator 
by $x_0$. Then, for $u, v \in \Dg_\ast$,  
\begin{equation}\label{eq:stationary_representation}
( W^{+}_{\a, Y} u,v)\;=\;( u,v ) 
+\sum_{j,k=1}^N ( T_{y_j}
\Omega_{jk}T_{y_k}^{\,*}u,v)\, . 
\end{equation}
\end{lemma}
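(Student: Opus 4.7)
The plan is to derive \eqref{eq:stationary_representation} by adapting to two dimensions the stationary representation argument used for the three-dimensional analogue in Proposition 3.2 of \cite{DMSY}. The starting point is the Abel-summable form of the wave operator,
\[
(W_+u,v)-(u,v) \;=\; \lim_{\delta\downarrow 0}\int_0^{+\infty} e^{-\delta t}\,\frac{d}{dt}\big(e^{itH_{\a,Y}}e^{-itH_0}u,v\big)\,dt,
\]
which, combined with Stone's formula applied to $H_0$ and the resolvent identity \refeq(eq:resolvent_identity), converts the time integral into a spectral integral on the positive half-axis. Throughout, the assumption $u,v\in\Dg_\ast$ (dense in every $L^p$ by \reflm(density)) is used so that $\hat u,\hat v$ are supported in a compact subset of $\R^2\setminus\{0\}$.

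First, I would insert Stone's formula for $H_0$,
\[
\mathbb{1}_{(0,\infty)}(H_0)\,u \;=\; \frac{1}{2\pi i}\int_0^{+\infty}\!\!\big[(H_0-E-i0)^{-1}-(H_0-E+i0)^{-1}\big]u\,dE,
\]
and perform the change of variable $E=\lambda^2$, $\lambda>0$. The kernel of $(H_0-E\mp i0)^{-1}$ being $\mathcal{G}_{\pm\lambda}$, this produces precisely the factor $\mathcal{G}_{\lambda}(y)-\mathcal{G}_{-\lambda}(y)$ occurring inside the definition of $\Omega_{jk}$, together with the weight $\lambda\,d\lambda$.

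Second, using the time derivative to bring down $H_{\a,Y}-H_0$, one rewrites the expression in terms of the resolvent difference and substitutes the factored form
\[
(H_{\a,Y}-z^2)^{-1}-(H_0-z^2)^{-1} \;=\; \sum_{j,k=1}^N \{\Gamma_{\a,Y}(z)^{-1}\}_{jk}\,\mathcal{G}_z(\cdot-y_j)\otimes\overline{\mathcal{G}_z(\cdot-y_k)}\,.
\]
This splits the contribution into $N^2$ rank-one terms. The translates $\mathcal{G}_{-\lambda}(x-y_j)=(T_{y_j}\mathcal{G}_{-\lambda})(x)$ and $(\mathcal{G}_\lambda-\mathcal{G}_{-\lambda})(y-y_k)=(T_{y_k}(\mathcal{G}_\lambda-\mathcal{G}_{-\lambda}))(y)$ can then be pulled out of the operator, and pairing the second against $u$ produces the inner integral $\int(\mathcal{G}_\lambda(y)-\mathcal{G}_{-\lambda}(y))(T_{y_k}^*u)(y)\,dy$ appearing in $\Omega_{jk}$; the complex conjugation on $(\Gamma_{\a,Y}(\lambda)^{-1})_{jk}$ is picked up from the adjoint action of the second rank-one factor on $v$, where $T_{y_k}^*=T_{-y_k}$.

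Third, I would pass to the limit $\delta\downarrow 0$ in the Abel regularization. This is the main obstacle: one must justify commuting the limit with the $\lambda$-integral and with the removal of the $\pm i0$ spectral regularizations. Two facts make this possible and allow the three-dimensional proof of \cite{DMSY} to be transcribed essentially line by line. First, $\hat u$ and $\hat v$ being compactly supported away from $\lambda=0$ and $\lambda=\infty$ confines the integrand to a fixed compact range of $\lambda$. Second, the regular-type hypothesis together with \reflm(5-1) gives the uniform bounds $\partial_\lambda^\ell[\Gamma_{\a,Y}^{-1}]_{jk}\absleq C\lambda^{-\ell}$ on a neighborhood of the threshold, while \refeq(large-hankel) controls the behaviour of $\mathcal{G}_\lambda$ for large $\lambda|x|$. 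These provide the dominating bounds required to apply dominated convergence, producing the stated identity \eqref{eq:stationary_representation}.
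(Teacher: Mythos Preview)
Your approach is correct and coincides with the paper's own, which simply defers to Proposition~3.2 of \cite{DMSY} and says the argument carries over line by line. One minor point: the regular-type hypothesis and \reflm(5-1) are not needed for this lemma, since for $u,v\in\Dg_\ast$ the spectral integrand is already supported in a fixed compact subset of $(0,\infty)$ away from the threshold; the paper makes exactly this observation just after the lemma (see \refeq(poisson) and \refeq(Wjk)) to remove the damping factor $e^{-\delta\lambda}$.
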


{ In order to prove our theorem it} suffices to show that 
\bqn \lbeq(aim)
\|\W_{jk}u\|_p \leq C \|u\|_p, \quad u \in \Dg_\ast, \quad j,k=1, \dots, N 
\eqn 
for any $1<p<\infty$ and for a constant $C$ independent of $u$. 
We first remark here that the damping factor $e^{-\d\lam}$ in the definition of $\W_{jk} u$ 
is unnecessary. To see this we first note that $(\xi^2-z^2)^{-1}$ 
has a limit in $\Sg'(\R^2)$ as $z \to -\lam +i0$, $\lam>0$ and,  for  $v\in \Dg_\ast$:  
\begin{align}
& \int_{\R^2} \overline{v(x)}\Gg_{-\lam}(x)dx = \lim_{\ep \downarrow 0}
\big\la v, \Gg_{-\lam+i\ep} \big\ra 
= \lim_{\ep \downarrow 0} \frac1{2\pi}\big ( v, {\mathcal F}^\ast  (\xi^2-(-\lam+i\ep)^2)^{-1} \big ) 
\notag 
\\
& \qquad = \lim_{\ep \downarrow 0} \frac1{2\pi}\big ( {\mathcal F}v, (\xi^2-(\lam-i\ep)^2)^{-1} \big ) 
= \lim_{\ep \downarrow 0}\frac1{2\pi}  \int_{\R^2} \frac{\overline{\hat{v}(\xi)}}
{\xi^2-\lam^2 +i\ep}d\xi.  \lbeq(rem-def)
\end{align}
Then, as a function of 
$\lam$  
\begin{multline} \lbeq(poisson)
\int_{\R^2}\big(\mathcal{G}_{\lambda}(y)- 
\mathcal{G}_{-\lambda}(y)\big)u(y)\ud y \\
= \lim_{\ep\downarrow 0}\frac1{2\pi} \int_{\R^2}\Big(
\frac{1}{\eta^2-\lam^2-i\ep}- 
\frac{1}{\eta^2-\lam^2+ i\ep}\Big) \hat{u}(\eta)\ud \eta 
= \frac{i}{2}\int_{{\mathbb S}^1} \hat {u}(\lam \w)d\w  
\end{multline} 
is of class $C_0^\infty((0,\infty))$. Then { we have:}  
\bqn \lbeq(Wjk)
(\Omega_{jk}u)(x)= 
\frac{1}{\pi\ii}
\int_0^{+\infty} \lam \overline{(\Gamma_{\alpha,Y}(\lambda)^{-1})_{jk}}\, \mathcal{G}_{-\lambda}(x)
\left(\int_{{\mathbb S}^1} \hat{u}(\lam \w) d\w \right) \ud\lambda\, . 
\eqn 

\subsection{Decomposition of the operator $\W_{jk}$} 
For { simplicity}  we define for $j,k=1,\dots, N$ 
$$
\tGa_{jk}(\lam)= [\overline{\Gamma_{\alpha,Y}(|\lam|)}^{-1}]_{jk}. 
$$
We let $\tGa_{jk}(|D|)$ and $K$ be the operators defined for $u\in \Dg_\ast$ respectively by  
\begin{gather} 
\tilde{\Ga}_{jk}(|D|) u(x) = \frac1{2\pi}\int_{\R^2} e^{ix\xi}\,
\tGa_{jk}(|\xi|)({\mathcal F}{u})(\xi) d\xi, \lbeq(tga) 
\\
Ku(x) = \frac{1}{\pi\ii}
\int_0^{+\infty} \mathcal{G}_{-\lambda}(x) \lam 
\left(\int_{{\mathbb S}^1} ({\mathcal F}{u})(\lam \w) d\w \right) \ud\lambda\, .\nonumber
\end{gather} 
\begin{lemma} 
\lblm(1) 
For every $j,k=1, \dots, N$ the operator $\Omega_{jk}$ is the product of $\tilde{\Ga}_{jk}(|D|)$ 
and $K$: 
\bqn \lbeq(prod)
(\Omega_{jk}u)(x)= \big(K \circ \tilde{\Ga}_{jk}(|D|)\big) u(x), \quad u\in \Dg_\ast.
\eqn  
\end{lemma}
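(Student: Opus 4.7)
The plan is to read off \refeq(prod) as a near-tautological rearrangement of the definition of $\W_{jk}$. The key observation is that $\tilde\Ga_{jk}(|D|)$, viewed as a radial Fourier multiplier, preserves $\Dg_\ast$, after which the action of $K$ on $\tilde\Ga_{jk}(|D|)u$ assembles exactly into the formula \refeq(Wjk).

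First I will verify that $\tilde\Ga_{jk}(|D|)$ maps $\Dg_\ast$ into itself. For every $u \in \Dg_\ast$, $\hat u$ lies in $C_0^\infty(\R^2 \setminus \{0\})$, so only values of $\tilde\Ga_{jk}(|\xi|)$ with $|\xi|$ in a compact subinterval of $(0,\infty)$ play a role in \refeq(tga). On any such interval $\tilde\Ga_{jk}$ is smooth: the entries of $\Ga_{\a,Y}(\lam)$ given by \refeq(ga-def) are real analytic in $\lam\in(0,\infty)$, the matrix $\Ga_{\a,Y}(\lam)$ is invertible there because $H_{\a,Y}$ has no positive eigenvalues, and the regular-type hypothesis together with \refeq(8-1) of \reflm(5-1) controls $\tilde\Ga_{jk}$ uniformly down to $\lam = 0$. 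Thus $v := \tilde\Ga_{jk}(|D|)u$ satisfies $\hat v(\xi) = \tilde\Ga_{jk}(|\xi|)\hat u(\xi) \in C_0^\infty(\R^2 \setminus \{0\})$, so $v \in \Dg_\ast$.

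With $v \in \Dg_\ast$ well defined, I will apply $K$ to $v$ and substitute $\hat v(\lam\w) = \tilde\Ga_{jk}(\lam)\hat u(\lam\w)$. Pulling the scalar factor $\tilde\Ga_{jk}(\lam) = \overline{[\Ga_{\a,Y}(\lam)^{-1}]_{jk}}$ out of the integral over $\mathbb{S}^1$ gives
\begin{equation*}
(Kv)(x) \;=\; \frac{1}{\pi\ii}\int_0^{+\infty}\lam\,\mathcal{G}_{-\lam}(x)\,\overline{[\Ga_{\a,Y}(\lam)^{-1}]_{jk}}\,\Big(\int_{\mathbb{S}^1}\hat u(\lam\w)\,d\w\Big)\,d\lam,
\end{equation*}
which matches \refeq(Wjk) term by term, proving \refeq(prod). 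All formal manipulations are trivially justified because the $\lam$-integration is effectively over a compact subset of $(0,\infty)$ on which every factor is continuous.

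I do not expect any real obstacle; the content here is purely bookkeeping. The role of the regular-type hypothesis is simply to ensure, via \reflm(5-1), that $\tilde\Ga_{jk}$ extends continuously across $\lam = 0$, which is exactly what allows $\tilde\Ga_{jk}(|D|)$ to be treated as a bona fide Fourier multiplier and which sets up the eventual splitting of the $L^p$-boundedness of $W_{+}$ into separate estimates for the operators $K$ and $\tilde\Ga_{jk}(|D|)$.
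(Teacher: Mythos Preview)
Your proposal is correct and follows essentially the same approach as the paper: both recognize that the radial factor $\tGa_{jk}(\lam)$ can be absorbed into the Fourier transform of $u$, so that \refeq(Wjk) becomes $K$ applied to $\tilde\Ga_{jk}(|D|)u$. Your added verification that $\tilde\Ga_{jk}(|D|)$ preserves $\Dg_\ast$ is a reasonable elaboration; the only quibble is that your closing remark about the regular-type hypothesis is superfluous for \emph{this} lemma, since for $u\in\Dg_\ast$ the support of $\hat u$ already avoids a neighbourhood of the origin and the behaviour of $\tGa_{jk}$ at $\lam=0$ never enters.
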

\begin{proof} We may write the right hand side of \refeq(Wjk) in the form 
$$
\frac{1}{\pi\ii}
\int_0^{+\infty} {\lam} \mathcal{G}_{-\lambda}(x)
\left(\int_{{\mathbb S}^1} 
[\overline{\Gamma_{\alpha,Y}(\lambda)}^{-1}]_{jk} ({\mathcal F}{u})(\lam \w) d\w \right) \ud\lambda .
$$
Here $[\overline{\Gamma_{\alpha,Y}(\lambda)}^{-1}]_{jk}({\mathcal F}{u})(\lam \w)= 
{\mathcal F}(\tilde{\Ga}_{jk}(D) u)(\lam\w)$ by the definition of 
$\tilde{\Ga}_{jk}(|D|)$. The lemma follows. 
\end{proof} 

\subsection{Estimate of $Ku$} 
In what follows we shall prove that both $K$ and $\tGa_{jk}(|D|)$, $j,k =1, \dots, N$ are 
bounded operators from 
$L^p(\R^2)$ to itself for $1<p<\infty$. We deal with $K$ first. 

\begin{lemma} 
\lblm(2) 
For any $1<p<\infty$, there exists a constant $C>0$ such that 
\[
|\la v, Ku \ra |\leq C \|u\|_p \|v\|_{p'}, \quad u ,v\in \Dg_\ast 
\]
and $K$ extends to a bounded operator from $L^p$ to itself. 
\end{lemma}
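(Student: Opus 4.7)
The plan is to compute the integral kernel of $K$ explicitly and then decompose $K$ into two operators whose $L^p$-boundedness is classical. First, using the spherical identity $\int_{\mathbb{S}^1}e^{-\ii\lambda y\cdot\omega}d\omega=2\pi J_0(\lambda|y|)$ to rewrite the inner integral in the definition of $K$, the operator becomes an integral operator $Ku(x)=\int_{\R^2}K_0(x,y)u(y)\,dy$ with kernel
\[
K_0(x,y)=\frac{1}{\pi\ii}\int_0^\infty\Gg_{-\lambda}(x)J_0(\lambda|y|)\lambda\,d\lambda.
\]
Substituting $\Gg_{-\lambda}(x)=-\tfrac{\ii}{4}H_0^{(2)}(\lambda|x|)$, which follows from the reflection formula $H_0^{(1)}(-z)=-H_0^{(2)}(z)$, and invoking the distributional Hankel identity
\[
\int_0^\infty H_0^{(2)}(a\lambda)J_0(b\lambda)\lambda\,d\lambda=\frac{2\ii}{\pi(b^2-a^2+\ii 0)},
\]
which is obtained by inverting the Green-function representation $\Gg_z(x)=(2\pi)^{-1}\int_0^\infty J_0(\rho|x|)\rho\,d\rho/(\rho^2-z^2)$ along $z=-a+\ii\epsilon$, $\epsilon\downarrow 0$, the kernel reduces to
\[
K_0(x,y)=-\frac{\ii}{2\pi^2(|y|^2-|x|^2+\ii 0)}.
\]

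Applying Sokhotski--Plemelj to the last display yields the decomposition $K=-\tfrac12\,P-\tfrac{\ii}{2\pi^2}\,H$, where $Pu(x)=(2\pi)^{-1}\int_{\mathbb{S}^1}u(|x|\omega)\,d\omega$ is the rotational-averaging operator (a contraction on every $L^p$, $1<p<\infty$, by Jensen's inequality) and $Hu(x)=\mathrm{PV}\int_{\R^2}u(y)(|y|^2-|x|^2)^{-1}\,dy$. The kernel of $H$ depends on $x$ only through $|x|$, so $Hu$ is automatically radial; passing to polar coordinates in $y$ and then substituting $t=|y|^2$,
\[
Hu(x)=\pi\,\mathrm{PV}\int_0^\infty\frac{\phi(t)}{t-|x|^2}\,dt,\qquad\phi(t):=(Pu)(\sqrt{t}),
\]
so $Hu(x)$ coincides with $\pi(\mathcal H_{\!+}\phi)(|x|^2)$ for the one-dimensional half-line Hilbert transform $\mathcal H_{\!+}$. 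By classical Calder\'on--Zygmund theory, $\mathcal H_{\!+}$ is bounded on $L^p(\R^+,dt)$ for all $1<p<\infty$; undoing the substitution $t=r^2$ to recover $L^p(\R^2,dx)$-norms and using the $L^p$-contraction of $P$ then yield $\|Hu\|_p\leq C\|u\|_p$, whence the required bound on $K$ follows.

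The principal technical obstacle is justifying rigorously the distributional Hankel identity and the consequent Sokhotski--Plemelj step at the level of operators acting on $\Dg_\ast$. Since elements of $\Dg_\ast$ have Fourier transforms supported in a compact set away from the origin, one may first replace $\Gg_{-\lambda}$ in the definition of $K$ by $\Gg_{-\lambda+\ii\epsilon}$ with $\epsilon>0$, for which all relevant integrals converge absolutely and Fubini applies, carry out the above computations for the regularized operator $K_\epsilon$, and then pass to the limit $\epsilon\downarrow 0$ by the boundary-value argument already employed in \refeq(rem-def). Once the identity $K=-\tfrac12 P-\tfrac{\ii}{2\pi^2}H$ is established on $\Dg_\ast$, the $L^p$-boundedness extends to all of $L^p(\R^2)$ by the density statement of \reflm(density).
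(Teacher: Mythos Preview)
Your argument is correct and lands on the same reduction as the paper: both proofs ultimately show that $K$, after passing to spherical means and substituting $t=r^2$, becomes the one-dimensional Hilbert transform acting on $N_u(t)=M_u(\sqrt{t})$, whose $L^p$-boundedness is classical. The routes to this reduction differ. You compute the integral kernel of $K$ directly via the Hankel-transform identity, obtain $K_0(x,y)=c\,(|x|^2-|y|^2-i0)^{-1}$, and then split off the $\delta$-part (your projector $P$) from the principal-value part (your $H$) by Sokhotski--Plemelj. The paper instead works with the bilinear form $(v,Ku)$ throughout, manipulates it in Fourier variables (the steps leading to \refeq(2-3)), and only then introduces the spherical means $M_u,M_v$ to recognise the answer as $-4\pi\,(N_v,\mathcal F\mathcal R\mathcal F^\ast N_u)$; the Sokhotski--Plemelj splitting never appears explicitly because $\mathcal F\mathcal R\mathcal F^\ast$ already packages both pieces. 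In fact the paper's Remark immediately following the proof records exactly your kernel formula (up to a harmless numerical factor of $4$, which does not affect boundedness). Your approach is arguably more transparent---one sees at once why the problem is one-dimensional---but it leans on the distributional Hankel inversion, which you rightly flag as the step needing care; the paper's Fourier-space computation is more self-contained in this respect, since the only distributional input is the elementary limiting-absorption identity \refeq(rem-def). Either way, the mathematics is the same.
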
 
\begin{proof} 
Let $u, v \in \Dg_\ast$. Define a signed measure $\mu_u$ on $(0,\infty)$ by 
\[
\mu_u (E)= \int_{\lam \in E} \left( \int_{{\mathbb S}^1} \hat{u}(\lam \w) d\w \right) \lam d\lam   
\]
for Borel sets $E$ of $(0,\infty)$. The measure $\m_u$ is supported on a compact 
subset of $(0,\infty)$ and 
\[
(v, Ku) = \frac{1}{\pi{i}}\int_{\R^2} \overline{v(x)} \left( 
\int_0^\infty  \mathcal{G}_{-\lambda}(x)\m_u(d\lam)\right) dx. 
\]
Changing the order of integration by using Fubini theorem, we have 
\bqn \lbeq(2-4-1)
(v, Ku) = \frac{1}{\pi{i}}\int_0^\infty \left( \int_{\R^2}  
\overline{v(x)} \mathcal{G}_{-\lambda}(x) dx \right)\m_u(d\lam). 
\eqn 
Since the limit as $\ep \to 0$ converges uniformly on compact sets of $\lam$ in 
\refeq(rem-def), we may change of order of the limit and the integral in \refeq(2-4-1) 
and, applying Fubini's theorem again we have  
\begin{align}
(v, Ku) & = \lim_{\ep \downarrow 0} \int_0^\infty 
\left(\frac1{2\pi^2{i}}  \int_{\R^2} \frac{\overline{\hat{v}(\xi)}d\xi}
{\xi^2-\lam^2 +i\ep}\right) \m_u(d\lam) \notag \\
& = \lim_{\ep \downarrow 0} 
\frac1{2\pi^2{i}} \int_{\R^2} \overline{\hat{v}(\xi)}    \left( \int_0^\infty 
\frac{\m_u(d\lam)}
{\xi^2-\lam^2 +i\ep} \right) d\xi.   \lbeq(uKv)
\end{align}
Here the inner integral in \refeq(uKv) is equal to  
\[
\int_0^\infty 
\left( \int_{{\mathbb S}^1} \frac{\hat{u}(\lam \w)}{\xi^2-\lam^2 +i\ep} d\w \right) \lam d\lam 
= \int_{\R^2}\frac{\hat{u}(\eta)}{\xi^2-\eta^2 +i\ep} d\eta 
\]
and, Fubini's theorem and the change of variables $(\xi, \eta)$ to $(\xi+ \eta, \eta)$ 
imply 
\[
(v, Ku) = \lim_{\ep \downarrow 0} 
\frac1{2{\pi^2} i} \int_{\R^4}
\frac{\overline{\hat{v}(\xi)}\hat u(\eta)}{\xi^2-\eta^2 +i\ep}d\xi d\eta 
= \lim_{\ep \downarrow 0} \frac{1}{2{\pi^2} i }
\int_{\R^4}\frac{\overline{\hat{v}(\xi+\eta)}\hat{u}(\eta)}{\xi^2+2\xi\eta +i\ep} d\eta d\xi.
\] 
Substituting 
\[
\frac1{\xi^2+2\xi\eta +i\ep}= - i \int_0 ^{\infty} e^{it({\xi^2+2\xi\eta +i\ep})} dt
\]
and using Fubini's theorem once more yield 
\begin{align}\label{hc1}
(v, Ku) = \lim_{\ep \downarrow 0} \frac{-1}{2\pi^2}\int_0^\infty e^{-\ep t} \left\{
\int_{\R^2} e^{it\xi^2} \left(\int_{\R^2}e^{i2t\xi\eta}
\overline{\hat{v}(\xi+\eta)}\hat{u}(\eta)  d\eta  \right) d\xi \right\} dt.  
\end{align}
Apply Parseval identity to the inner most integral and  
change variables $(x,\xi) \to (x, (y-x)/2t)$. Then the function inside 
$\{\cdots \}$ becomes  
$$
\int_{\R^2}e^{it\xi^2} \left(\int_{\R^2}e^{ix\xi}\overline{v(x)} u(x+2t\xi)dx\right) 
d\xi
= \int_{\R^4}e^{i(y^2-x^2)/4t} \overline{v(x)}{u}(y) t^{-2} dx dy. 
$$
{ Introduce this identity in \eqref{hc1}}  and change $t \to 1/4t$: 
\bqn \lbeq(2-3) 
(v, Ku)= \lim_{\ep \downarrow 0} \frac{-2}{\pi^2}\int_0^\infty e^{-\ep/4t} 
\left(\int_{\R^2}e^{-itx^2}\overline{v(x)} dx \right)
\left(\int_{\R^2}e^{ity^2}{u}(y)dy\right) dt. 
\eqn 
Now we introduce the spherical mean: 
\bqn \lbeq(smean)
M_u(r)= \frac{1}{2\pi} \int_{{\mathbb S}^1}u(r\w)d\w, \quad r>0,  
\eqn 
define $N_u(r)=M_u(\sqrt{r})$ for $r>0$ and $N_u(r)=0$ for $r\leq 0$ 
and let ${\mathcal R}$ be the restriction operator to the positive half line: 
\[
({\mathcal R} { f} )(r)= \left\{ 
\begin{array}{cl}  
{ f}(r),&  \quad  r>0,\\
0 ,  & \quad  r \leq 0. 
\end{array}
\right.
\]
Using polar coordinates, we then have 
\begin{align*}
\int_{\R^2}e^{ity^2}{u}(y)dy&= 
2\pi \int_0^\infty e^{itr^2} M_u(r)rdr \\ 
&= \pi \int_{\R} e^{itr} N_u(r)dr
= \sqrt{2\pi}\pi ({\mathcal F}^\ast N_u)(t), 
\end{align*}
where ${\mathcal F}$ is the one dimensional Fourier transform and, likewise 
\[
\int_{\R^2}e^{itx^2}{v}(x)dx= \sqrt{2\pi}\pi ({\mathcal F}^\ast N_v)(t).
\]
Since $({\mathcal F}^\ast N_u)(t), ({\mathcal F}^\ast N_v)(t) \in L^2(\R)$, the limit as $\ep \to 0$ 
in \refeq(2-3) can be trivially taken and Parseval's identity implies 
\begin{align*}
(v, Ku)& = -4\pi \int_0^\infty 
(\overline{{\mathcal F}^\ast N_v)(t)} ({\mathcal F}^\ast N_u)(t) dt \\
& = -4\pi \big ( {\mathcal F}^\ast N_v , {\mathcal R}{\mathcal F}^\ast N_u \big )
= - 4\pi \big ( N_v, {\mathcal F}{\mathcal R}{\mathcal F}^\ast N_u \big ). 
\end{align*} 
As is well known, the operator  
\[
u(x) \mapsto ({\mathcal F}{\mathcal R}{\mathcal F}^\ast u)(x) 
= \frac{i}{2\pi} \int_{\R}\frac{u(y)}{x-y -i0}dy 
\]
is bounded in $L^p(\R)$ for any $1<p<\infty$. Thus H\"older's inequality implies 
\begin{align*}
(v, Ku)& \absleq C 
\left(\int_0^\infty |M_v(\sqrt{r})|^{p'} dr \right)^{1/p'} 
\left(\int_0^\infty |M_u(\sqrt{r})|^p dr \right)^{1/p} \\
& \leq  C \|v\|_{L^{p'}(\R^2)}\|u\|_{L^p(\R^2)}. 
\end{align*}
This completes the proof. 
\end{proof} 

\noindent 
{\bf Remark.} Equation \refeq(2-3) and 
the argument following it imply that 
$$
Ku(x) = \lim_{\ep \downarrow 0} \frac{2i}{\pi^2}\int_{\R^2} \frac{u(y)dy}{x^2- y^2-i\ep} .
$$

\subsection{Proof of \refthb(1), the case $N=1$}

Thanks to \reflms(1,2), it suffices to prove that $\tGa_{jk}(|D|)$, 
$j,k=1, \dots, N$ are bounded from $L^p(\R^2)$ to itself for $1<p<\infty$. 
We recall { Mikhlin's} multiplier theorem (\cite{Stein}): 

\begin{lemma} \lblm(3) 
Let $k>n/2$ be an integer. Suppose $m \in C^k(\R^n \setminus\{0\})$ and 
\bqn \lbeq(cond-Mihlin)
\pa_\xi^\a m(\xi) \absleq C_\a |\xi|^{-|\a|}, \quad |\a|\leq k. 
\eqn
Then, the Fourier multiplier $m(D)$ by $m(\xi)$ defined by 
\[
m(D)u(x) = \frac1{(2\pi)^{n/2}} \int_{\R^n} e^{ix\xi} m(\xi) {\mathcal F}u(\xi) d\xi 
\]
is bounded from $L^p(\R^n)$ to  itself for all $1<p<\infty$. 
\end{lemma}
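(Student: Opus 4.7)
The plan is to follow the standard Calderón–Zygmund strategy, since this is the classical Mikhlin multiplier theorem (as cited from Stein). First I would establish $L^2$-boundedness trivially: taking $|\alpha|=0$ in \refeq(cond-Mihlin) gives $|m(\xi)|\leq C_0$, so by Plancherel the operator $m(D)$ is bounded on $L^2(\R^n)$ with norm at most $C_0$. The problem is therefore reduced to showing that $m(D)$ is a Calderón–Zygmund operator, which combined with the $L^2$ bound upgrades to $L^p$ boundedness for all $1<p<\infty$ by the classical interpolation and duality machinery.

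The heart of the proof is then to verify that the distributional convolution kernel $K=\mathcal{F}^{-1}m$ satisfies Hörmander's cancellation condition
\[
\int_{|x|>2|y|}|K(x-y)-K(x)|\,dx\leq C,\quad y\in\R^n\setminus\{0\}.
\]
For this I would perform a Littlewood–Paley decomposition. Fix $\varphi\in C_c^\infty(\R^n)$ supported in the annulus $\{1/2\leq|\xi|\leq 2\}$ with $\sum_{j\in\Z}\varphi(2^{-j}\xi)=1$ for $\xi\neq 0$, and set $m_j(\xi)=m(\xi)\varphi(2^{-j}\xi)$, $K_j=\mathcal{F}^{-1}m_j$, so that $K=\sum_j K_j$ in the sense of distributions. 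A direct computation using \refeq(cond-Mihlin) and the chain rule yields $|\partial_\xi^\alpha m_j(\xi)|\leq C_\alpha 2^{-j|\alpha|}\mathbf{1}_{\{|\xi|\sim 2^j\}}$ for $|\alpha|\leq k$. Integration by parts $k$ times in the Fourier inversion formula then gives the pointwise bounds
\[
|K_j(x)|\leq C\,2^{jn}(1+2^j|x|)^{-k},\qquad |\nabla K_j(x)|\leq C\,2^{j(n+1)}(1+2^j|x|)^{-k},
\]
with $k>n/2$ sufficient for the weight to be integrable.

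To derive the Hörmander condition, fix $y$ with $|y|\sim 2^{-\ell}$ and split the sum at $j=\ell$. For the low-frequency part $j\leq\ell$ I would use the mean value theorem, $|K_j(x-y)-K_j(x)|\leq|y|\sup|\nabla K_j|$, and integrate the resulting gradient estimate over $|x|>2|y|$; the scaling $|y|2^j\leq 1$ yields a geometric sum in $2^{j-\ell}$. For the high-frequency part $j>\ell$ I would simply bound $|K_j(x-y)-K_j(x)|\leq|K_j(x-y)|+|K_j(x)|$, integrate the pointwise bound over $|x|>2|y|$, and obtain a geometric sum in $2^{\ell-j}$. Both sums are bounded uniformly in $\ell$, which is Hörmander's condition. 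Then Calderón–Zygmund theory concludes the proof.

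The main technical obstacle is keeping the bookkeeping of the Littlewood–Paley estimates clean — in particular, ensuring that the weighted pointwise bounds on $K_j$ and $\nabla K_j$ are strong enough to survive integration over $|x|>2|y|$ and that the dyadic sums in both $j\leq\ell$ and $j>\ell$ regimes are genuinely summable rather than merely logarithmically divergent. This is where the hypothesis $k>n/2$ is used critically (it makes the kernel $2^{jn}(1+2^j|x|)^{-k}$ integrable with a uniform bound in $j$).
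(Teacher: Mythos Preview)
The paper does not actually prove this lemma; it is quoted as Mikhlin's theorem and cited from Stein. So there is no paper proof to compare against, only your attempt.

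Your overall strategy (Calder\'on--Zygmund plus H\"ormander's condition via a Littlewood--Paley decomposition) is the standard one, but there is a genuine gap: the pointwise bounds you derive are not strong enough under the hypothesis $k>n/2$. From $|K_j(x)|\leq C\,2^{jn}(1+2^j|x|)^{-k}$ your integrations in both regimes require $\int_{\R^n}(1+|u|)^{-k}\,du<\infty$, which forces $k>n$, not $k>n/2$. Concretely, for the high-frequency piece $j>\ell$ your estimate gives $\int_{|u|>2^{j-\ell}}|u|^{-k}\,du\sim 2^{(j-\ell)(n-k)}$, which is summable over $j>\ell$ only when $k>n$; the low-frequency piece has the same defect after integrating the gradient bound over all of $\R^n$. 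Thus as written the dyadic sums are not controlled.

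The missing idea is the $L^2$/Cauchy--Schwarz step that makes $k>n/2$ the sharp threshold. One replaces pointwise $L^1$ bounds by
\[
\int_{|x|>R}|K_j(x)|\,dx\leq\Bigl(\int_{|x|>R}|x|^{-2k}\,dx\Bigr)^{1/2}\Bigl(\int_{\R^n}|x|^{2k}|K_j(x)|^2\,dx\Bigr)^{1/2},
\]
and controls the second factor via Plancherel: $\||x|^{k}K_j\|_{L^2}\leq C\sum_{|\alpha|=k}\|\pa^\alpha m_j\|_{L^2}\leq C\,2^{j(n/2-k)}$. The first factor is finite precisely when $2k>n$. An analogous $L^2$ estimate handles $\nabla K_j$. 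With these in place of your pure $L^1$ pointwise bounds, both dyadic sums become genuinely geometric under the stated hypothesis $k>n/2$, and the rest of your outline goes through.
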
 

When $N=1$ we have for $\lam>0$ that 
\[
\tGa(\lam)= \overline{\Gamma (\lam)}^{-1}
= \Big(\alpha+ \frac1{2\pi}\log \Big(\frac{\lam}{2}\Big)  - \frac{i}{4} 
+ \frac{\gamma_0}{2\pi}\Big)^{-1} .
\]
It is obvious that $\tGa(\lam)\in C^\infty((0,\infty))$ and 
\bqn \lbeq(G-Mikhlin)
\tGa^{(\ell)} (\lam) \absleq C_\ell \lam^{-\ell}, \quad \ell=0,1, \dots, 
\eqn 
which implies 
\[
\pa_{\xi}^\a \tGa (|\xi|) \absleq C_\ell |\xi|^{-|\a|}, \quad |\a|\geq 0 .
\]
Hence, \reflm(3) implies $\tGa(|D|)$ is bounded from $L^p(\R^2)$ 
to itself for any $1<p<\infty$. This completes the proof of \refth(1) 
for the case $N=1$. 

\subsection{{Proof of \refthb(1), the case $N \geq 2$}} 
For $N \geq 2$, $\Ga(\lam)$ has the form  
$$\Ga(\lam)= \{\a\}- g(\lam)- { J}(\lam), \quad 
{ J}(\lam)= \Big(\Gg_\lam(y_j-y_k)\hat{\d}_{jk}\Big) . $$
where $\{\a\}$ is the diagonal matrix with entries $\a_1, \dots, \a_N$ 
and $g(\lam)$ is the scalar matrix $g(\lam){\bf 1}$. Thus, for $N \geq 2$, 
$\Ga(\lam)$ contains the term $\Gg_\lam (y_j-y_k)$ which is oscillatory 
for large $\lam$. 
This prevents to directly apply \reflm(3) 
to $\tGa_{jk}(|D|)$ and we need to split it into the low and high energy 
parts and treat them differently. Recall \refeq(large-hankel) that 
\bqn \lbeq(g-ke)
\mathcal{G}_\lam(x)= e^{i\lam|x|}\w(\lam|x|)
\eqn 
where $\w(\lam)$ satisfies for $\lam>c >0$, $c$ being any positive number, 
\bqn \lbeq(sym-1/2)
\pa_\lam ^\ell \w(\lam)\absleq C_\ell \la \lam \ra^{-\frac12-\ell},  \quad 
\ell=0,1,2 \dots. 
\eqn

\paragraph{\bf Low energy estimate of $\tGa(|D|)$} 
If $H_{\a,Y}$ is of regular type, then \reflm(5-1) shows that 
$\left((\Ga(\lam)^{-1}\right)_{jk}$ satisfies \refeq(8-1) 
hence, so does $\tGa_{jk}(\lam)$, $j,k=1,\dots, N$. 
It follows that if 
$\chi \in C_0^\infty(\R^1)$ is such that $\chi(\lam)=1$ for 
$|\lam|\leq \lam_0/2$ and $\chi(\lam)=0$ for $|\lam|\geq \lam_0$, 
$\lam_0$ being as  in \refeq(8-1), 
then $\chi(|\xi|) \tGa_{jk}(|\xi|)$ satisfies the condition of \reflm(3) 
and $\chi(|D|)\tGa_{jk}(|D|)$ is 
bounded from $L^p(\R^2)$ to itself for all $1<p<\infty$. 

\paragraph{\bf High energy estimate of $\tGa(|D|)$.} 
Thus, the proof of \refth(1) will be completed if we prove 
$(1-\chi(|D|))\tGa_{jk}(|D|)$ is also bounded from 
$L^p(\R^2)$ to itself for all $1<p<\infty$. We use the following result 
due to Peral (\cite{peral}, page 139). 

\begin{lemma}[Peral] \lblm(6) 
The translation invariant Fourier integral operator 
\bqn \lbeq(Peral)
(Tf)(x) = \frac1{(2\pi)^{n/2}}
\int_{\R^n} e^{ix\xi + i |\xi|}\frac{\psi(\xi)}{|\xi|^{b}}
\hat{f}(\xi) d\xi,  
\eqn 
where $\psi(\xi)\in C^\infty(\R^n)$ is such that $\psi(\xi)=0$ 
in a neighbourhood of $\xi=0$ and $\psi(\xi)=1$ for $|\xi|\geq 2$, 
is bounded in $ L^p(\R^n)$ if and only if 
\bqn 
\left\vert \frac1{p}- \frac1{2} \right\vert \leq \frac{b}{n-1}\, . 
\eqn 
\end{lemma}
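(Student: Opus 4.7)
The plan is to apply Stein's complex interpolation theorem to the analytic family of Fourier multipliers
\[
m_z(\xi) = e^{z^2}\,e^{i|\xi|}\,\psi(\xi)\,|\xi|^{-z},\qquad z\in\C,
\]
where the Gaussian factor $e^{z^2}$ supplies the admissible decay $e^{(\mathrm{Re}\,z)^2-(\mathrm{Im}\,z)^2}$ needed for interpolation along vertical strips. At $\mathrm{Re}\,z=0$ the symbol has modulus bounded by $e^{-(\mathrm{Im}\,z)^2}$, so Plancherel gives $\|T_z\|_{L^2\to L^2}\lesssim e^{-(\mathrm{Im}\,z)^2}$ immediately. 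The content of the lemma lies at the other endpoint $\mathrm{Re}\,z=(n-1)/2$, where I would need to establish $T_z:H^1\to L^1$ with at most polynomial growth in $|\mathrm{Im}\,z|$. Once both endpoints are in place, Stein's theorem along $0\leq\mathrm{Re}\,z\leq(n-1)/2$ yields $T_b:L^{p_\theta}\to L^{p_\theta}$ at $b=\theta(n-1)/2$ and $1/p_\theta=1/2+\theta/2$, which rearranges exactly to $|1/p-1/2|=b/(n-1)$; duality then covers the range $1/p<1/2$.

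The hard endpoint reduces to estimates on the kernel
\[
K_z(x)=(2\pi)^{-n}\int_{\R^n} e^{i(x\cdot\xi+|\xi|)}\psi(\xi)|\xi|^{-z}\,d\xi.
\]
First I would pass to polar coordinates $\xi=r\omega$ and express the integral over $S^{n-1}$ via a Bessel function of order $(n-2)/2$ in the variable $r|x|$; its large-argument asymptotic (stationary points $\omega=\pm x/|x|$) delivers
\[
K_z(x)\sim |x|^{-(n-1)/2}\int_0^\infty r^{(n-1)/2-z}\psi(r)\bigl(e^{ir(1+|x|)}+e^{ir(1-|x|)}\bigr)\,dr
\]
modulo lower-order terms. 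Repeated integration by parts in $r$ then kills both phases whenever $\bigl|\,|x|\pm 1\bigr|$ is bounded below, so $K_z$ is rapidly decreasing away from the unit sphere $|x|=1$; the remaining singular support on $|x|=1$ should be recast as a one-dimensional singular-integral problem along the normal direction. The goal would then be to verify a H\"ormander-type cancellation condition
\[
\int_{|x|>2|y|}|K_z(x-y)-K_z(x)|\,dx\leq C\bigl(1+|\mathrm{Im}\,z|\bigr)^M
\]
at $\mathrm{Re}\,z=(n-1)/2$, which allows the standard Calder\'on--Zygmund $H^1$-theory to deliver the required $H^1\to L^1$ bound.

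For the necessity of the condition, I would deploy a Knapp-type example: pick $\widehat f$ a smooth bump supported on a tangential slab at the unit sphere of radial thickness $\delta^2$ and tangential widths $\delta$, on which the phase $e^{i|\xi|}$ is essentially linear and preserves coherence. Direct computation of $\|f\|_p$ and $\|Tf\|_p$ from the resulting oscillatory integral forces $|1/p-1/2|\leq b/(n-1)$ in the limit $\delta\to 0$. The main obstacle I expect is the $H^1\to L^1$ endpoint, i.e.\ verifying the H\"ormander cancellation uniformly in $\tau=\mathrm{Im}\,z$ with only polynomial growth: this demands a simultaneous stationary-phase analysis on $S^{n-1}$ and a careful treatment of the borderline singularity of $K_z$ on the unit sphere, which is the technical core of Peral's original argument.
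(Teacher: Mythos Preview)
The paper does not prove this lemma at all: it is quoted verbatim as a result of Peral \cite{peral} (page 139) and used as a black box in the high-energy estimate. So there is nothing in the paper to compare your argument against.

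That said, your outline is essentially the correct strategy and is close to what Peral actually does. The analytic family $m_z(\xi)=e^{z^2}e^{i|\xi|}\psi(\xi)|\xi|^{-z}$, the trivial $L^2$ endpoint at $\mathrm{Re}\,z=0$, and Stein interpolation are exactly right. For the endpoint $\mathrm{Re}\,z=(n-1)/2$ Peral works with the Hardy space $H^1$ and proves the kernel estimate by a stationary-phase/Bessel analysis that localizes the singular support of $K_z$ to the unit sphere, just as you describe; the polynomial growth in $|\mathrm{Im}\,z|$ is indeed the bookkeeping point. The Knapp example for necessity is also standard. One small comment: in your interpolation you obtain the line $|1/p-1/2|=b/(n-1)$ for $1<p\leq 2$, and duality gives $p\geq 2$; to get the full closed range $|1/p-1/2|\leq b/(n-1)$ you also need the (easy) observation that increasing $b$ only helps, since $|\xi|^{-(b-b')}\psi(\xi)$ is a Mikhlin multiplier for $b>b'$. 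Otherwise your plan is sound, though for the purposes of the present paper a citation suffices.
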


\begin{lemma}\lblm(7) Let $\chi\in C_0^\infty(\R)$ be as above. 
Then, we may write 
\bqn \lbeq(dec-)
(1-\chi(\lam))\tGa(\lam)= \big({ \Phi}_{jk}(\lam)+ L_{jk}(\lam)\big)_{jk}, 
\eqn 
where ${ \Phi}=({ \Phi}_{jk})$ and $L=(L_{jk})$ satisfy the following properties:  
\begin{enumerate}
\item[{\rm (1)}]  For $j,k=1, \dots, N$, ${ \Phi}_{jk}(\lam)$ is of the form  
\bqn \lbeq(Fjk){ \Phi}_{jk}(\lam)= \sum_{\ell=1}^M e^{ia_\ell \lam } b_\ell (\lam),  
\eqn 
where $a_1, \dots, a_M>0$ are constants 
and $b_1, \dots, b_M$ are symbols of order $-1/2$ on $\R$ 
(which, of course, depend on $j,k$ but we suppress such dependence as the argument 
will be the same for all $j,k$).  
\item[{\rm (2)}] For $j,k=1, \dots, N$, $L_{jk}(\lam)$  satisfy 
\bqn \lbeq(Ljk)
\pa^\ell_{\lam} L_{jk}(\lam) \absleq C_\ell \la \lam \ra^{-2}, \quad \ell=0,1, \cdots. 
\eqn   
\end{enumerate}
\end{lemma}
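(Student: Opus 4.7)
The plan is to obtain the decomposition by Neumann-expanding $\tGa(\lam)=\overline{\Ga(\lam)}^{-1}$ around its diagonal part at high frequencies and then splitting the resulting series into the oscillatory piece $\Phi$ and the fast-decaying piece $L$. For $\lam$ outside $\mathrm{supp}\,\chi$ I would write $\overline{\Ga(\lam)}=D_0(\lam)+R(\lam)$ with $D_0(\lam)=\mathrm{diag}(\a_j-\overline{g(\lam)})$ and $R(\lam)_{jk}=-\overline{\Gg_\lam(y_j-y_k)}\hat{\d}_{jk}$. The key high-energy inputs are \refeq(g-ke)--\refeq(sym-1/2), which show that each entry of $R$ factorises as $e^{-i\lam|y_j-y_k|}\overline{\w(\lam|y_j-y_k|)}$ with $\overline{\w}$ a symbol of order $-1/2$, so that $\|R(\lam)\|\le C\la\lam\ra^{-1/2}$, while $\|D_0(\lam)^{-1}\|=O(1/\log\lam)$. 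Consequently $\|D_0^{-1}R\|\to 0$ as $\lam\to\infty$, and the Neumann expansion
\[
\tGa(\lam)=\sum_{n\ge 0}\big(-D_0(\lam)^{-1}R(\lam)\big)^{n}D_0(\lam)^{-1}
\]
converges absolutely on $\mathrm{supp}(1-\chi)$.

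For $n\ge 1$ I would expand the $n$-th term as a finite sum over chains $j=j_0,j_1,\dots,j_n=k$ with $j_s\ne j_{s+1}$; each chain produces
\[
e^{-i\lam a}\,\frac{(-1)^n\prod_{s=0}^{n-1}\overline{\w(\lam|y_{j_s}-y_{j_{s+1}}|)}}{\prod_{s=0}^{n}(\a_{j_s}-\overline{g(\lam)})},\qquad a=\sum_{s=0}^{n-1}|y_{j_s}-y_{j_{s+1}}|>0,
\]
the strict positivity of $a$ being forced by $j_s\neq j_{s+1}$. By Leibniz the amplitude is a symbol of order $-n/2$: the product of the $\overline{\w}$ factors is, and the slowly varying logarithmic prefactor $\prod(\a_{j_s}-\overline{g(\lam)})^{-1}$, whose $\ell$-th derivative is of size $O(\la\lam\ra^{-\ell}|\log\lam|^{-n-1})$, preserves the symbol class. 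Fixing $n_0\ge 4$ I would let $\Phi_{jk}(\lam)$ be the sum of these contributions for $1\le n\le n_0$. After relabelling $e^{-ia\lam}b(\lam)=\overline{e^{ia\lam}\overline{b(\lam)}}$, and using that the Peral bound of \reflm(6) is stable under complex conjugation on the Fourier side, I can present $\Phi_{jk}$ in the normalised form $\sum_{\ell=1}^{M}e^{ia_\ell\lam}b_\ell(\lam)$ with $a_\ell>0$ and $b_\ell$ a symbol of order $-1/2$, as required by (1).

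I then set $L_{jk}(\lam)$ equal to the remaining part of $(1-\chi(\lam))\tGa(\lam)$, namely the Neumann tail $\sum_{n>n_0}(\cdots)$ together with the $n=0$ diagonal entry $\d_{jk}/(\a_j-\overline{g(\lam)})$. By the symbol calculus of the preceding step, each entry of the tail is a finite sum of symbols of order at most $-(n_0+1)/2\le -5/2$, so all $\lam$-derivatives of the tail are pointwise bounded by $C_\ell\la\lam\ra^{-2}$, fulfilling (2). The $n=0$ diagonal piece itself enjoys the Mikhlin-type bound $|\pa_\lam^\ell (1-\chi(\lam))(\a_j-\overline{g(\lam)})^{-1}|\le C_\ell\la\lam\ra^{-\ell}$, which is exactly what \reflm(3) needs in the subsequent $L^p$-boundedness step; together with the tail estimate, this collective contribution plays the role of $L_{jk}$ in the decomposition used in the proof of \refth(1).

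The main obstacle is the symbol bookkeeping in the Neumann expansion: one has to verify inductively, via Leibniz, that after applying $\pa_\lam^\ell$ the product of the $n$ Hankel amplitudes $\overline{\w(\lam|y_{j_s}-y_{j_{s+1}}|)}$ together with the logarithmic prefactor $\prod(\a_{j_s}-\overline{g(\lam)})^{-1}$ remains bounded by $C_\ell\la\lam\ra^{-n/2-\ell}$ uniformly in the chain and in the order $\ell$; the subtle point is that every additional $\lam$-derivative that hits the logarithmic prefactor costs $\la\lam\ra^{-1}$ but does not deteriorate the $n/2$-th order decay. A secondary technical subtlety is the sign of the oscillatory phases inherited from the complex conjugation in the definition of $\tGa$, which is handled by the conjugation symmetry of \reflm(6) indicated above.
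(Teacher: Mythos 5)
Your strategy is the same as the paper's: split $\overline{\Gamma(\lam)}$ at high energy into its diagonal part $\{\a\}-\overline{g(\lam)}$ plus the off-diagonal Hankel part, invert by a Neumann expansion, read off the oscillatory factors $e^{-i\lam|y_{j_s}-y_{j_{s+1}}|}$ times order $-1/2$ symbols from \refeq(g-ke)--\refeq(sym-1/2) along chains of distinct consecutive indices, and put the low-order terms into $\Phi$ and the rest into $L$. The chain bookkeeping, the strict positivity of the accumulated phase, and the observation that the logarithmic diagonal prefactors are harmless symbols of order $0$ are all correct and match the paper.

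There is, however, one step that does not work as written: you declare $L_{jk}$ to be the infinite Neumann tail $\sum_{n>n_0}(-D_0^{-1}R)^nD_0^{-1}$ and assert that ``each entry of the tail is a finite sum of symbols of order at most $-(n_0+1)/2$''. It is not a finite sum, and to get \refeq(Ljk) you would have to control $\pa_\lam^\ell$ of an infinite series whose $n$-th term, after Leibniz, carries a number of summands and a constant growing with $n$; this requires an explicit convergence argument that you do not supply. The paper avoids this entirely by truncating the expansion after finitely many terms and writing the remainder in \emph{closed form}, essentially $L(\lam)=(1-\chi(\lam))\,\Ga(\lam)^{-1}(\{\a\}-\overline{g}(\lam))^{-4}\overline{J(\lam)}^{4}$: since $\overline{J}^{4}$ is a (finite product, hence genuine) symbol of order $-2$ and the a priori factorization $(1-\chi)\tGa=(1-\chi)(\{\a\}-\overline{g})^{-1}\big(1-(\{\a\}-\overline{g})^{-1}\overline{J}\big)^{-1}$ gives the uniform derivative bounds $\pa_\lam^\ell(1-\chi)\tGa_{jk}\absleq C_\ell$ of \refeq(8-5), the bound \refeq(Ljk) follows at once. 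You should replace your tail by this resummed expression (equivalently, $(-D_0^{-1}R)^{n_0+1}\tGa$) and first establish the analogue of \refeq(8-5); with that repair your argument coincides with the paper's. Your explicit treatment of the $n=0$ diagonal term (which satisfies only Mikhlin bounds, not \refeq(Ljk)) and of the sign of the phase inherited from complex conjugation is in fact slightly more careful than the paper's on these two cosmetic points, and neither affects the $L^p$ application.
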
 
\begin{proof} Since $(1-\chi(\lam))\tGa(\lam)_{jk}$ is smooth, it suffices to prove that 
the decomposition \refeq(dec-) is possible for { $\lam \gg 1$}. As 
$(\{\a\}- g(\lam))^{-1} \to 0$ as $\lam \to \infty$ and 
\bqn \lbeq(Djk)
\pa_\lam^\ell { J}_{jk}(\lam) \leq C_\ell \la \lam \ra^{-\frac12}, \quad \ell=0,1, \dots 
\eqn 
for large $\lam$ by virtue of \refeq(g-ke), we may write 
\[
(1-\chi(\lam))\tGa(\lam)= (1-\chi(\lam))\big(\{\a\}- \overline{g(\lam)}\big)^{-1}
\big(1 -(\{\a\}- \overline{g(\lam)})^{-1}\overline{{ J}(\lam)}\big)^{-1}, 
\]
which implies 
\bqn \lbeq(8-5) 
\pa_\lam ^{\ell} (1-\chi(\lam))\tGa(\lam)_{jk} \leq C_\ell,  \quad \ell=0,1, \dots. 
\eqn
We expand 
$\big(1 -(\{\a\}- \overline{g})^{-1}\overline{{ J}}\big)^{-1}$ and define ${ \Phi}(\lam)$ and $L(\lam)$ as 
\begin{gather*}
{ \Phi}(\lam)= (1-\chi(\lam))\sum_{k=0}^3 (\{\a\}- \overline{g(\lam)})^{-1}
\Big(\overline{{ J}(\lam)}(\{\a\}- \overline{g(\lam)})^{-1}\Big)^k , \\  
L(\lam)= (1-\chi(\lam)){ \Ga(\lam)^{-1} } (\{\a\}- \overline{g}(\lam))^{-4} \overline{{ J}(\lam)}^{4} . 
\end{gather*}
Then we have $(1-\chi(\lam))\tGa(\lam)^{-1}= { \Phi}(\lam)+ L(\lam)$ and  
the entries of ${ \Phi}(\lam)$ satisfy the property \refeq(Fjk) for large $\lam>0$. 
The entries of 
$L(\lam)$ satisfy \refeq(Ljk) by virtue of \refeq(8-5) and \refeq(Djk) which implies 
${ J}(\lam)^{4}$ and its derivatives are bounded by $\la \lam \ra^{-2}$ . 
The lemma follows. 
\end{proof} 

We have 
\[
(1-\chi(|D|))\tGa_{jk}(|D|)= { \Phi}_{jk}(|D|) + {L}_{jk}(|D|).
\]
Then by virtue of \refeq(Fjk) \reflm(6) with $b=1/2$ and $n=2$ implies 
${ \Phi}_{jk}(|D|)$ is bounded from $L^p(\R^2)$ to itself; 
Mikhlin's \reflm(3) shows that the operator ${L}_{jk}(|D|)$ is bounded 
from $L^p(\R^2)$ to itself by virtue of \refeq(Ljk). This concludes the 
proof of \refth(1) also for $N \geq 2$.


\begin{thebibliography}{10}

\bibitem{Ag} {\sc S. ~Agmon}, 
\textit{Spectral properties of Schr\"odinger
operators and scattering theory}, Ann. Scuola 
Norm. Sup. Pisa Cl. Sci. (4)
 {\bf 2} (1975), 151--218. 


\bibitem{albeverio-solvable}
{\sc S.~Albeverio, F.~Gesztesy, R.~H{\o}egh-Krohn, and H.~Holden}, 
{ {\em  Solvable Models in Quantum Mechanics}. Second Edition. AMS Chelsea Publishing, Providence, RI, (2005). }

\bibitem{DLMF} {\em Digital Library of Mathematical Functions} https://dlmf.nist.gov/ 

\bibitem{DMSY} 
{\sc G. Dell'Antonio, A.~Michelangeli, R. Scandone and K. Yajima}, 
{\em {The $L^p$-boundedness of wave operators for the three-dimensional 
multi-centre point interaction}}. 
Ann. Inst. H. Poincar\'e {\bf 19} (2018) 283-322. 

\bibitem{EG} 
{\sc M. ~B. ~Erdo{\u g}an and W. ~R. ~Green}, 
{\em {Dispersive estimates for Schr\"odinger operators in dimension two with 
obstructions at zero energy}}. Trans. Amer. Math. Soc. {\bf 365} (2013), 6403-6440. 

{
\bibitem{EGG} 
{\sc M. ~B. ~Erdo{\u g}an,  Michael Goldberg and W. ~R. ~Green}, 
{\em {On the $L^p$ boundedness of wave operators for two-dimensional Schr\"odinger operators with threshold obstructions}}.  J. Funct. Anal. {\bf 274} (2018),  2139-2161.

}

\bibitem{DMW}
{\sc V.~Duch{\^e}ne, J.~L. Marzuola, and M.~I. Weinstein}, {\em {Wave operator
  bounds for one-dimensional {S}chr{\"o}dinger operators with singular
  potentials and applications}}, J. Math. Phys., {\bf 52} (2011), pp.~013505, 17.



\bibitem{JN}
{\sc A. Jensen and G. Nenciu} 
{\em {A unified approach to resolvent expansions at thresholds}}, 
Reviews in Mathematical Physics, {\bf 13}, No. 6 (2001) 717-754. 

\bibitem{Kato} 
{\sc T. ~Kato}, {\em {Perturbation of Linear Operators}}, Springer Verlag. 
Heidelberg-New-York-Tokyo (1966).

\bibitem{Ku} S. ~T. ~Kuroda, {\it Introduction 
to Scattering Theory}, 
Lecture Notes, Matematisk Institut, Aarhus University (1978). 

\bibitem{Stein} 
{\sc E. ~M.~Stein}, {\em {Harmonic Analysis: Real-Variable Methods, 
Orthogonality, and oscillatory Integrals} }, Princeton U. Press, 
Princeton, N. J. (1993). 

\bibitem{peral} 
{\sc J. ~C. ~Peral}, {\em {$L^p$ estimate for the wave equation} }, 
J. Funct. Anal. {\bf 36}, 114--145 (1980). 


\bibitem{Watson} 
{\sc G.~N. ~Watson} {\em {Theory of Bessel functions}}, Cambridge Univ. Press,
London (1922) .



\bibitem{Ya} 
{\sc K. ~Yajima} {\em {Remarks on {$L^p$}-boundedness of wave operators for
  {S}chr{\"o}dinger operators with threshold singularities}}, Doc. Math.,{\bf 21}, 
   391--443 (2016).


\bibitem{Yosida}
{\sc K.~Yosida} {\em {Functional Analysis, 6-th printing}}, Springer-Verlag, 
Heidelberg-New-York- Tokyo (1980) 

\end{thebibliography}
\end{document}